\newtheorem{theorem}{Theorem}[section]
\newtheorem{lemma}[theorem]{Lemma}
\newtheorem{proposition}[theorem]{Proposition}
\newtheorem{coro}[theorem]{Corollary}
\newtheorem{OpenProblem}[theorem]{Open Problem}
\newtheorem*{remark}{Remark}
\DeclareMathOperator\erf{erf}
\DeclareMathOperator{\sgn}{sgn}
\newcommand{\ER}{Erd\H{o}s-R\'{e}nyi }
\newcommand{\pr}{\mathbb{P}}
\newcommand{\E}{\mathbb{E}}
\newcommand{\R}{\mathbb{R}}
\newcommand{\Z}{\mathbb{Z}}
\newcommand{\G}{\mathbb{G}}
\newcommand{\T}{\mathbb{T}}
\newcommand{\distr}{\stackrel{d}{=}}
\newcommand{\Pois}{\ensuremath{\operatorname{Pois}}\xspace}
\newcommand{\Gncn}{\G(n,\lfloor cn\rfloor)}
\newcommand{\ignore}[1]{\relax}
\begin{document}

\title{\Large On the Max-Cut of Sparse Random Graphs
\thanks{  }}

\author{
{\sf David Gamarnik}\thanks{MIT; e-mail: {\tt gamarnik@mit.edu}.Research supported  by the NSF grants CMMI-1335155.}
\and
{\sf Quan Li}\thanks{MIT; e-mail: {\tt quanli@mit.edu}}
}

\date{}

\maketitle

\begin{abstract}
We consider the problem of estimating the size of a maximum cut (Max-Cut problem) in a random \ER graph on $n$ nodes and $\lfloor cn \rfloor$ edges. It is shown in Coppersmith et al. ~\cite{Coppersmith2004} that the size of the maximum cut in this graph normalized by the number of nodes belongs to the asymptotic region $[c/2+0.37613\sqrt{c},c/2+0.58870\sqrt{c}]$ with high probability (w.h.p.) as $n$ increases, for all sufficiently large $c$. The upper bound was obtained by application of the first moment method, and the lower bound was obtained by constructing algorithmically a cut which achieves the stated lower bound.

In this paper we improve both upper and lower bounds by introducing a novel bounding technique. Specifically, we establish that the size of the maximum cut normalized by the number of nodes belongs to the interval  $[c/2+0.47523\sqrt{c},c/2+0.55909\sqrt{c}]$ w.h.p. as $n$ increases, for all sufficiently large $c$. Instead of considering the expected number of cuts achieving a particular value as is done in the application of the first moment method, we observe that every maximum size cut satisfies a certain local optimality property, and  we compute the expected number of cuts with a given value satisfying this local optimality property. Estimating this expectation amounts to solving a rather involved two dimensional large deviations problem. We solve this underlying large deviation problem asymptotically as $c$ increases and use it to obtain an improved upper bound on the Max-Cut value. The lower bound is obtained by application of the second moment method, coupled with the same local optimality constraint,
and is shown to work up to the stated lower bound value $c/2+0.47523\sqrt{c}$. It is worth noting that both bounds are stronger than the ones obtained by standard first and second moment methods. 

Finally, we also obtain an improved lower bound of $1.36000n$ on the Max-Cut for the random cubic graph or any cubic graph with large girth, improving the previous
best bound of $1.33773n$.
\end{abstract}

\section{Introduction and Main Results}\label{section:Intro}
\subsection{Context and previous results}
The Max-Cut (finding the maximum cut) of a graph is the problem of splitting the nodes of a graph into two parts so as to maximize the number of edges between the two parts.
In the worst case the problem falls into the Max-SNP-hard complexity class which means that the optimal value cannot be approximated within a certain multiplicative error by a
polynomial time algorithm, unless P=NP. In this paper, however, we are concerned with the average case analysis of the Max-Cut problem.
Last decade we have seen a dramatic progress
improving our understanding of various randomly generated constraint satisfaction models such as the random K-SAT problem, the random XOR-SAT problem, proper coloring of a random graph,
independence ratio of a random graph, and many related problems~\cite{ding2013satisfiability}, \cite{coja2013upper}, \cite{coja2014asymptotic}.
These problems broadly fall into the class of so-called anti-ferromagnetic spin glass models, borrowing a terminology from statistical physics.
A great deal of progress was also achieved in studying ferromagnetic counterparts of these problems on random and more general
locally tree-like graphs~\cite{DemboMontanariIsing},\cite{DemboMontanariSurvey},\cite{DemboMontanariSun}.
At the same time the best known results for the Max-Cut problem (which falls into anti-ferromagnetic category)  were obtained in~\cite{Coppersmith2004} about a decade ago,
and have not been improved
ever since. In the aforementioned reference, upper and lower bounds are obtained on the Max-Cut value for the sparse random \ER graph.
Related results concerning the Max-K-SAT problem were considered in~\cite{achlioptas2005rigorous} and~\cite{achlioptas2007maximum}.
In this paper we improve both upper and lower bounds on the Max-Cut value
obtained in~\cite{Coppersmith2004}, using a new method based on applying local optimality property of maximum cuts
and solving an underlying two-dimensional large deviations problem.

Recall that an \ER graph $\G(n,m)$ is a random graph generated by selecting $m$ edges uniformly at random (without replacement) from all possible edges on $n$ vertices. The Max-Cut problem exhibits a phase transition at $2m/n=1$. Specifically, Coppersmith et al.~\cite{Coppersmith2004}  showed that the difference of $m$ and the MaxCut size jumps from $\Theta(1)$ to $\Theta(n)$ as $2m/n$ increases from below to above $1$. Furthermore, Daud\'{e}, Mart\'{i}nez, et al. \cite{daude2012max} established the distributional limit
of Max-Cut size in the scaling window $2m-n \ll n$.

Let $m = \lfloor cn \rfloor$ for some constant $c$. When $c$ is sufficiently large, which is the setting considered in this paper, both upper and lower bounds of the Max-Cut size are also obtained in~\cite{Coppersmith2004}.
To describe their result, let $MC_{n,c}$ denote the Max-Cut value in the \ER graph $\G(n,\lfloor cn \rfloor)$. Then there exists $\mathcal{MC}(c)$ such that
\begin{align}\label{eq:MaxCutLimit}
{MC_{n,c}\over n}\rightarrow \mathcal{MC}(c)
\end{align}
in probability as $n\rightarrow\infty$.
The existence of this limit is by no means obvious and itself was only recently established in~\cite{BayatiGamarnikTetali}. The fact that the actual value concentrates
around $\mathcal{MC}(c)$ with high probability follows directly by application of the Azuma's inequality.
In terms of $\mathcal{MC}(c)$, it was shown in~\cite{Coppersmith2004} that $\mathcal{MC}(c)\in [c/2+0.37613\sqrt{c}+o_c(\sqrt{c}),c/2+0.58870\sqrt{c}+o_c(\sqrt{c})]$, where
$o_c(\sqrt{c})$ denotes a function $f(c)$ satisfying $\lim_{c\rightarrow\infty}f(c)/\sqrt{c}=0$. From here on we use standard notations $o(\cdot),O(\cdot)$ and $\Theta(\cdot)$
with respect to $n\rightarrow\infty$.
When these order of magnitude notations are with respect to the regime $c\rightarrow\infty$, we use subscripts $o_c,O_c,\Theta_c$.
The upper bound was obtained by using a standard first moment method. Namely,
one computes the expected number of cuts achieving a certain cut size value. It was shown that when the size is at least $c/2+0.58870\sqrt{c}+o(\sqrt{c})$ the expectation
converges to zero exponentially fast, and thus the cuts of this size do not exist w.h.p. For the lower bound the authors constructed an algorithm where the nodes
were dynamically assigned to different parts of the cut based on the majority of the implied degrees. Since the degree of a node has approximately a Poisson distribution
with parameter $2c$, which for large $c$ is approximated by a Normal distribution with mean $2c$ and standard deviation $\sqrt{2c}$, the maximum of two such random
variables is approximately a maximum of two normally distributed random variables with the same distribution and has mean of order $\sqrt{c}$. This approach leads to a lower
bound $c/2+0.37613\sqrt{c}+o(\sqrt{c})$.  Coja-Oghlan and Moore \cite{coja2003max} generalized the similar ideas to Max $k$-Cut problem and proposed an approximation algorithm by using semidefinite relaxations of Max $k$-Cut. The approximation of a Poisson distribution by a Normal distribution when parameter of the Poisson distribution is large is also
instrumental in the analysis used in our paper.

\subsection{Our contribution}\label{subsection:Our Contribution}
In this paper we obtained improved upper and lower bounds on the Max-Cut value in \ER graph when the edge density $c$ diverges to infinity.
We now state our results precisely. Our bounds will be expressed in terms of solutions
to somewhat complicated equations which we introduce now.

We begin with equations involved in the upper bound on the value of the Max-Cut.
Consider the following system of two equations in variables $x$ and $\theta$
\begin{align}
\label{transce1}
-2x^2 + \theta^2 + \log \left( 1+ \erf (2x+\theta)  \right)&=0, \\
\label{transce2}
\theta + \sqrt{\frac{1}{\pi}} \frac{e^{-(2x+\theta)^2}}{1+\erf(2x+\theta)}&=0,
\end{align}
where $\erf(\cdot)$ is the Gaussian error function, defined as
\begin{align}
\label{error_func}
\erf(x)=\frac{2}{\sqrt{\pi}} \int_0^x \exp(-t^2) \, dt.
\end{align}
In particular, $\erf(x/\sqrt{2})=\sgn(x)\mathbb{P}[|Z|\leq |x|]$ when $Z$ is the standard normal random variable. We denote by $w_1(\theta,x)$ and $w_2(\theta,x)$
the functions appearing on the left-hand of (\ref{transce1}) and (\ref{transce2}) respectively.

\begin{lemma}\label{lemma:UniqueSolution}
For every $x$ in the range
\begin{align}\label{eq:xupperrange}
x\in [0.37613, 0.58870]
\end{align}
the equation $w_2(x,\theta)=0$ in $\theta$ has a unique solution. Furthermore,
the system (\ref{transce1}) and  (\ref{transce2}) has a unique solution in the same region (\ref{eq:xupperrange}).
Numerically,  this unique solution is $x_u=0.55909..$ and $\theta_u=-0.11079..~$.
\end{lemma}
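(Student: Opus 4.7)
My plan rests on a structural identity that collapses the two equations into a single-variable problem. Differentiating \eqref{transce1} with respect to $\theta$ gives $\partial w_1/\partial \theta = 2 w_2$, so a pair $(\theta, x)$ solves the system iff $\theta$ is a critical point of $\theta \mapsto w_1(\theta, x)$ that additionally lies on the zero set of $w_1$. Accordingly, I will first prove that for each $x$ in the range \eqref{eq:xupperrange} the equation $w_2(\theta, x)=0$ has a unique solution $\theta=\theta(x)$, then reduce the system to finding zeros of $W(x):=w_1(\theta(x),x)$.

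For the first claim, set $u=2x+\theta$ and $f(u):=e^{-u^2}/[\sqrt{\pi}(1+\erf(u))]$, so $w_2=\theta+f(u)$. A direct computation gives $f'(u)=-2f(u)(u+f(u))$, hence
\[
\frac{\partial w_2}{\partial \theta} \;=\; 1 - 2f(u)\bigl(u+f(u)\bigr).
\]
Reparametrizing by $v=u\sqrt{2}$ identifies $f(u)=h(v)/\sqrt{2}$ with the lower-tail inverse Mills ratio $h(v)=\phi(v)/\Phi(v)$ of the standard normal, and one checks $2f(u)(u+f(u))=h(v)(v+h(v))$. By the truncated-normal variance formula this equals $1-\operatorname{Var}[Z\mid Z\leq v]$ with $Z\sim N(0,1)$, which is strictly less than $1$. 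Hence $\partial w_2/\partial\theta>0$ throughout. Combined with $w_2(0,x)=f(2x)>0$ and the asymptotic $f(u)\sim -u$ as $u\to-\infty$ (so $w_2(\theta,x)\to -2x<0$), the intermediate value theorem produces a unique $\theta(x)\in(-\infty,0)$; it is in fact the global minimizer of $w_1(\cdot,x)$.

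For the system, the envelope identity $\partial w_1/\partial\theta=2w_2=0$ at $\theta=\theta(x)$ yields
\[
W'(x) \;=\; \left.\frac{\partial w_1}{\partial x}\right|_{\theta=\theta(x)} \;=\; -4x + 4f(u(x)) \;=\; -4\bigl(x+\theta(x)\bigr),
\]
where $u(x)=2x+\theta(x)$ and $f(u(x))=-\theta(x)$ from $w_2=0$. The same relation gives $u(x)+f(u(x))=2x$, so $x+\theta(x)=u(x)-x$ and hence $W'(x)<0$ iff $u(x)>x$. Letting $u^*$ be the unique positive fixed point of $f$ (numerically $u^*\approx 0.35$), one has $f(u)<u$ precisely for $u>u^*$; and because $u\mapsto u+f(u)$ is strictly increasing (same monotonicity as above), $u(x)>u^*$ iff $x>u^*$. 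A one-time numerical check confirms $u^*<0.37613$, so $W$ is strictly decreasing on $[0.37613,0.58870]$. Evaluating $W$ at the two endpoints shows $W(0.37613)>0$ and $W(0.58870)<0$, yielding a unique interior zero $x_u$ with $\theta_u=\theta(x_u)$; Newton's method returns $x_u\approx 0.55909$ and $\theta_u\approx -0.11079$ to the stated precision.

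The main obstacle I anticipate is the analytic monotonicity $\partial w_2/\partial\theta>0$: it hinges on recognizing $f$ as a rescaled inverse Mills ratio and invoking positivity of the truncated-normal variance, without which the uniqueness of $\theta(x)$ is not apparent from a direct attack. Once this is in hand, the identity $\partial w_1/\partial\theta=2w_2$ makes the envelope-theorem analysis of $W$ essentially automatic, and only the quantitative inputs $u^*<0.37613$ together with the sign of $W$ at the two endpoints remain, both of which are straightforward numerical verifications.
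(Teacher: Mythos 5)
Your proof is correct, and its skeleton coincides with the paper's: both arguments rest on the envelope identity $\dot w(x)=-4(x+\theta(x))$, on showing $x+\theta(x)>0$ throughout the range \eqref{eq:xupperrange}, and on the numerically verified sign change of $w$ at the two endpoints, which together give a unique zero at $x_u=0.55909..$, $\theta_u=-0.11079..$. The one genuine difference is where uniqueness of $\theta(x)$ comes from: the paper imports it from Lemma~\ref{LPD_lemma}, where it is a by-product of the strict convexity of the two-dimensional conditioned log-MGF arising in the large-deviations computation, whereas you prove it directly by the one-dimensional identity $\partial w_2/\partial\theta=1-h(v)\bigl(v+h(v)\bigr)=\operatorname{Var}[Z\mid Z\le v]>0$ with $v=\sqrt{2}(2x+\theta)$ and $h$ the inverse Mills ratio; this is self-contained, works for all $x>0$, and also yields differentiability of $\theta(x)$ via the implicit function theorem, which the paper only asserts. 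Similarly, for $x+\theta(x)>0$ the paper checks numerically that $g(0)=x-f(x)>0$ (the check $g(x)<0$ is in fact automatic) and invokes uniqueness of the root of $w_2$, while you route the same fact through the fixed point $u^*$ of $f$ and the single numerical input $u^*<0.37613$; these are equivalent, since $x-f(x)>0$ is exactly $x>u^*$, and the monotonicity of $u\mapsto u+f(u)$ you use is the same variance positivity as above. Net effect: your version is marginally more elementary and self-contained, the paper's costs nothing extra given the machinery it develops anyway, and the computer-assisted inputs (the endpoint values of $w$ and one Mills-ratio comparison) are essentially identical in both.
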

The proof of this lemma is given at the end of Section \ref{section:UpperBound}. The interval $[0.37613, 0.58870]$ appearing above is the upper and lower bound values derived in~\cite{Coppersmith2004}. We use it as a convenient
guarantee that the ``true'' value of $x$ has to belong to this range. We denote by $w(x)$ the univariate function $w_1(x,\theta(x))$, where
$\theta(x)$ is the unique solution of $w_2(x,\theta)=0$:
\begin{align}\label{eq:wx}
w(x)=w_1(x,\theta(x)).
\end{align}
We now introduce equations involved in deriving the lower bound on the Max-Cut value. Given $x$ and $\beta\in(0,1/2)$,
consider the following system of three equations in variables $t$, $\theta_1$ and $\theta_2$
\begin{align}
\label{partF_the1}
 &\theta_1 Q \left(\theta_1,\sqrt{\frac{1/2-\beta}{\beta}},\frac{t}{\beta^{3/2}}  \right)+\int_0^{\infty}
 \exp \left(-{z^2\over 2}+{1\over 2}\left(\sqrt{\frac{1/2-\beta}{\beta}}z-\theta_1-\frac{t}{\beta^{3/2}} \right)^2  \right) dz=0,  \\
\label{partF_the2}
 &\theta_2 Q \left(\theta_2,\sqrt{\frac{\beta}{1/2-\beta}},\frac{x-t}{(1/2-\beta)^{3/2}}  \right)+\int_0^{\infty}
 \exp \left(-{z^2\over 2}+{1\over 2}\left(\sqrt{\frac{\beta}{1/2-\beta}}z-\theta_2-\frac{x-t}{(1/2-\beta)^{3/2}} \right)^2  \right) dz=0, \\
\label{partF_t}
&-\frac{t}{\beta^2}+\frac{x-t}{(1/2-\beta)^2}-2\frac{\theta_1}{\beta^{1/2}}+2\frac{\theta_2}{(1/2-\beta)^{1/2}}=0,
\end{align}
where
\begin{align}
\label{Q_exp}
Q(\theta,a_1,a_2)=\int_{0}^{\infty} \int_{a_1 z_2}^{\infty} \exp(-((z_1-\theta-a_2)^2+z_2^2)/2) dz_1 dz_2.
\end{align}
\begin{lemma}\label{lemma:UniqueSolution2}
For every $x$ satisfying (\ref{eq:xupperrange}) and  $\beta \in (0,1/2)$, the system (\ref{partF_the1}), (\ref{partF_the2}) and (\ref{partF_t}) has a unique solution.
\end{lemma}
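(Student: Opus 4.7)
The plan is to reduce the three-variable system to a one-variable equation in $t$ by first solving (\ref{partF_the1}) for $\theta_1$ in terms of $t$, solving (\ref{partF_the2}) for $\theta_2$ in terms of $t$, and then substituting the resulting functions $\theta_1(t),\theta_2(t)$ into (\ref{partF_t}) to obtain a scalar equation $g(t)=0$. This works because (\ref{partF_the1}) involves only $\theta_1$ and $t$, while (\ref{partF_the2}) involves only $\theta_2$ and $t$, so the three equations naturally decouple in a block-triangular fashion analogous to how (\ref{transce1})--(\ref{transce2}) were handled in Lemma~\ref{lemma:UniqueSolution}.

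First I would fix $t$ (with $x,\beta$ in the stated ranges) and study the left-hand side of (\ref{partF_the1}) as a function $F_1(\theta_1)=\theta_1 Q(\theta_1,a,b)+J(\theta_1)$, where $a=\sqrt{(1/2-\beta)/\beta}$, $b=t/\beta^{3/2}$, and $J$ denotes the second integral. A direct computation, using $\partial_\theta Q(\theta,a,b)=\int_0^\infty \exp(-((az_2-\theta-b)^2+z_2^2)/2)\,dz_2>0$ together with an analogous formula for $J'(\theta_1)$ obtained by integration by parts in the defining integral of $J$, shows that $F_1'(\theta_1)>0$ uniformly in $\theta_1$. Combined with the limiting behaviour $F_1(\theta_1)\to -0$ as $\theta_1\to-\infty$ (since $Q$ decays like a Gaussian tail while $J\to 0^+$) and $F_1(\theta_1)\to+\infty$ as $\theta_1\to+\infty$ (since $Q\to\pi$ and $\theta_1 Q$ dominates), the intermediate value theorem then gives a unique root $\theta_1=\theta_1(t)$. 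The same argument, with $\beta$ and $1/2-\beta$ swapped and $b$ replaced by $(x-t)/(1/2-\beta)^{3/2}$, produces a unique $\theta_2=\theta_2(t)$ solving (\ref{partF_the2}).

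Next, since $F_1,F_2$ are smooth in both arguments and $\partial_{\theta_i}F_i\neq 0$, the implicit function theorem yields smooth maps $t\mapsto\theta_1(t),\theta_2(t)$, and $d\theta_i/dt$ can be expressed via
\begin{equation*}
\frac{d\theta_1}{dt}=-\frac{\partial_t F_1}{\partial_{\theta_1}F_1},\qquad \frac{d\theta_2}{dt}=-\frac{\partial_t F_2}{\partial_{\theta_2}F_2}.
\end{equation*}
Substituting into (\ref{partF_t}) gives the scalar equation
\begin{equation*}
g(t):=-\frac{t}{\beta^2}+\frac{x-t}{(1/2-\beta)^2}-\frac{2\theta_1(t)}{\sqrt{\beta}}+\frac{2\theta_2(t)}{\sqrt{1/2-\beta}}=0.
\end{equation*}
The first two terms in $g$ have derivative $-1/\beta^2-1/(1/2-\beta)^2<0$. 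The remaining task is to show that the contributions of $-2\theta_1'(t)/\sqrt{\beta}$ and $2\theta_2'(t)/\sqrt{1/2-\beta}$ preserve the sign of $g'(t)$, so that $g$ is strictly monotone; together with $g(t)\to\pm\infty$ as $t$ approaches the endpoints of its natural range, this will yield existence and uniqueness of the root.

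The main obstacle will be the last step: verifying monotonicity of $g$ boils down to bounding $|\theta_i'(t)|$ in terms of the quantities appearing in the first two (explicit) terms of $g$, which requires a careful computation of $\partial_t F_i$ and $\partial_{\theta_i} F_i$ via integration by parts inside $Q$ and $J$. A cleaner route, which I would also attempt in parallel, is to recognize the system (\ref{partF_the1})--(\ref{partF_t}) as the stationarity conditions of a saddle-point problem: an inner concave maximization in $\theta_1,\theta_2$ (corresponding to tilting Gaussian integrals of the form arising in the Poisson large-deviations calculation underlying (\ref{eq:MaxCutLimit})) followed by an outer strictly convex minimization in $t$. If this convex-concave structure can be made explicit, uniqueness follows from standard convex duality without any direct monotonicity verification, and the monotonicity of $g$ becomes a corollary of strict convexity of the reduced objective $t\mapsto \inf_{\theta_1,\theta_2}(\cdots)$.
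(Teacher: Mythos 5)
Your first reduction step is essentially sound, but the way you justify it is not: the left-hand side of (\ref{partF_the1}) is $Q(\theta_1,a_1,a_2)\cdot\partial_{\theta_1}\log P(\theta_1,a_1,a_2)$ with $P$ as in (\ref{eq:Pfunction}) and $Q$ as in (\ref{Q_exp}), and while $\partial_{\theta_1}\log P$ is strictly increasing in $\theta_1$ (it is the derivative of a log-moment-generating function of a conditioned Gaussian, cf. (\ref{unique_theta1})), the product with $Q$ need not be, so your claim that $F_1'(\theta_1)>0$ ``uniformly'' is unjustified; uniqueness of the root in $\theta_1$ should instead be read off from strict convexity of $\log P$ in $\theta_1$ together with positivity of $Q$. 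More importantly, a root exists only when the third argument is positive: as $\theta_1\to-\infty$, $\partial_{\theta_1}\log P\to -t/\beta^{3/2}$, so (\ref{partF_the1}) has no solution for $t<0$ and (\ref{partF_the2}) none for $t>x$. Thus your maps $\theta_1(t),\theta_2(t)$ and the scalar equation $g(t)=0$ only live on $(0,x)$; this is exactly the boundary fact the paper records as $G(-\epsilon)=G(x+\epsilon)=-\infty$ in Lemma~\ref{lemma:Uniquesaddlepoint}. (You also tacitly corrected a sign: as printed, the exponent $+\tfrac12(\cdot)^2$ in (\ref{partF_the1})--(\ref{partF_the2}) makes the integral diverge for $\beta<1/4$; the intended integrand, cf. (\ref{unique_theta1}), carries $-\tfrac12(\cdot)^2$.)

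The genuine gap is the step you flagged yourself and did not carry out. By the envelope (Danskin) argument, your $g(t)$ is precisely $G'(t)$ for $G(t)=\inf_{\theta_1,\theta_2}F(t,\theta_1,\theta_2)$ with $F$ as in (\ref{F_t_theta}), because at the minimizing $\theta$'s one has $\partial_{b_3}\log P=\partial_{\theta}\log Q=-\theta$; hence ``$g$ strictly monotone'' is the same assertion as ``$G$ strictly concave,'' and that is where all the difficulty of the lemma sits. It is not obtained by routine bounds on $|\theta_i'(t)|$: the paper proves concavity of $\log P(\theta,b_2,b_3)$ in its last argument $b_3$ via the Pr\'{e}kopa--Leindler inequality (Lemma~\ref{lemma:theta1b1}), combines it with strict convexity of $F$ in $(\theta_1,\theta_2)$ inherited from the MGF structure, and then uses Sion's minimax theorem plus a saddle-point-uniqueness argument (Lemma~\ref{lemma:Uniquesaddlepoint}) to conclude that the stationarity system (\ref{partF_the1}), (\ref{partF_the2}), (\ref{partF_t}) has a unique solution. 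Your ``cleaner route in parallel'' is exactly this argument, but in your write-up it remains conditional (``if this convex-concave structure can be made explicit''): neither the concavity in $t$ (equivalently in $b_3$), nor the strictness needed for uniqueness of the saddle point, nor the identification of stationary points with saddle points is established. As it stands, therefore, the proposal does not prove the lemma; to complete it you need (i) the Pr\'{e}kopa--Leindler concavity of $\log P$ in $b_3$, (ii) strict convexity in $(\theta_1,\theta_2)$, (iii) the boundary behavior confining $t$ to $[0,x]$, and (iv) the saddle-point uniqueness step.
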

This lemma follows from Lemma \ref{lemma:Uniquesaddlepoint} whose proof is given in Section \ref{section:SystemEquation}. Given $x$ and $\beta\in(0,1/2)$, denote the unique solution to (\ref{partF_the1}), (\ref{partF_the2}) and (\ref{partF_t}) by
$\theta_1^*(x,\beta)$, $\theta_2^*(x,\beta)$ and $t^*(x,\beta)$. We introduce the following functions
\begin{align}
P(\theta,a_1,a_2)= &\frac{1}{\pi} \exp(\theta^2/2)\int_{0}^{\infty} \int_{a_1 z_2}^{\infty} \exp(-((z_1-\theta-a_2)^2+z_2^2)/2) dz_1 dz_2,  \label{eq:Pfunction} \\
W(x,\beta)=&-2\beta\log\beta-2(1/2-\beta)\log(1/2-\beta) \notag\\
&-{1\over 2}{t^*(x,\beta)^2\over \beta^2}-{1\over 2}{(x-t^*(x,\beta))^2\over (1/2-\beta)^2}
+2\beta
\log P\left(\theta_1^*(x,\beta),\sqrt{1/2-\beta\over \beta},
{t^*(x,\beta)\over \beta^{3/2}}\right) \notag\\
&+2(1/2-\beta)
\log P\left(\theta_2^*(x,\beta),\sqrt{\beta\over 1/2-\beta},
{x-t^*(x,\beta)\over (1/2-\beta)^{3/2}}\right) , \label{eq:Wxbeta}
\end{align}
where the first function is defined for all $\theta,a_1,a_2\in\R$ and the second function is defined for all $x$ in the range (\ref{eq:xupperrange})
and $\beta\in(0,1/2)$.
Let
\begin{align}
\label{lower_bound_x}{}
x_l=\sup \left\{x \in [0.37613, x_u) :{} \sup_{\beta\in (0, 1/2)}W(x,\beta)=2w(x) \right \}.
\end{align}
The functions $\sup_{\beta \in (0, 1/2)} W(x, \beta)$ and 
$2\omega(x)$ for $x \in [0.44, 0.56]$ are given in Figure \ref{bifurcate}, which shows that there is a bifurcation between $\sup_{\beta \in (0, 1/2)} W(x, \beta)$ and $2\omega(x)$ within $x \in [0.44, 0.56]$. 
We find $x_l = 0.47523..$ in Section \ref{section:SystemEquation}, assuming the validity of a numerical search procedure on finding a solution to a set of nonlinear equations. The plots of $W(x, \beta)$ for $x = 0.47523$ and $x = 0.5$ are also given in Figures \ref{SMMM} and \ref{SMMM2}. 

\begin{figure}[!htb]
\vspace{0pc}
\centering
\includegraphics[width=5 in]{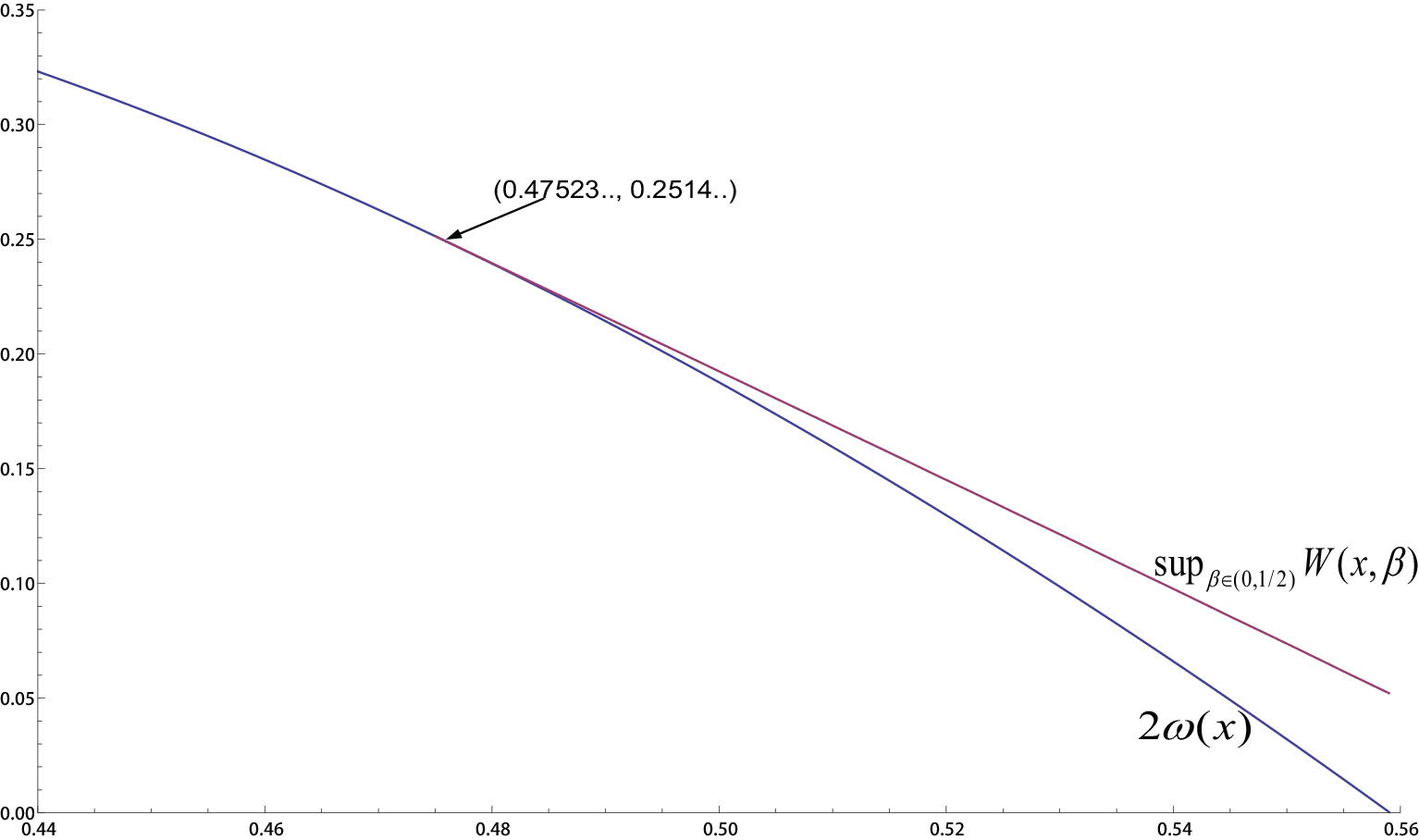}
\caption{$\sup_{\beta\in (0, 1/2)}W(x,\beta)$ and $2w(x)$ for $x \in [0.44, 0.56]$.}
\label{bifurcate}
\end{figure}

The values $x_u$ and $x_l$ give the new upper and lower bounds on the Max-Cut value as stated in the main result of this paper below.
\begin{theorem}\label{main_theorem}
Let $\mathcal{MC}(c)$ be defined as in (\ref{eq:MaxCutLimit}). Then
\begin{align}
\label{mc_Gc}
\mathcal{MC}(c) \in [c/2+x_l \sqrt{c}+o_c(\sqrt{c}), c/2+x_u \sqrt{c}+o_c(\sqrt{c})],
\end{align}
where $x_l$ is defined in (\ref{lower_bound_x}) and $x_u$ is defined as in Lemma~\ref{lemma:UniqueSolution}.
\end{theorem}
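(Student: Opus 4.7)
The plan is to strengthen the classical first and second moment methods by restricting attention to \emph{locally optimal} cuts. A cut $(S,\bar S)$ is locally optimal if flipping any single vertex's membership does not increase the cut size, equivalently if every vertex has a majority of its incident edges crossing the cut. Since every maximum cut is locally optimal, letting $Z_{n,k}$ denote the number of locally optimal cuts of size $k$ in $\G(n,\lfloor cn\rfloor)$, we have $MC_{n,c} = \max\{k : Z_{n,k} > 0\}$. Setting $k = \lfloor cn/2 + x\sqrt{c}\,n\rfloor$, the theorem reduces to sharp moment estimates on $Z_{n,k}$.

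For the upper bound I would fix $x > x_u$ and show that $\mathbb{E}[Z_{n,k}] \to 0$, which rules out such cuts by Markov's inequality. Writing the expectation as a sum over $S$ restricted to approximately balanced bipartitions (asymmetric cuts contribute strictly less at the exponential scale), the per-vertex edge counts across and within the cut, after conditioning on the global edge count, are approximately independent Gaussians for large $c$ via the Poisson-to-normal approximation. The event under study is the conjunction of one global linear constraint (cut size) and $n$ local linear constraints (per-vertex majority). Introducing a Lagrange multiplier $\theta$ for local optimality and exploiting vertex symmetry, the Gaussian large-deviation problem collapses into the two-dimensional variational system whose critical-point conditions are exactly \eqref{transce1}--\eqref{transce2}; the resulting exponential rate is $w(x)$. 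Lemma~\ref{lemma:UniqueSolution} identifies the saddle point, and since $w(x_u) = 0$ with $w$ strictly decreasing on a neighbourhood of $x_u$, one obtains $w(x) < 0$ for $x > x_u$ and the desired first moment decay.

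For the lower bound, fix $x < x_l$ and apply Paley--Zygmund,
\begin{equation*}
\mathbb{P}(Z_{n,k} > 0) \;\geq\; \frac{(\mathbb{E}[Z_{n,k}])^{2}}{\mathbb{E}[Z_{n,k}^{2}]}.
\end{equation*}
Expanding the second moment as a double sum over pairs $(S_1,S_2)$ of balanced bipartitions and organising by the overlap $\beta = |S_1 \cap S_2|/n \in (0,1/2)$, the vertex set splits into four cells of sizes $\beta n,(1/2-\beta)n,(1/2-\beta)n,\beta n$. One then performs a Gaussian large-deviation calculation with \emph{two} Lagrange multipliers $\theta_1,\theta_2$, one for each of the two symmetry classes of vertices distinguished by $S_1 \triangle S_2$, together with an additional internal parameter $t$ controlling the cut-mass contribution of one cell (the complementary cell contributing $x-t$). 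The stationarity conditions are exactly \eqref{partF_the1}--\eqref{partF_t} and the resulting rate is $W(x,\beta)$. By the very definition of $x_l$, for $x < x_l$ we have $\sup_\beta W(x,\beta) = 2w(x)$, attained at the independent-overlap value $\beta = 1/4$, so $\mathbb{E}[Z_{n,k}^2] \leq n^{O(1)}\,(\mathbb{E}[Z_{n,k}])^2$ and $\mathbb{P}(Z_{n,k} > 0) \geq c_0 > 0$. Finally, Azuma's inequality applied to the edge-exposure martingale (since $MC_{n,c}$ changes by at most one when a single edge is altered) concentrates $MC_{n,c}/n$ around its mean, which combined with the positive probability of $MC_{n,c} \geq k$ yields $MC_{n,c}/n \geq c/2 + x\sqrt{c} - o(1)$ w.h.p.; letting $x \to x_l^-$ gives the stated lower bound.

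The main obstacle is the second moment analysis. The first moment involves a one-variable saddle after trivial symmetry reductions, and the dominant subtlety is controlling the Poisson-to-Gaussian approximation error uniformly in the $n$ local-optimality events (this is the source of the $o_c(\sqrt{c})$ correction). The second moment instead requires tracking a full $4$-cell contingency table with coupled internal cut-mass and two distinct local-optimality constraints, yielding a genuinely multi-dimensional variational problem. The delicate point is verifying that $W(x,\beta)$ is maximized at the independent overlap $\beta = 1/4$ with value precisely $2w(x)$ throughout the nontrivial interval $[0.37613, x_l)$; this ultimately pins down $x_l = 0.47523\ldots$ as the first $x$ at which some $\beta \neq 1/4$ achieves $W(x,\beta) > 2w(x)$, i.e.\ the boundary past which the second moment method necessarily fails.
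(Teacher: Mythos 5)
Your overall plan coincides with the paper's: a first moment computation over locally optimal (nearly balanced) cuts giving the rate $w(x)$ and the upper bound via Markov plus the monotonicity of $w$ from Lemma~\ref{lemma:UniqueSolution}, and a second moment over pairs of balanced locally optimal cuts organised by the overlap parameter $\beta$, with the saddle-point system (\ref{partF_the1})--(\ref{partF_t}) and the definition (\ref{lower_bound_x}) of $x_l$, followed by a concentration argument. The one step that does not hold as you state it is the claim $\E[Z_{n,k}^2]\le n^{O(1)}(\E[Z_{n,k}])^2$ and hence $\pr(Z_{n,k}>0)\ge c_0>0$. The whole moment analysis is intrinsically asymptotic in $c$: the Poisson-to-Gaussian approximation and the identification of the rates $w(x)$ and $W(x,\beta)$ are accurate only up to $o_c(1)$ errors \emph{in the exponent}, i.e.\ up to multiplicative factors $\exp(o_c(1)\,n)$. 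Thus even for $x<x_l$, where $\sup_\beta W(x,\beta)=2w(x)$, what the method yields is $\E[Z_{n,k}^2]\le (\E[Z_{n,k}])^2\exp(o_c(1)\,n)$, and Paley--Zygmund gives only $\pr(Z_{n,k}>0)\ge \exp(-o_c(1)\,n)$, which is exponentially small in $n$ for each fixed large $c$, not bounded below by a constant.

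This is reparable, and the repair is exactly the paper's final step, which must be done quantitatively rather than by the loose ``positive probability plus concentration'' statement you give. Apply Azuma--Hoeffding to $MC_{n,c}$ (Lipschitz constant $1$ over $cn$ edge exposures) with deviation $t=\frac{\epsilon}{2}\sqrt{c}\,n$, so the failure probability is $2\exp(-\epsilon^2 n/8)$. For fixed $\epsilon>0$ and $c$ large enough the rate $o_c(1)$ drops below $\epsilon^2/8$, so $\pr(MC_{n,c}\ge zn)\ge\exp(-o_c(1)\,n)>2\exp(-\epsilon^2 n/8)$ forces $\E[MC_{n,c}]\ge zn-\frac{\epsilon}{2}\sqrt{c}\,n$; a second application of Azuma then gives $MC_{n,c}\ge zn-\epsilon\sqrt{c}\,n$ w.h.p., and letting $\epsilon\to 0$ after $c\to\infty$ produces precisely the $o_c(\sqrt{c})$ correction in (\ref{mc_Gc}). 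With this modification (and restricting the second moment to exactly balanced cuts, $\alpha=0$, as the paper does, which suffices for a lower bound), your argument matches the paper's proof; the upper bound half is already essentially identical to it.
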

{\remark After this paper was completed and posted on arxive.org, a follow-up paper by Dembo, Montanari and Sen \cite{Dembo2015} resolved our open problem \ref{openproblem1}, namely, closed the gap between the constants $x_l$ and $x_u$, using the interpolation technique on the Sherrington-Kirkpatrick model.}

This theorem can be easily extended to another variant of \ER random graph $\G(n, p = 2c/n)$, which is defined by putting every one of the $n(n-1)/2$ potential edges into the graph with
probability $2c/n$, independently for all edges.
Since the number of edges in $\G(n,p=2c/n)$ is tightly concentrated around $\lfloor cn \rfloor$ with fluctuation bounded by $O(n^{1/2+\epsilon})$ for any $\epsilon>0$ w.h.p., the Max-Cut size bounds derived from $\G(n, m = \lfloor cn\rfloor )$ also apply to $\G(n,p=2c/n)$.

Our result has immediate ramification to a very related problem of estimating the energy of a ground state of an anti-ferromagnetic Ising model at zero temperature.
Given an arbitrary undirected graph $G$ with node set $V$ and edge set $E$ and a real value $\beta$, the Ising model corresponds to a Gibbs distribution
on the state space $\{-1,1\}^{|V|}$ defined by
\begin{align*}
\pr(\sigma)=Z^{-1}\exp(-\beta\sum_{(u,v)\in E} \sigma_u\sigma_v),
\end{align*}
for every $\sigma=(\sigma_u, u\in V)\in \{-1,1\}^{|V|}$, where $Z=\sum_\sigma \exp(-\beta\sum_{(u,v)\in E} \sigma_u\sigma_v)$ is the normalizing partition function.
The case $\beta>0$ corresponds to the anti-ferromagnetic Ising model, and the ground state $\sigma^*$ is any state which minimizes the energy
functional $\sum_{(u,v)\in E} \sigma_u\sigma_v$, namely the one maximising the Gibbs likelihood. There is an obvious simple one-to-one relationship between
energy of ground states of an anti-ferromagnetic Ising model and Max-Cut problem. Denoting by $H(G)$ the energy of a ground state, we have
$H(G)=|E|-2MC(G)$, where $MC(G)$ denotes the Max-Cut value of the graph $G$. Denote by $\mathcal{I}(c)$ the limit of the ground state energy normalized
by $n$, as $n\rightarrow\infty$. The existence of this limit follows from the existence of the corresponding limit $\mathcal{MC}(c)$ for every $c$.
As an immediate implication of Theorem~\ref{main_theorem}, since the number of edges in $\G(n, \lfloor cn \rfloor)$ is $\lfloor cn\rfloor$ we obtain
\begin{coro}\label{main_theorem_Ising}
The following bounds hold
\begin{align}
\label{mc_GcIsing}
\mathcal{I}(c) \in [-2x_u \sqrt{c}+o_c(\sqrt{c}), -2x_l \sqrt{c}+o_c(\sqrt{c})],
\end{align}
where $x_l$ and $x_u$ have the same values as in Theorem~\ref{main_theorem}.
\end{coro}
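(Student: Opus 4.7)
The plan is to derive Corollary~\ref{main_theorem_Ising} directly from Theorem~\ref{main_theorem} by exploiting the elementary identity $H(G)=|E|-2MC(G)$ that the paper has already stated. Since this identity does the real work, the ``proof'' is essentially a one-line algebraic manipulation, and my job is to verify the identity carefully and then take limits.

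First, I would verify the identity $H(G)=|E|-2MC(G)$ for an arbitrary finite undirected graph $G=(V,E)$. Fix any configuration $\sigma\in\{-1,1\}^{|V|}$ and split the edge set into $C(\sigma)=\{(u,v)\in E:\sigma_u\neq\sigma_v\}$ and $S(\sigma)=E\setminus C(\sigma)$. Then
\begin{align*}
\sum_{(u,v)\in E}\sigma_u\sigma_v
=|S(\sigma)|-|C(\sigma)|=|E|-2|C(\sigma)|.
\end{align*}
The ground state minimises the left-hand side, which corresponds to maximising $|C(\sigma)|$. Since the partition $(\{u:\sigma_u=1\},\{u:\sigma_u=-1\})$ ranges over all cuts as $\sigma$ ranges over $\{-1,1\}^{|V|}$, the maximum of $|C(\sigma)|$ equals $MC(G)$, so $H(G)=|E|-2MC(G)$ as claimed.

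Next I would apply this identity pointwise to $G=\G(n,\lfloor cn\rfloor)$, which has exactly $|E|=\lfloor cn\rfloor$ edges. Dividing by $n$ gives
\begin{align*}
{H(\G(n,\lfloor cn\rfloor))\over n}={\lfloor cn\rfloor\over n}-2\,{MC_{n,c}\over n}.
\end{align*}
The first term tends to $c$ deterministically, while the second term converges in probability to $2\mathcal{MC}(c)$ by (\ref{eq:MaxCutLimit}). This both confirms that the limit $\mathcal{I}(c)$ exists (inheriting existence from $\mathcal{MC}(c)$, as already noted in the excerpt) and shows $\mathcal{I}(c)=c-2\mathcal{MC}(c)$.

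Finally, I would substitute the two-sided bound on $\mathcal{MC}(c)$ provided by Theorem~\ref{main_theorem}. Multiplying the bound $\mathcal{MC}(c)\in[c/2+x_l\sqrt{c}+o_c(\sqrt{c}),\,c/2+x_u\sqrt{c}+o_c(\sqrt{c})]$ by $-2$ flips the interval orientation, and adding $c$ cancels the $c/2$ shifts, yielding
\begin{align*}
\mathcal{I}(c)=c-2\mathcal{MC}(c)\in\bigl[-2x_u\sqrt{c}+o_c(\sqrt{c}),\,-2x_l\sqrt{c}+o_c(\sqrt{c})\bigr],
\end{align*}
which is (\ref{mc_GcIsing}). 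There is no genuine obstacle here; the only point that requires even minor care is keeping track of the direction of the inequalities through the multiplication by $-2$, and bookkeeping the $o_c(\sqrt{c})$ error terms (which are unchanged in order under this affine transformation).
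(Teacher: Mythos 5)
Your proposal is correct and follows exactly the paper's route: the paper also deduces the corollary immediately from the identity $H(G)=|E|-2MC(G)$ together with the fact that $\G(n,\lfloor cn\rfloor)$ has $\lfloor cn\rfloor$ edges, then substitutes the bounds of Theorem~\ref{main_theorem}. Your only addition is to spell out the elementary verification of the identity and the sign bookkeeping, which the paper leaves implicit.
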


The main novel technique underlying the bounds presented in Theorem~\ref{main_theorem} is based on the local optimality property of the maximum cuts. Specifically, given an arbitrary
graph $G$ with a node set $V$ and edge set $E$, let $V_1,V_2$ be any node partition which maximizes $E(V_1,V_2)$, where $E(A,B)$ denotes the number of edges between disjoint node
sets $A\subset V$ and $B\subset V$. Namely, $V_1,V_2$ achieve the maximum cut value. For every $v\in V_1$, let $N_1(v)$ and $N_2(v)$ denote the neighbors of node $v$ in parts
$V_1$ and $V_2$ respectively. Optimality of $(V_1,V_2)$ implies that
 $|N_1(v)|\le |N_2(v)|$, as otherwise a higher cut value can be obtained by assigning $v$ to $V_2$ instead of $V_1$.
A similar observation holds for every node $v\in V_2$.
We say that a (not necessarily optimal) cut (node partition) $V_1,V_2$
satisfies the local optimality constraint if this property holds for every node $v$ in $V_1$ and $V_2$. Clearly every optimal cut
satisfies the local optimality constraint. Our main approach is based on computing the expected number of cuts which satisfy the local optimality
constraint and which achieve a certain cut value $\lfloor zn \rfloor$ for a constant $z$. Computing this expectation is an involved task and amounts to solving a certain two-dimensional large deviations problem.
The nature of this problem can be described as follows. Consider the random multi-graph as the configuration model generalized to \ER graph (later we will explain it in details in the next Section). The joint distribution of degrees of nodes in this random multi-graph can be described by the joint distribution arising from the balls
into bins problem. Specifically, for an even $n$, given a cut $V_1,V_2$ of a graph of equal size $|V_1|=|V_2|=n/2$ (later we will establish that this case determines the normalized exponent of the expected number of cuts which satisfy the local optimality condition by $n$ as $n \rightarrow \infty$),
conditioned to have value $\lfloor zn \rfloor$, such that the remaining parts $V_1$ and $V_2$ have the number of internal edges equal to $\lfloor z_1n \rfloor$ and $\lfloor z_2n \rfloor$ for two constants $z_1$ and $z_2$ respectively, with $\lfloor zn \rfloor + \lfloor z_1n \rfloor+ \lfloor z_2n \rfloor= \lfloor cn \rfloor$,
the joint distribution of the number of neighbors
of nodes of $V_1$ in part $V_2$ is described as the joint distribution arising from putting $\lfloor zn \rfloor$ balls into $n/2$ bins uniformly at random. Similarly,
the joint distribution of the number of neighbors of nodes of $V_j$  who also belong to $V_j$
is also described as the joint distribution arising from putting $\lfloor 2z_jn \rfloor$ balls
into $n/2$ bins uniformly at random, independently from the first process and from the other part.
Let the first $\lfloor zn \rfloor$ balls be colored blue, and the balls corresponding to the $\lfloor z_jn \rfloor$ edges be colored red for $j=1,2$.
Then the local optimality constraint means that in each bin the number of red balls does not exceed the number of blue balls. Achieving a particular cut value
$\lfloor zn \rfloor$ amounts to saying that the total number of blue balls equals $\lfloor zn \rfloor$. Both events are of large deviations type and computing the likelihood of this rare event amounts
to solving a two-dimensional large deviations problem. While solving this problem for a fixed $c$ appears to be intractable, it
can be solved asymptotically when $c$ is large since in this case the distribution of balls in bins
is well approximated by a normal distribution. As a result the large deviations rate function can be solved by integration over Gaussian distribution. This approach
leads to an upper bound stated in our main theorem.

To obtain the lower bound we consider the second moment of the number of cuts achieving value $z$ satisfying the local optimality constraint. The idea of the approach is
very similar as in the case of the upper bound, but details are more involved since we consider now pairs of cuts. We use the second moment method to obtain
a lower bound on the probability of existence of a cut with a particular value. This lower bound still is exponentially small. Our last step is to use an exponential
concentration of the Max-Cut value around its expectation in order to argue the existence of a cut with a stated value. The last step is similar to the one
used in earlier papers, such as Frieze~\cite{FriezeIndependentSet}.

Ideas somewhat similar to our local optimality condition, appear in a different context of random K-SAT problem. There the
single-flip satisfying truth assignment is used to obtain the upper bounds on the $3$-satisfiability threshold in~\cite{dubois2000typical}, and \cite{diaz2008new}.
The idea in these works was to count the expected number of those satisfying truth assignments which are local maxima in terms of a lexicographic ordering.
While the idea of using local optimality property in these papers and in our paper is somewhat similar, the details of the analysis differ substantially.

Our last result concerns maximum cut in cubic (namely $3$-regular) graphs.
Here the best known bound follows from a recent result by Lyons~\cite{lyons2014factors} who proves existence
of a cut with an asymptotic value at least $1.33773n$ in an arbitrary sequence of cubic connected graphs, whose girth (size of a smallest cycle) diverges to infinity. It is worth noting that Lyons' result also applys to maximum bisection for which to our best knowledge his result is still the state-of-the-art.  
Our improved bound is based on a simple argument taking advantage of a recent result by  Cs\'{o}ka et al.~\cite{csoka2014invariant} regarding the size of a largest bi-partite
subgraph of a cubic graph with large girth. We obtain
\begin{theorem}
\label{cgraph_cut}
Let $\G_n$ be an arbitrary sequence of $n$-node cubic connected graphs with girth diverging to infinity. For these graphs
\begin{align*}
\liminf_n{MC_{n,c}\over n}\ge 1.36000..~.
\end{align*}
\end{theorem}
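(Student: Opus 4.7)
The strategy is to deduce Theorem~\ref{cgraph_cut} as a short corollary of the recent result of Cs\'oka et al.~\cite{csoka2014invariant} on substructures of cubic graphs of large girth, combined with an elementary combinatorial reduction from such a substructure to a cut. In particular, none of the large-deviations or local-optimality machinery underlying Theorem~\ref{main_theorem} is needed; the promise of a ``simple argument'' in the introduction amounts to making the right reduction and a single numerical check.

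First, I would invoke the main bound of~\cite{csoka2014invariant}, which, via an invariant factor-of-i.i.d.\ Gaussian construction on the $3$-regular infinite tree $T_3$, produces in any sequence $\{\G_n\}$ of $n$-vertex cubic graphs with girth diverging to infinity a vertex substructure (either an independent set $I_n$, or an induced bipartite piece $S_n=A_n\sqcup B_n$) whose normalized size converges to an explicit constant $\alpha^{\ast}$ computable as a Gaussian integral over $T_3$.

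Second, I would perform the reduction. For the independent-set version, in any $d$-regular graph $G$ and any independent set $I\subseteq V(G)$, every edge incident to $I$ crosses the bipartition $(I,V(G)\setminus I)$, so
\begin{align*}
MC(G) \;\geq\; d\,|I|.
\end{align*}
Specializing to $d=3$ and applying the first step gives
\begin{align*}
\liminf_n \frac{MC_{n,c}}{n} \;\geq\; 3\alpha^{\ast}.
\end{align*}
If instead~\cite{csoka2014invariant} is stated in terms of an induced bipartite piece $S_n=A_n\sqcup B_n$, the same estimate follows by applying the bound to the larger of the two sides, which is itself an independent set in $\G_n$ since $\G_n[S_n]$ is bipartite with parts $A_n,B_n$.

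The main (and essentially only) obstacle is the numerical verification that the constant $3\alpha^{\ast}$ extracted from~\cite{csoka2014invariant} indeed evaluates to at least $1.36000\ldots$, strictly above the previous best $1.33774\ldots$ obtained by Lyons~\cite{lyons2014factors}. This amounts to a one-dimensional Gaussian integral computation, of the same flavor as, but considerably simpler than, those appearing in Lemmas~\ref{lemma:UniqueSolution} and~\ref{lemma:UniqueSolution2}; all remaining graph-theoretic steps are one-line consequences of $3$-regularity.
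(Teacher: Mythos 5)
There is a genuine gap: your reduction from the structure of~\cite{csoka2014invariant} to a cut is too lossy to reach $1.36$. The bound $MC(G)\ge 3|I|$ for an independent set $I$ in a cubic graph is correct, but the densities available make it useless here. Theorem~2 of~\cite{csoka2014invariant}, which is what the paper actually uses, gives an \emph{induced bipartite} subgraph $S_n=A_n\sqcup B_n$ with $|S_n|\ge 0.86n$; passing to the larger side only guarantees an independent set of size $\ge 0.43n$, whence $MC\ge 3\cdot 0.43n=1.29n$, below both $1.36n$ and Lyons's $1.33774n$. If instead you invoke the independent-set results for cubic graphs of large girth directly, the known factor-of-i.i.d.\ constructions give density $\alpha^{\ast}\approx 0.436$, so $3\alpha^{\ast}\approx 1.31$; to get $1.36$ this way you would need $\alpha^{\ast}\ge 0.4533$, which no known construction provides (and which is essentially at the ceiling of what is believed possible). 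So the ``single numerical check'' you defer to the end is exactly where the argument fails: throwing away half of the bipartite piece (or all edges not incident to $I$) discards too much.

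The paper's proof keeps the whole bipartite piece and adds one extra idea you are missing: a greedy majority completion. Color $A_n$ and $B_n$ with opposite colors, so every edge induced by $S_n$ is cut; then insert the at most $0.14n$ remaining vertices one at a time (always picking an uncolored vertex with the largest number of already-colored neighbors and coloring it against the majority of those neighbors). Since the graph is cubic, each inserted vertex creates at most one uncut edge at the moment of its insertion, and every uncut edge is charged to the later-colored of its endpoints, so the total number of uncut edges is at most $0.14n$. As the cubic graph has $1.5n$ edges, the resulting cut has size at least $1.5n-0.14n=1.36n$, which is the claimed bound. Your proposal can be repaired by replacing the independent-set reduction with this completion argument; as it stands, the stated constant cannot be obtained.
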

Note that while the girth of the random $n$-node cubic graph (a graph generated uniformly at random from the set of all $3$-regular $n$-node graphs)
does not necessarily diverge to infinity, this graph does have mostly a locally tree-like structure and the results which regard ``global'' structure such as Max-Cut obtained from the regular graphs with diverging girth apply to these graphs as well, see for
one example where such an argument is developed~\cite{BandyopadhyayGamarnikCounting}. Specifically, one can use the construction described on page 22, 
Subsection 4.4 of the aforementioned paper. In this paper a simple procedure is described consisting of ``blowing up'' a portion of the random graph containing
small cycles into a part which does not contain cycles of any fixed length $g$. Since it affects only a constant size portion of the graph it does not affect
the limiting value of a maximum cut.

The remainder of this paper is organized as follows. In the next section we provide some preliminary technical results regarding the balls into bins model.
In the same section we state and prove the so-called local large deviations results for lattice based random variables. These results serve as a basis for computing
the first and second moments of the number of cuts satisfying the local optimality constraints. Section~\ref{section:UpperBound} is devoted to establishing the upper
bound part of our main result, Theorem~\ref{main_theorem}, using the first moment method.
In Section~\ref{section:LowerBound} we derive an optimization problem the solution of which describes the asymptotics of second moment.
In Section~\ref{section:SystemEquation} this optimization problem is reduced to a system of equations, the unique solution of which is used to obtain the lower bound
on the maximum cut value. Most of the ideas are based on the same techniques as the ones used for the upper bound part, but the details are very
lengthy and far more involved. Section~\ref{section:proof_MaxCut_cubic} is dedicated to the proof of Theorem \ref{cgraph_cut}. The numerical answers appearing in the statement and the proofs of our result are based on computer assisted computation and thus
our results should be qualified as computer assisted. In the last Section we conclude with several open problems.

\section{Preliminary results. Random multi-graphs, the Balls into Bins model and the Local Large Deviations bounds}\label{section:preliminary}
Our random graph model $\G(n,m)$ model is obtained by selecting $m$ out of $n(n-1)/2$ edges uniformly at random without replacement. The analysis
below is significantly simplified by switching to a more tractable random multi-graph model generated from the configuration model
where edge repetition and loops are allowed. Then
we use a fairly standard observation that this change does not impact the asymptotic value of the Max-Cut. Thus consider the set of $n(n+1)/2$
edges on $n$ nodes, which now include $n$ loops and suppose we select $m$ edges uniformly at random with replacement. Equivalently, one
can think of this as an experiment of throwing  $2m$ balls (also commonly called clones)  into $n$ bins (nodes) of the graph uniformly at random,
and then creating a random $m$-matching between the $2m$ balls. An edge between node $i$ and $j$ is formed
if and only if there exist two balls thrown into bins $i$ and $j$ which are connected in the matching. In particular loops and parallel edges are allowed,
though it is easy to check that when $m=O(n)$, with probability bounded away from zero as $n\rightarrow\infty$, the number of loops and parallel edges is zero.
Conditional on this event, the resulting graph is $\G(n,m)$. Since all the results obtained in this paper hold w.h.p., we now assume from this point on that
$\G(n,\lfloor cn\rfloor)$ stands for the random multi-graph model described above.

In order to implement the local optimality condition for Max-Cut, we first introduce two relevant lemmas regarding the variant of the so-called occupancy (Balls into Bins) problem.
 In order to decouple the distribution of the number of balls each bin receives, we need the following ``Poissonization lemma" \cite{durrett2010probability,coja2013upper}.
\begin{lemma}
{\upshape \cite[Exercise 3.6.13]{durrett2010probability}; \cite[Corollary 2.4]{coja2013upper}}
\label{poissonization}
Consider an experiment where $\mu \in \mathbb{N}$ balls are thrown independently and uniformly at random (u.a.r.) into $n$ bins.
Let $E_i$ be the number of balls in bin $i \in [n]\triangleq \{1,2,\ldots,n\}$. Let $\lambda=\mu/n>0$ and $(B_i)_{i \in [n]}$ be a family of independent Poisson variables with the same mean $\lambda$. Then for any sequence $(t_i)_{i \in [n]}$ of non-negative integers such that $\sum_{i=1}^{n} t_i=\mu$ we have
\begin{align}
\label{Poi_approx1}
\mathbb{P}[ E_i=t_i, 1\leq i\leq n] &=\mathbb{P}[ B_i=t_i, 1\leq i\leq n| \sum_{i=1}^{n} B_i=\mu]
                                      =\Theta_{\mu}(\sqrt{\mu}) \mathbb{P}[ B_i=t_i, 1\leq i\leq n].
\end{align}
\end{lemma}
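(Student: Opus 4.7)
The statement has two equalities, and each has a standard proof ingredient, so the plan is to treat them in turn.

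For the first equality, my plan is to verify both sides by direct computation and show they equal the multinomial probability $\mu!\,n^{-\mu}/\prod_{i} t_i!$. On the left, the number of balls $(E_1,\dots,E_n)$ obtained by throwing $\mu$ balls uniformly at random into $n$ bins is multinomial with parameters $\mu$ and $(1/n,\dots,1/n)$, which gives exactly this expression. On the right, using independence of the Poissons and the fact that a sum of independent $\mathrm{Pois}(\lambda)$ is $\mathrm{Pois}(n\lambda)=\mathrm{Pois}(\mu)$, I would compute
\begin{align*}
\mathbb{P}\bigl[B_i = t_i,\, 1\le i\le n \,\bigm|\, \textstyle\sum_i B_i=\mu\bigr]
= \frac{\prod_{i=1}^n e^{-\lambda}\lambda^{t_i}/t_i!}{e^{-\mu}\mu^{\mu}/\mu!}
= \frac{\mu!}{\prod_{i=1}^n t_i!}\cdot\frac{1}{n^{\mu}},
\end{align*}
using $\sum_i t_i=\mu$ and $\lambda=\mu/n$ to cancel the factor $\lambda^\mu e^{-n\lambda}$ against $\mu^\mu e^{-\mu}$. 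The two sides agree, establishing the first equality.

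For the second equality, the point is to control the conditioning factor $1/\mathbb{P}[\sum_i B_i=\mu]$. Since $\sum_i B_i\sim\mathrm{Pois}(\mu)$, I need to show this probability is $\Theta(1/\sqrt{\mu})$ as $\mu\to\infty$. The cleanest route is Stirling's formula: $\mathbb{P}[\mathrm{Pois}(\mu)=\mu]=e^{-\mu}\mu^{\mu}/\mu!$, and Stirling gives $\mu!=\sqrt{2\pi\mu}\,\mu^{\mu}e^{-\mu}(1+o(1))$, so the probability is $(2\pi\mu)^{-1/2}(1+o(1))$. Dividing the first equality by this probability yields
\begin{align*}
\mathbb{P}[B_i=t_i,\,1\le i\le n\mid \textstyle\sum_i B_i=\mu]
=\Theta_\mu(\sqrt{\mu})\,\mathbb{P}[B_i=t_i,\,1\le i\le n],
\end{align*}
which is the second equality.

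I do not anticipate any real obstacle; the only subtlety is making sure the $\Theta_\mu(\sqrt{\mu})$ factor is uniform in the sequence $(t_i)$, but this is automatic because the factor comes entirely from the $(t_i)$-independent denominator $\mathbb{P}[\sum_i B_i=\mu]$. I would note explicitly that the bound depends on $\mu$ alone (not on $n$ or on the configuration $(t_i)$), which is exactly the form in which the lemma is later applied when computing first and second moments of the number of locally optimal cuts.
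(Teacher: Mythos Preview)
Your proof is correct and is the standard argument. Note that the paper does not actually prove this lemma itself: it is stated with citations to Durrett's textbook and to Coja-Oghlan's paper, and no proof is given here. Your direct-computation approach for the first equality and Stirling's formula for the second is exactly the expected derivation.
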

Here the standard order of magnitude notation $\Theta_{\mu}(\sqrt{\mu})$ denotes a non-negative function $f(\mu)$ such that
\begin{align*}
0<\liminf_{\mu\rightarrow\infty}{f(\mu)\over \sqrt{\mu}}\le \limsup_{\mu\rightarrow\infty}{f(\mu)\over \sqrt{\mu}}<\infty.
\end{align*}
Consider now an experiment of throwing balls into bins twice. First $\mu_1$ balls are thrown independently and u.a.r. into $n$ bins. Denote the number of balls in bin $i$ by $E_i$.
Next, the bins are reset empty and another $\mu_2$ balls are thrown  u.a.r. into $n$ bins independently for all bins and independently from the first experiment.
Denote the number of balls in bin $i$ by $F_i$. Correspondingly, let $B_i$, $1 \leq i \leq n$, and $C_i$, $1 \leq i \leq n$, be two families of independent Poisson variables with means $\lambda_1=\mu_1/n$ and $\lambda_2=\mu_2/n$, respectively. We rely on Lemma \ref{poissonization} to evaluate the probability
\begin{align}\label{eq:Kmu_mubar}
K(n,\mu_1,\mu_2)\triangleq \mathbb{P}[ E_i \geq F_i , 1\leq i\leq n].
\end{align}

\begin{lemma}
\label{lemma_twoPo}
The following holds
\begin{align}
K(n,\mu_1,\mu_2)
& = \Theta_{\mu_1}(\sqrt{\mu_1}) \Theta_{\mu_2}(\sqrt{\mu_2})
\mathbb{P}\left[\sum_{i=1}^{n} B_i=\mu_1, \sum_{i=1}^{n} C_i=\mu_2 \Bigm\vert B_i \geq C_i, 1\leq i\leq n \right]
(\mathbb{P}[B_1 \geq C_1 ])^{n}. \label{Pos_approx}
\end{align}
\end{lemma}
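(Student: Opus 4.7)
The plan is to apply the Poissonization identity of Lemma~\ref{poissonization} to each of the two independent balls-into-bins experiments in turn, and then reassemble the resulting event in terms of the independent Poisson families $(B_i)$ and $(C_i)$. First, I would expand
\begin{align*}
K(n,\mu_1,\mu_2) = \sum_{\substack{(t_i),(s_i):\, t_i \ge s_i\\ \sum_i t_i=\mu_1,\ \sum_i s_i=\mu_2}} \mathbb{P}[E_i=t_i,\, 1\le i\le n]\; \mathbb{P}[F_i=s_i,\, 1\le i\le n]
\end{align*}
by summing over all admissible joint occupancy profiles and factoring via independence of the two ball-throwing experiments.

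Next, I would invoke Lemma~\ref{poissonization} once on the $E$-family (with total $\mu_1$ and Poisson parameter $\mu_1/n$) and once on the $F$-family (with total $\mu_2$ and Poisson parameter $\mu_2/n$). Each invocation produces a multiplicative $\Theta$-factor that depends only on $\mu_j$ and $n$ (and not on the particular profile) and converts the occupancy probability into the joint independent-Poisson probability $\mathbb{P}[B_i=t_i,\, 1\le i\le n]$, respectively $\mathbb{P}[C_i=s_i,\, 1\le i\le n]$. Pulling both $\Theta$-factors out of the sum and using mutual independence of the $B_i$'s and $C_i$'s, the remaining sum collapses to the joint probability
\begin{align*}
\mathbb{P}\!\left[B_i \ge C_i,\, 1\le i\le n,\ \sum_{i=1}^n B_i = \mu_1,\ \sum_{i=1}^n C_i = \mu_2\right].
\end{align*}
I would then split this by the definition of conditional probability into the conditional factor appearing in (\ref{Pos_approx}) multiplied by $\mathbb{P}[B_i \ge C_i,\, 1\le i\le n]$, and invoke independence of the pairs $(B_i,C_i)$ across $i$ together with their identical distribution to obtain $\mathbb{P}[B_i \ge C_i,\, 1\le i\le n] = (\mathbb{P}[B_1 \ge C_1])^n$. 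Assembling these factors yields exactly (\ref{Pos_approx}).

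The proof is essentially bookkeeping with Lemma~\ref{poissonization}, so no serious obstacle arises. The only point to handle carefully is to verify that the two $\Theta$-factors from the two Poissonization steps depend only on $(\mu_1,n)$ and $(\mu_2,n)$ respectively and not on the particular profiles $(t_i),(s_i)$, so that they legitimately factor out of the sum; this is precisely what Lemma~\ref{poissonization} asserts. Once that is acknowledged, the remaining manipulations are algebraic identities for joint probabilities of independent random variables.
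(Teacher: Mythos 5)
Your proposal is correct and follows essentially the same route as the paper: expand $K(n,\mu_1,\mu_2)$ as a sum over admissible occupancy profiles, apply Lemma~\ref{poissonization} to each of the two independent families to replace occupancy probabilities by independent-Poisson probabilities with profile-independent $\Theta$-factors, collapse the sum to the joint Poisson event, and then split off the conditional probability and use independence across bins to obtain $(\mathbb{P}[B_1\ge C_1])^n$. Nothing further is needed.
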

\begin{proof}[Proof of Lemma~\ref{lemma_twoPo}]
Let
\begin{align*}
S(\mu_1,\mu_2)=\{((t_i, s_i))_{1\leq i \leq n} \in (\mathbb{Z}_{\geq 0})^{2n}: t_i \geq s_i, 1\leq i \leq n; \sum_{i=1}^{n}t_i=\mu_1; \sum_{i=1}^{n}s_i=\mu_2\}.
\end{align*}
We have
\begin{align*}
 \mathbb{P}[E_i \geq F_i, & 1\leq i\leq n]= \\
=& \sum_{S(\mu_1,\mu_2)} \mathbb{P}[E_i=t_i, 1\leq i\leq n]\mathbb{P}[F_i=s_i, 1\leq i\leq n] \nonumber \\
=& \sum_{S(\mu_1,\mu_2)} \Theta_{\mu_1}(\sqrt{\mu_1}) \Theta_{\mu_2}(\sqrt{\mu_2}) \mathbb{P}[B_i=t_i, 1\leq i\leq n]
\mathbb{P}[C_i=s_i, 1\leq i\leq n] \nonumber \\
=& \Theta_{\mu_1}(\sqrt{\mu_1}) \Theta_{\mu_2} (\sqrt{\mu_2})
\mathbb{P}[B_i \geq C_i, 1\leq i\leq n, \sum_{i=1}^{n}B_i=\mu_1, \sum_{i=1}^{n} C_i=\mu_2 ] \nonumber \\
																								=& \Theta_{\mu_1}(\sqrt{\mu_1}) \Theta_{\mu_2}(\sqrt{\mu_2})
\mathbb{P}\left[\sum_{i=1}^{n} B_i=\mu_1, \sum_{i=1}^{n} C_i=\mu_2 \Bigm\vert B_i \geq C_i, 1\leq i\leq n \right]
\mathbb{P}[B_i \geq C_i, 1\leq i\leq n] \nonumber
\end{align*}
By the independence of $B_i$ and $C_i$, $1\leq i \leq n$ ,we have (\ref{Pos_approx}).
\end{proof}

In order to compute the conditional probability in (\ref{Pos_approx}), we need to rely on  multivariate local limit theorems for
large deviations~\cite{richter1958multi},\cite{chaganty1985large},\cite{chaganty1986multidimensional}.
The classical large deviations theory provides tight estimates of the exponent $\gamma$ appearing when calculating the rare events of the form
$\pr(X_n>nx)\approx \exp(-\gamma xn)$. The local large deviations theory instead provides estimates of the form $\pr(X_n=nx)\approx \exp(-\gamma xn)$,
where usually the same exponent $\gamma$ governs the large deviations rate. Naturally, the local case is restricted to cases when values $nx$ belong to
the range of random variables $X_n$.

\ignore{ which were were derived in
Richter~\cite{Richter1958} and then improved by Chaganty and Sethuraman~\cite{Chaganty1985, Chaganty1986}. Since the theorems they developed are mainly for the general case of arbitrary sequences of random variables with assumptions which not easy to check, here we will typically rely on the local Central Limit Theorems to derive a local theorem for large deviations tailored for the application in this paper.}

Thus let $\{e_1,\dots,e_d\}$ be an orthonormal basis of $\mathbb{R}^d$ where $e_i$ is the unit vector of $0$'s except for $1$ in the $i$th position. Let $X_1, X_2, \dots$ be i.i.d. random vectors in $\mathbb{R}^d$ with mean equal to vector $\mathbf{0}$ and finite second moment. Furthermore, suppose the covariance matrix $\Sigma$ is non-singular,
and the distribution of $X_i$ is supported on a lattice
with parameters $b\in\R^d, h_i\in \R, 1\le i\le d$. Namely,
\begin{align*}
\mathbb{P}[\exists~z_1,\ldots,z_d\in\Z: X_i =b + \sum_{i=1}^d h_i e_i z_i ]=1,
\end{align*}
and this is the smallest (in set inclusion sense) lattice with this property.
If $S_n=X_1+\dots+X_n$, then $S_n$ is of the form $nb+\sum_{i=1}^dh_i e_i z_i$ for some $z_i\in \Z, 1\le i\le d$. Let
\begin{align*}
p_n(x)=\mathbb{P}[S_n/\sqrt{n}=x].
\end{align*}
This probability is positive only when
\begin{align*}
x\in \mathcal{L}_n\triangleq\{(nb+\sum_{i=1}^dh_i e_i z_i)/\sqrt{n}, z_i \in  \mathbb{Z}, 1\le i\le d \}.
\end{align*}
Let
\begin{align*}
p(x)=\frac{1}{(2\pi)^{\frac{d}{2}}\sqrt{\lvert \Sigma \rvert}} \exp\left(-\frac{1}{2}x^T \Sigma^{-1}x \right) \quad \text{for} \; x\in \mathbb{R}^d.
\end{align*}
The following local Central Limit Theorem can be found as  Theorem 3.5.2 in \cite{durrett2010probability}.
\begin{theorem}
\label{LCLT}
Under the hypotheses above, as $n\rightarrow \infty$,
\begin{align}
\sup_{x\in \mathcal{L}_n} \left\lvert \frac{n^{\frac{d}{2}}}{\prod_{i=1}^d h_i}p_n(x)-p(x) \right\rvert \rightarrow 0.
\end{align}
\end{theorem}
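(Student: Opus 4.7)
The plan is to prove the local CLT by the classical Fourier-analytic route, exploiting the fact that for a lattice-valued random vector the distribution can be recovered from its characteristic function by integration over a single fundamental domain of the dual lattice. Writing $\phi(t)=\mathbb{E}[e^{i\langle t,X_1\rangle}]$ and $D=\prod_{i=1}^{d}[-\pi/h_i,\pi/h_i]$ (a fundamental domain of the dual of $\sum h_i e_i\Z$), the lattice Fourier inversion formula applied to $S_n$, whose characteristic function equals $\phi(t)^n$, gives for every $s\in nb+\sum h_i e_i\Z$
\[
\mathbb{P}[S_n=s]=\frac{\prod_i h_i}{(2\pi)^d}\int_{D}e^{-i\langle t,s\rangle}\phi(t)^n\,dt.
\]
Substituting $s=x\sqrt{n}$ and rescaling $u=t\sqrt{n}$ yields
\[
\frac{n^{d/2}}{\prod_i h_i}\,p_n(x)=\frac{1}{(2\pi)^d}\int_{\sqrt{n}\,D}e^{-i\langle u,x\rangle}\phi(u/\sqrt{n})^n\,du,
\]
while the target density admits the parallel representation $n(x)=(2\pi)^{-d}\int_{\R^d}e^{-i\langle u,x\rangle}\exp(-u^T\Sigma u/2)\,du$. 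Subtracting and using $|e^{-i\langle u,x\rangle}|\le 1$ reduces the theorem to showing that
\[
\int_{\sqrt{n}\,D}\bigl|\phi(u/\sqrt{n})^n-e^{-u^T\Sigma u/2}\bigr|\,du+\int_{\R^d\setminus\sqrt{n}\,D}e^{-u^T\Sigma u/2}\,du\longrightarrow 0,
\]
and this $L^1$ bound is uniform in $x$, so the supremum over $\mathcal{L}_n$ comes for free.

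The second integral is harmless because $\sqrt{n}\,D\uparrow\R^d$ and the Gaussian has exponentially small tails. To handle the first integral I would split $\sqrt{n}\,D$ into three regions. (i) On the compact set $|u|\le A$, the Taylor expansion $\log\phi(t)=-\tfrac12 t^T\Sigma t+o(|t|^2)$ at $t=0$ (valid since $\E X_1=0$ and $\E|X_1|^2<\infty$) gives $\phi(u/\sqrt{n})^n\to e^{-u^T\Sigma u/2}$ uniformly, and dominated convergence disposes of this region. (ii) On $A\le|u|\le\delta\sqrt{n}$ with $\delta$ small, the same Taylor expansion furnishes a quadratic bound $|\phi(t)|\le e^{-c|t|^2}$ in a neighborhood of the origin, hence $|\phi(u/\sqrt{n})^n|\le e^{-c|u|^2}$, and the integrand is dominated by a Gaussian tail that can be made arbitrarily small by choosing $A$ large. (iii) On $|u|\ge\delta\sqrt{n}$ with $u\in\sqrt{n}\,D$, neither Taylor expansion nor Gaussian dominance is available, and one has to exploit the minimality of the lattice directly.

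Region (iii) is the main obstacle and the step where the ``smallest lattice'' hypothesis earns its keep. The claim one needs is that $|\phi(t)|<1$ on $D\setminus\{0\}$; indeed if $|\phi(t_0)|=1$ for some $t_0\in D\setminus\{0\}$ then $e^{i\langle t_0,X_1\rangle}$ is almost surely a constant $e^{i\alpha}$, forcing $X_1$ into the affine sublattice $\{x:\langle t_0,x\rangle\in\alpha+2\pi\Z\}$, whose intersection with $b+\sum h_i e_i\Z$ is strictly smaller than the original lattice, contradicting minimality. Continuity of $\phi$ and compactness of $\{t\in D:|t|\ge\delta\}$ then upgrade this pointwise bound to $\sup_{t\in D,|t|\ge\delta}|\phi(t)|\le\rho$ for some $\rho<1$, so $|\phi(u/\sqrt{n})^n|\le\rho^n$ throughout region (iii), and its contribution to the $L^1$ integral is at most $\rho^n\cdot\mathrm{vol}(\sqrt{n}\,D)=O(\rho^n n^{d/2})\to 0$. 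Combining the three estimates yields the desired uniform convergence. The only subtlety introduced by a nonzero shift $b$ is a pure phase factor $e^{-i\sqrt{n}\,\langle u,b\rangle}$ absorbed into $e^{-i\langle u,x\rangle}$, which does not affect any of the magnitude bounds above.
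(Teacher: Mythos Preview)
Your Fourier-analytic argument is correct and is precisely the classical proof; the three-region decomposition and the use of lattice minimality to get $\sup_{t\in D,\,|t|\ge\delta}|\phi(t)|\le\rho<1$ are exactly what is required. Note, however, that the paper does not supply its own proof of this theorem: it is quoted as Theorem~3.5.2 of Durrett's \emph{Probability: Theory and Examples} and used as a black box to derive the local large-deviations Theorem~\ref{LCLTLDP}. Your write-up is essentially the proof in Durrett, so there is nothing to compare beyond observing that you have reproduced the cited argument rather than anything the authors themselves contributed.
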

Based on this result, we use the change-of-measure technique to obtain the following local limit theorem for large deviations, as in the proof of Cram\'er's Theorem and Sanov's Theorem in \cite{demzei98}. 
Let $M(\theta)=\mathbb{E}[e^{\langle \theta,X_1 \rangle}]$ be the moment generating function of $X_1$. Let
$\Lambda(\theta) \triangleq \log M(\theta)$.  Let also $\mathcal{D}_{\Lambda} \triangleq \{\theta \in \mathbb{R}^d: \Lambda(\theta)<\infty\}$ denote the domain
of the moment generating function of $X_1$. It is known~\cite{demzei98} that if $\mathcal{D}_{\Lambda}=\mathbb{R}^d$ then for every $y\in \R^d$ there exists a unique
$\theta^*\in\R^d$ such that $y=\nabla \Lambda(\theta^*)$. It is the unique $\theta$ which achieves the large deviation rate at $y$, namely
$\langle \theta^*,y \rangle-\Lambda(\theta^*)=\sup_\theta(\langle \theta,y \rangle-\Lambda(\theta))$.
\begin{theorem}
\label{LCLTLDP} Suppose $\mathcal{D}_{\Lambda}=\mathbb{R}^d$. Suppose  $y\in\R^d$ is such that $\sqrt{n}y\in\mathcal{L}_n$ for
all sufficiently large $n\in\Z_+$.
Let $\theta^*$ be defined uniquely by $y=\nabla \Lambda(\theta^*)$. Then,
\begin{align}
\lim_{n\rightarrow \infty} \frac{1}{n} \log \mathbb{P}[S_n/n=y] &=-\langle \theta^{*},y \rangle+\Lambda(\theta^*). \label{LLPD1}
\end{align}
\end{theorem}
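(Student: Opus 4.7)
The plan is to use an exponential change-of-measure (``tilting'') argument which reduces the large-deviations estimate to an application of the local CLT, Theorem~\ref{LCLT}, under a tilted distribution.

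First, I would introduce the tilted probability measure $\widetilde{\pr}$ under which $X_1, X_2, \ldots$ remain i.i.d.\ with per-sample Radon--Nikodym derivative $\exp(\langle \theta^*, X_1\rangle - \Lambda(\theta^*))$. A direct computation gives $\widetilde{\E}[X_1] = \nabla \Lambda(\theta^*) = y$ and tilted covariance $\widetilde{\Sigma} = \nabla^2 \Lambda(\theta^*)$, which is positive definite since $\Sigma$ is non-singular and $\Lambda$ is strictly convex on $\R^d$ whenever $X_1$ is non-degenerate. Multiplying the per-sample derivatives and restricting to the event $\{S_n = ny\}$ yields the exact identity
\begin{equation*}
\pr[S_n = ny] \;=\; e^{-n(\langle \theta^*, y\rangle - \Lambda(\theta^*))}\, \widetilde{\pr}[S_n = ny],
\end{equation*}
which already exhibits the claimed large-deviations rate as the exponential prefactor.

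Next, I would apply Theorem~\ref{LCLT} to the centered tilted summands $Y_i \triangleq X_i - y$, which under $\widetilde{\pr}$ are i.i.d.\ with mean zero, covariance $\widetilde{\Sigma}$, and supported on the lattice with offset $b - y$ and the same scales $h_1, \ldots, h_d$ as $X_i$ (tilting by a strictly positive density preserves support, and a deterministic translation preserves lattice spacings). The hypothesis $\sqrt{n}\, y \in \mathcal{L}_n$ is precisely the arithmetic condition that places $0$ in the shifted lattice $\widetilde{\mathcal{L}}_n$ associated with $\sum_{i=1}^n Y_i / \sqrt{n}$, so Theorem~\ref{LCLT} evaluated at $0$ produces $\widetilde{\pr}[S_n = ny] = \Theta(n^{-d/2})$, a polynomially decaying prefactor.

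Combining these two ingredients, taking logarithms, and dividing by $n$ annihilates the polynomial factor and leaves exactly (\ref{LLPD1}). The only delicate step (bookkeeping rather than a deep obstacle) is transferring the lattice hypotheses of Theorem~\ref{LCLT} through the tilting and the centering: one must confirm that tilting does not enlarge the minimal lattice of $X_1$, and that the standing assumption $\sqrt{n}\,y \in \mathcal{L}_n$ is precisely the condition $0 \in \widetilde{\mathcal{L}}_n$ that the local CLT requires in order to deliver a positive limiting mass at the target.
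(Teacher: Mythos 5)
Your proposal is correct and follows essentially the same route as the paper's proof: an exponential tilt by $\theta^*$ yielding the exact identity $\pr[S_n=ny]=e^{-n(\langle\theta^*,y\rangle-\Lambda(\theta^*))}\widetilde{\pr}[S_n=ny]$, followed by the local CLT (Theorem~\ref{LCLT}) applied to the centered tilted variables on the shifted lattice with offset $b-y$, using $\sqrt{n}\,y\in\mathcal{L}_n$ to place $0$ in that lattice. Your additional remarks on the positive definiteness of the tilted covariance and the $\Theta(n^{-d/2})$ rate are fine refinements of the same argument, not a different approach.
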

The proof is obtained by combining a standard change of measure technique in the theory of large deviations with the local Central Limit Theorem~\ref{LCLT}.
We include the proof for completeness.

\begin{proof}[Proof of Theorem~\ref{LCLTLDP}]
Let $\mu$ be the probability measure associated with $X_1$ and $\mu_n$ be the probability measure associated with $S_n/n$. Using $y=\nabla \Lambda(\theta^{*})$, define a new probability measure $\tilde{\mu}$ with the same support as $X_1$  in terms of $\mu$ as follows:
\begin{align}
\label{measure_change}
\frac{d\tilde{\mu}}{d\mu}(z)=e^{\langle \theta^{*}, z \rangle-\Lambda(\theta^{*})},
\end{align}
It is easy to see that it is a probability measure by observing
$$
\int_{\mathbb{R}^d} d \tilde{\mu} = \int_{\mathbb{R}^d} e^{\langle \theta^{*}, z \rangle-\Lambda(\theta^{*})} d \mu=\frac{1}{M(\theta^{*})}\int_{\mathbb{R}^d} e^{\langle \theta^{*}, z \rangle}  d \mu=1.
$$

Let $\tilde{\mu}_n$ be the associated probability measure of $\tilde{S}_n=(\tilde{X}_1+\dots+\tilde{X}_n)/n$ where $\tilde{X}_i$, $1\leq i \leq n$, are i.i.d. random vectors with probability measure $\tilde{\mu}$. Then we have
\begin{align}
\mathbb{P}[S_n/n=y]=& \mu_n(\{y\})= \int_{\sum_{i=1}^n z_i=yn} \mu(dz_i)\nonumber \\
                           =& \int_{\sum_{i=1}^n z_i=yn}
                           e^{-\sum_{i=1}^n \langle \theta^{*}, z_i \rangle+n\Lambda(\theta^{*})}  \tilde{\mu}(dz_i)\nonumber \\
													 =& e^{-n\langle \theta^{*}, y \rangle + n \Lambda(\theta^{*}) } \tilde{\mu}_n(\{y\}).
\end{align}
Then we have
\begin{align}
\label{n_log_LPD}
 \frac{1}{n} \log \mathbb{P}[S_n/n=y]=-\langle \theta^*, y \rangle + \Lambda(\theta^{*}) + \frac{1}{n} \log \tilde{\mu}_n(\{y\}).
\end{align}
By the choice of $\theta^{*}$, we have
\begin{align}
\mathbb{E}_{\tilde{\mu}}[\tilde{X}_1]=\frac{1}{M(\theta^{*})}\int_{\mathbb{R}^d} z e^{\langle \theta^{*}, z \rangle}  d \mu = \nabla \Lambda(\theta^{*})=y,
\end{align}
where we used $\nabla M(\theta^{*})= \mathbb{E}[X_1 e^{\langle \theta^{*}, X_1 \rangle}] $, namely, the order of differentiation and expectation operators can be changed,
see for example Lemma 2.2.5 (c) in~\cite{demzei98}.
Furthermore, the probability measures $\tilde{\mu}$ defined in (\ref{measure_change}) has moments of all orders.
Then $\tilde X_1-y$ is a zero mean random vector with finite moments of all orders. Now since the lattice supporting $X_1$ and $\tilde X_1$ is the same,
the lattice supporting $\tilde X_i-y$ is described by parameters $b-y, h_i, 1\le i\le d$. Namely the same $h_i$ and $b$ replaced by $b-y$. Now the assumption
$\sqrt{n}y\in\mathcal{L}_n$ implies that $ny$ is of the form $nb+\sum_i h_ie_iz_i$, implying $n(b-y)+\sum_i h_ie_iz_i=0$. We conclude $0$ belongs to the
set $\mathcal{L}_n$ with $b-y$ replacing $b$.
Hence, Theorem \ref{LCLT} can be used to estimate the last term in (\ref{n_log_LPD}), which gives
\begin{align}
\lim_{n\rightarrow \infty}\frac{1}{n}\log \tilde{\mu}_n(\{y\})=0.
\end{align}

\end{proof}

\section{Upper bound. The first moment method}\label{section:UpperBound}

In this section, we establish the upper bound part of Theorem~\ref{main_theorem} using the first moment method.
We will prove the upper bound of Max-Cut size on $\Gncn$ by counting the expected number of cuts with a given cardinality,
satisfying the local optimality condition. For a constant $z$, let $X(\lfloor zn \rfloor)$ be the number of cuts $V_1,V_2=V(\Gncn)\setminus V_1$
of $\Gncn$ of size $\lfloor zn \rfloor$ in $\Gncn$ which satisfy the local optimality condition.
By results in~\cite{Coppersmith2004}, since we already know that the maximum cut size normalized by $n$ is
$c/2+\Theta_c(\sqrt{c})$ w.h.p., then for convenience we rescale $z$ by letting
$z=c/2+x\sqrt{c}$ for a positive real value $x$. According to the results from~\cite{Coppersmith2004} we know that we can limit ourselves to values $x\in[0.37613,0.58870]$.
\begin{proposition}\label{prop:comp_EbarX}
For every $x$ in (\ref{eq:xupperrange}) there exists a unique solution $\theta(x)$ of (\ref{transce2}).
Furthermore, for every $x$ in this range
\begin{align}
\label{EtildeEX}
\lim_{n\rightarrow \infty} \frac{1}{n} \log \mathbb{E}\left[X \left( \lfloor \left(c/2+x\sqrt{c} \right)n \rfloor \right) \right]=w(x) + o_c(1),
\end{align}
where $w(x)$ is defined in (\ref{eq:wx}).
\end{proposition}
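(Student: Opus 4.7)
My plan is to compute $\mathbb{E}[X(zn)]$ via the configuration model, reduce the local optimality constraint to a two-dimensional balls-into-bins event, and extract its exponential rate using Theorem~\ref{LCLTLDP}. Write $z=c/2+x\sqrt{c}$ and parametrize an ordered bipartition by $|V_1|=\alpha n$ and by the edge split $(z_1n,z_2n,zn)$, where $z_jn$ counts the edges internal to $V_j$ and $z_1+z_2+z=c$. By vertex exchangeability,
\begin{align*}
\mathbb{E}[X(zn)] = \sum_\alpha \binom{n}{\alpha n}\sum_{z_1+z_2=c-z}\mathbb{P}[\text{edge split}]\cdot\mathbb{P}[\text{loc.\ opt.}\mid\text{edge split}],
\end{align*}
where the first factor is the standard multinomial of the configuration model. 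Conditional on the edge split, the $V_1$-neighbor counts of $V_1$-nodes arise by independently throwing $2z_1n$ ``internal'' clones and $zn$ ``cross'' clones into $\alpha n$ bins, so the local optimality on $V_1$ is exactly the event that cross-ball counts dominate internal-ball counts bin-by-bin, i.e.\ the $K$-probability of Lemma~\ref{lemma_twoPo} with parameters $(\alpha n,\,zn,\,2z_1n)$. The same holds verbatim on $V_2$, and the two sides decouple.

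Applying Lemma~\ref{lemma_twoPo}, each $K$-factor splits, up to a negligible prefactor, into $(\mathbb{P}[B\ge C])^{\alpha n}$ times the probability that a sum of $\alpha n$ i.i.d.\ two-dimensional vectors $(B_i,C_i)$, conditioned on $B_i\ge C_i$, hits the target $(zn,2z_1n)$, where $B,C$ are independent Poissons with rates $z/\alpha$ and $2z_1/\alpha$. I would then invoke Theorem~\ref{LCLTLDP} on the resulting conditioned lattice vector: its log-MGF $\Lambda$ is finite on $\R^2$ since the Poisson MGF is entire and the conditioning merely tilts by a bounded likelihood ratio, and the conditioned covariance is non-singular. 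This gives the exponential rate $-\langle\theta^*,y\rangle+\Lambda(\theta^*)$ at the unique Legendre dual $\theta^*=(\theta_1^*,\theta_2^*)$ solving $\nabla\Lambda(\theta^*)=y:=(z/\alpha,2z_1/\alpha)$.

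Assembling the binomial entropy of $\binom{n}{\alpha n}$, the multinomial entropy of the edge split, and the two LDP rates, $\lim_n\frac{1}{n}\log\mathbb{E}[X(zn)]$ becomes a finite-dimensional supremum over $(\alpha,z_1,z_2,\theta)$. In the regime of interest the problem is invariant under swapping the two parts, so the optimum sits at $\alpha=1/2$ and $z_1=z_2=(c-z)/2$; combined with the antisymmetry of the event $\{B\ge C\}$ in the difference direction, the two Lagrange multipliers collapse to a single scalar $\theta$. The large-$c$ Gaussian approximation to the Poisson distributions (valid since both rates are $\Theta(c)$) then replaces the half-plane Poisson sums defining $\mathbb{P}[B\ge C]$ and $\Lambda(\theta)$ with Gaussian half-plane integrals, producing the term $\log(1+\erf(2x+\theta))$ that appears in $w_1$. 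The stationarity condition $\partial w_1/\partial\theta=0$ is then exactly \eqref{transce2}, and plugging the unique root $\theta(x)$ back into $w_1$ yields the claimed rate $w_1(x,\theta(x))=w(x)$ of \eqref{eq:wx}. Uniqueness of $\theta(x)$ over the interval \eqref{eq:xupperrange} follows from strict convexity of $\Lambda$ together with a monotonicity check on $\partial w_2/\partial\theta$.

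The main difficulty is handling the conditioning $\{B_i\ge C_i\}$ inside the two-dimensional local LDP: the conditioned distribution has no closed-form MGF, so the stationarity equations cannot be written down explicitly at fixed $c$. The saving fact is that only the leading order as $c\to\infty$ is needed, where the conditioned Poisson vector is uniformly well approximated by a conditioned Gaussian vector, which is what produces the $\erf$ terms in \eqref{transce1}--\eqref{transce2}. Checking that this Gaussian approximation is uniform enough that its error is absorbed into the $o_c(\sqrt c)$ appearing in \eqref{mc_Gc}, and confirming that the symmetric critical point $\alpha=1/2$ is a genuine maximum of the variational problem rather than merely stationary, are the delicate but essentially routine remaining steps.
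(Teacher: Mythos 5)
Your outline follows the paper's own route almost step for step: the same configuration-model decomposition $\E[X(zn)]=\sum_{\alpha,z_1,z_2}J_1J_2$, the same Poissonization/balls-into-bins reduction (Lemma~\ref{lemma_twoPo}), the local large deviations Theorem~\ref{LCLTLDP} applied to the conditioned two-dimensional lattice vector, the collapse of the two multipliers to a single $\theta$ via the symmetry of the event $\{B\ge C\}$, and the $c\to\infty$ Gaussian replacement producing the $\erf$ terms and the stationarity equation (\ref{transce2}). Up to that point the argument is sound and matches Lemmas~\ref{appro_nrv} and \ref{LPD_lemma}.

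The genuine gap is the step you set aside as ``essentially routine'': confirming that the symmetric point $\alpha=1/2$, $z_1=z_2=(c-z)/2$ is the \emph{global} maximizer of the resulting variational problem. Symmetry under swapping the two parts only shows this point is stationary, and the candidate objective is not obviously concave: after optimizing out $\alpha$ from the $J_1$ exponent one picks up the \emph{convex} term $\tfrac{2(2z_1-c+z)^2}{1+2c}$, so concavity in $z_1$ has to be verified by an explicit second-derivative computation (as in (\ref{f_deri}) and the line following it); and the $K$-exponents enter through the non-explicit function $L(a)=-\sup_\theta\left(-\theta^2-\log\left(1+\erf(\theta-a/\sqrt{2})\right)\right)$, for which the inequality $\tfrac12 L(-2\sqrt2x-\sqrt2\tau)+\tfrac12 L(-2\sqrt2x+\sqrt2\tau)\le L(-2\sqrt2x)$ requires the concavity of $L$, which the paper proves via the Pr\'ekopa--Leindler inequality (Lemma~\ref{maximize_K2d}) --- a real idea, not a routine check. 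In addition, before the Gaussian limit can be applied uniformly to the $K$-terms one needs the a priori localization $\alpha-1/2=O_c(c^{-1/2})$ coming from the quadratic form (\ref{eq:BoundJ1}); without it the $o_c(1)$ errors in the two $K$-exponents are not controlled uniformly over the sum. Your proposal asserts the conclusion of all three of these steps but supplies none of them, so as written it does not yet establish (\ref{EtildeEX}).
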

We now show this result implies the upper bound part of Theorem~\ref{main_theorem}. We will establish later in the proof of Lemma~\ref{lemma:UniqueSolution} that
that $w(x)$ is a strictly decreasing function. Thus the expression above is positive (negative) if $x$ is smaller (larger) than the solution value $x_u$, for sufficiently large $c$.
The result is then obtained by Markov inequality. The remainder of the section is devoted to the proof Proposition~\ref{prop:comp_EbarX}.

We begin with some preliminary results.
Given $0\le \alpha\le 1/2$, consider a cut with two separate vertex subsets $V_1$ and $V_2$ with sizes $(1/2+\alpha)n$ and $(1/2-\alpha)n$,
respectively. There are $n \choose {(1/2+\alpha)n}$ such cuts. Given a positive even integer $m$, let $F(m)=\frac{m!}{(m/2)! 2^{m/2}}$. This is the number of
perfect matchings on a set of $m$ nodes. From this point on in all of our computations we will ignore the roundings $\lfloor\cdot\rfloor$ as they only contribute an extra term $O(1/n)$ to $\frac{1}{n} \log \mathbb{E}\left[X (zn) \right]$.
\begin{lemma}
The expected number of cuts of size $zn$ which satisfy the local optimality condition is
\begin{align}
\mathbb{E}[X(zn)]=\sum_{z_1,z_2,\alpha}J_1J_2, \label{halfalpha}
\end{align}
where the sum runs over all non-negative $z_1,z_2$ such that $z_1n,z_2n$ are integers and $z_1+z_2=c-z$,  over all
$\alpha\in [0,1/2]$ such that $(1/2+\alpha)n,(1/2-\alpha)n$ are integers, and
\begin{align}
J_1&={n \choose {(1/2+\alpha)n}} {{2cn}  \choose {{2z_1n},  {2z_2n}, {zn}, {zn}}} ((1/2+\alpha)n)^{(2z_1+z)n} ((1/2-\alpha)n)^{(2z_2+z)n} (zn)!\times  \\
&\times F(2z_1n)F(2z_2n)n^{-2cn} (F(2cn))^{-1},
\nonumber
\end{align}
and
\begin{align*}
&J_2=K((1/2+\alpha)n , zn, 2z_1 n) K((1/2-\alpha)n , zn, 2z_2 n), \notag
\end{align*}
where $K$ is defined by (\ref{eq:Kmu_mubar}).
\end{lemma}

As the cut size $zn$ increases, the number of cuts of such a size is expected to decrease and the local optimality condition is more likely to satisfy. Based on this intuition, we expect that $J_1$ is decreasing in $z$ and $J_2$ is increasing in $z$. A lot of the work we will done later in this section is to find the right tradeoff between the two terms.

\begin{proof}
We claim that $J_1$ is the expected number of cuts $V_1,V_2$ such that the cardinality of a cut is $zn$, $|V_1|=(1/2+\alpha)n,|V_2|=(1/2-\alpha)n$
and the number of edges within each part $V_1$ and $V_2$ is $z_1n,z_2n$, respectively. Similarly, we claim that $J_2$ is the probability that the local optimality condition
is satisfied for any given cut counted in $J_1$, where
$K$ was defined in (\ref{eq:Kmu_mubar}).   

We recall that $\Gncn$ is assumed to be generated using the configuration model. Indeed
$$
{{2cn}  \choose {{2z_1n},  {2z_2n}, {zn}, {zn}}} ((1/2+\alpha)n)^{(2z_1+z)n} ((1/2-\alpha)n)^{(2z_2+z)n}
$$
is the number of ways assigning balls to vertices, such that the vertex subset $V_1$ has $2z_1 n$ balls
(for generating $z_1 n$ edges inside it by matching them later) and another $zn$ balls (for generating $zn$ edges which cross the partition),
and, similarly, the vertex subset $V_2$ has $2z_2 n$ balls (for generating $z_2 n$ edges inside it) and another $zn$ balls (for generating $zn$ edges crossing the partition).
Now $(zn)! F(2z_1n)F(2z_2n)$ is the number of ways of creating cardinality $zn$ matchings crossing parts $V_1$ and $V_2$, cardinality $z_1 n$ matchings inside $V_1$
and cardinality $z_2 n$ matchings inside $V_2$.
$n^{2cn} F(2cn)$ is the number of ways of assigning $2cn$ balls to $n$ nodes and then randomly matching on these balls
for generating a graph with $cn$ edges. Namely, it is the total number of creating a (multi-) graph on $n$ nodes with $cn$ edges.
This establishes the claim regarding the term $J_1$.

The claim for $J_2$ follows by observing that once the size $z,z_1,z_2$ are fixed, the events that
each part $V_1$ and $V_2$ satisfies the local optimality conditions are independent and their corresponding probabilities are
$K((1/2+\alpha)n , zn, 2z_1 n)$ and  $K((1/2-\alpha)n , zn, 2z_2 n)$, respectively.
\end{proof}

We have that $z_1$ and $z_2$ satisfy $z_1+z_2=c-z$. If $2z_1  \leq z$ or $ 2z_2 \leq z$ is not true, $J_2$ is $0$ and thus does not contribute to $\mathbb{E}(X(zn))$. Then we only need to consider $2z_1  \leq z$ and $ 2z_2 \leq z$. Combining with $z_1+z_2=c-z$ and $z=c/2+x\sqrt{c}$, we have
\begin{align}
\label{z1z2}
c/2-3x\sqrt{c} &\leq  2z_1 \leq  c/2+x\sqrt{c}, \\
c/2-3x\sqrt{c} &\leq  2z_2 \leq  c/2+x\sqrt{c}. \notag
\end{align}
We use the standard approximation
\begin{align}
\label{approx_1}
\frac{1}{n} \log { n \choose {a n}}=H(a)+o(1).
\end{align}
Here and  everywhere below $H(x)$ denotes the standard entropy function
$H(x)=-x\log x-(1-x)\log(1-x)$.
We also have
\begin{align}
\label{ent_m}
\frac{1}{n} \log  {{2cn}  \choose {{2z_1n},  {2z_2n}, {zn}, {zn}}} = 2(-z_1 \log z_1 - z_2 \log z_2 - z\log z + c\log c + z \log 2 )+o(1)
\end{align}
and
\begin{align}
\label{F_mat}
\frac{1}{n} \log F(an) &=\frac{1}{n} \log \left(  \frac{(an)!}{(an/2)!2^{an/2}} \right) \nonumber \\
                      &= \frac{1}{n} \log \frac{(an)^{an} e^{-an}}{(an/2)^{an/2} e^{-an/2} 2^{an/2}} + o(1) \nonumber \\
											&= (a\log (an) -a)/2 + o(1).
\end{align}
Using (\ref{approx_1}), (\ref{ent_m}), (\ref{F_mat}) and Stirling's approximation, we have
\begin{align}
\label{J_1m}
 \frac{1}{n} \log J_1 & = H(1/2+\alpha)+2(-z_1 \log z_1 - z_2 \log z_2 - z\log z + c\log c + z \log 2) \nonumber \\
                      & \quad +(2z_1+z)\log((1/2+\alpha)n)+(2z_2+z)\log((1/2-\alpha)n) +z\log(zn)-z-2c\log n \nonumber \\
                      & \quad + (2z_1\log(2z_1 n)-2z_1+2z_2\log(2z_2 n)-2z_2-2c\log(2cn)+2c)/2 +o(1)\nonumber \\
                      & = H(1/2+\alpha) + z\log(1/4-\alpha^2) + 2z_1 \log(1/2+\alpha)+2z_2 \log(1/2-\alpha) \nonumber \\
                      & \quad -z\log z-z_1 \log 2z_1 -z_2 \log 2z_2 + c \log 2c    + o(1).
\end{align}
Using Taylor expansion $\log(1+a)=a-a^2/2+o(a^2)$, the equation above is simplified by
\begin{align}
\label{optimize_2}
  \frac{1}{n} \log J_1&=\log 2-2\alpha^2+z(-\log 4 -4 \alpha^2)
   + 2z_1(-\log2+2\alpha-2\alpha^2)
         +2z_2(-\log2-2\alpha-2\alpha^2)-z\log z \notag \\
         &-z_1 \log 2z_1 -z_2 \log 2z_2 + c \log 2c
         +o_{\alpha}(\alpha^2)c+o(1)    \nonumber \\
&=\log 2-2c\log2+4(z_1-z_2)\alpha-(2+4c)\alpha^2
-z\log z -z_1 \log 2z_1 -z_2 \log 2z_2 \notag \\
&+ c \log 2c + o_{\alpha}(\alpha^2)c+o(1).
\end{align}
Similarly, using $\log(1+a) < a$ for $|a|<1$ and $H(b) \leq \log 2$ for $b \in [0, 1]$, from (\ref{J_1m}) we also have  
\begin{align}
\label{optimize_3}
  \frac{1}{n} \log J_1 \leq& \log 2 + z (-\log 4 - 4 \alpha^2 ) + 2z_1(-\log2+2\alpha)
         +2z_2(-\log2-2\alpha)-z\log z \notag \\
         &-z_1 \log 2z_1 -z_2 \log 2z_2 + c \log 2c + o(1) \\
         =& \log 2 - 2c \log 2 + 4(z_1 - z_2)\alpha - 4z \alpha^2 -z\log z \notag -z_1 \log 2z_1 -z_2 \log 2z_2 + c \log 2c + o(1) 
\end{align}
From (\ref{z1z2}), we have $z_1-z_2=O_c(\sqrt{c})$. Recall $z = c/2 + x\sqrt{c}$.
Viewing the expression above as a quadratic form in $\alpha$, the dominating term involving $\alpha$ is
\begin{align*}
O_c(\sqrt{c})\alpha-(2c + 4 x\sqrt{c})\alpha^2  
\end{align*}
as $c$ increases. Similarly we have the the dominating term involving $\alpha$ on the right hand side of (\ref{optimize_2}) 
\begin{align}\label{eq:BoundJ1}
O_c(\sqrt{c})\alpha- 4 c \alpha^2 + o_{\alpha}(\alpha^2)c
\end{align}
as $c$ increases. Observe that the right hand sides of (\ref{optimize_2}) and (\ref{optimize_3}) share the same terms which do not depend on $\alpha$. We see that $\alpha$ which maximizes asymptotically $n^{-1}\log J_1$
should satisfy
\begin{align}\label{eq:alpha_bound}
\alpha=O_c(c^{-1/2}).
\end{align}
This result is crucial to analyze the variational problem induced by the large deviation principle underlying the evaluation of $J_2$,
the evaluation of which we now turn to.

We will evaluate $K(n,\mu_1,\mu_2)$ in (\ref{Pos_approx}) using large deviations technique, which involves the moment generating function (MGF) of two
correlated Poisson random variables. Such MGF does not unfortunately have a closed form expression.
The following lemma allows us to evaluate the MGF by that of Normal distributions for the asymptotic case $c\rightarrow \infty$.
\begin{lemma}
\label{appro_nrv}
Suppose $\lambda_1=\Theta_c(c)$, $\lambda_2=\lambda_1-\Theta_c(\sqrt{c})$, $a_c=\lambda_2/\lambda_1$, $b_c=(\lambda_2-\lambda_1)/\sqrt{\lambda_1}$.
Suppose also the limit $b=\lim_{c\rightarrow \infty}b_c$ exists.
Let $B\distr\Pois(\lambda_1)$ and $C\distr\Pois(\lambda_2)$ be two independent Poisson random variables,
and let
$X_1$ and $X_2$ be two independent standard Normal random variables. Let
\begin{align*}
(U_c,V_c)\triangleq \left(\frac{B-\lambda_1}{\sqrt{\lambda_1}},\frac{C-\lambda_2}{\sqrt{\lambda_2}}\right).
\end{align*}
For every fixed $\theta_1,\theta_2$
\begin{align}
\label{appro_mgf}
\lim_{c\rightarrow\infty}\mathbb{E}[\exp(U_c\theta_1+V_c\theta_2) \mid U_c \geq \sqrt{a_c}V_c+b_c ] =
\mathbb{E}[\exp(X_1\theta_1+X_2 \theta_2) \mid X_1 \geq X_2+b].
\end{align}
\end{lemma}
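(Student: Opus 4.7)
The plan is to reduce this to a standard combination of convergence in distribution plus uniform integrability. Writing the conditional expectation as a ratio,
\begin{align*}
\mathbb{E}[\exp(U_c\theta_1+V_c\theta_2) \mid U_c \geq \sqrt{a_c}V_c+b_c ]
= \frac{\mathbb{E}\!\left[\exp(U_c\theta_1+V_c\theta_2)\,\mathbf{1}\{U_c \geq \sqrt{a_c}V_c+b_c\}\right]}{\mathbb{P}[U_c \geq \sqrt{a_c}V_c+b_c]},
\end{align*}
I would treat numerator and denominator separately, aiming to show each converges to the analogous quantity in the Gaussian limit.

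The first step is joint weak convergence. Since $B$ and $C$ are independent with $\lambda_1,\lambda_2\to\infty$, the classical Central Limit Theorem for Poisson random variables (or, equivalently, Theorem~\ref{LCLT} applied to sums of indicators) gives $(U_c,V_c)\Rightarrow (X_1,X_2)$ as $c\to\infty$. Combining with $\sqrt{a_c}\to 1$ and $b_c\to b$ (both deterministic), the vector $(U_c,V_c,\sqrt{a_c},b_c)$ converges jointly in distribution to $(X_1,X_2,1,b)$. The map $(u,v,\alpha,\beta)\mapsto \mathbf{1}\{u\geq \alpha v+\beta\}$ is discontinuous only on the hyperplane $\{u=\alpha v+\beta\}$, which has probability zero under the limit law since $X_1-X_2-b$ is a non-degenerate continuous random variable. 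The continuous mapping theorem then yields
\begin{align*}
\mathbb{P}[U_c \geq \sqrt{a_c}V_c+b_c]\to \mathbb{P}[X_1\geq X_2+b],
\end{align*}
handling the denominator, and also gives the analogous weak convergence of the product random variable appearing in the numerator.

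The second step is uniform integrability of $\{\exp(\theta_1 U_c+\theta_2 V_c)\}_c$. By independence of $B$ and $C$, for any $s\in\mathbb{R}$,
\begin{align*}
\mathbb{E}\!\left[e^{s U_c}\right]=\exp\!\left(\lambda_1\bigl(e^{s/\sqrt{\lambda_1}}-1\bigr)-s\sqrt{\lambda_1}\right)
=\exp\!\left(\tfrac{s^2}{2}+O(\lambda_1^{-1/2})\right),
\end{align*}
with an analogous identity for $V_c$. Hence, for any $\epsilon>0$,
\begin{align*}
\sup_c \mathbb{E}\!\left[\exp\bigl((1+\epsilon)(\theta_1 U_c+\theta_2 V_c)\bigr)\right]
=\sup_c \mathbb{E}\!\left[e^{(1+\epsilon)\theta_1 U_c}\right]\mathbb{E}\!\left[e^{(1+\epsilon)\theta_2 V_c}\right]<\infty,
\end{align*}
which establishes uniform integrability via the standard $L^{1+\epsilon}$-boundedness criterion.

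Combining convergence in distribution of $\exp(\theta_1 U_c+\theta_2 V_c)\,\mathbf{1}\{U_c\geq \sqrt{a_c} V_c+b_c\}$ with uniform integrability, the expectation of the numerator converges to $\mathbb{E}[\exp(\theta_1 X_1+\theta_2 X_2)\mathbf{1}\{X_1\geq X_2+b\}]$; dividing by the limit of the denominator (which is strictly positive) gives the claim. The main technical point is that the conditioning event is given by an affine inequality with coefficients themselves depending on $c$; this is exactly why we need the joint weak convergence of $(U_c,V_c,\sqrt{a_c},b_c)$ rather than just of $(U_c,V_c)$, and why we must check that the limiting hyperplane is a continuity set of the limit law. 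Once these two ingredients -- joint weak convergence of the coupled quantities and a uniform sub-exponential tail estimate on the Poisson MGF -- are in place, the convergence of the conditional MGF is automatic.
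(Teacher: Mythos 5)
Your proposal is correct and follows essentially the same route as the paper, which disposes of this lemma in one line by invoking weak convergence of $(U_c,V_c)$ to $(X_1,X_2)$ together with uniform integrability; your write-up simply supplies the details the paper leaves implicit (the ratio decomposition, joint convergence with the deterministic coefficients $\sqrt{a_c},b_c$, the zero-probability boundary for the continuous mapping theorem, and the $L^{1+\epsilon}$ bound via the Poisson MGF). No gaps.
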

\begin{proof}
We have that $(U_c,V_c)$ converges in distribution to $(X_1,X_2)$ as $c\rightarrow\infty$. The result then follows by observing
uniform integrability of $(U_c,V_c)$ as $c\rightarrow\infty$, which implies convergence in expectation.
\end{proof}

Applying large deviations estimation of Theorem \ref{LCLTLDP} and Lemma \ref{appro_nrv},
we compute the conditional probability underlying $K(n,\mu_1,\mu_2)$ as follows.
\begin{lemma}\label{LPD_lemma}
Suppose $\mu_j=\mu_j(n,c), j=1,2$  are positive integer sequences such that $\lambda_j=\lim_n\mu_j/n, j=1,2$ exist for every $c$, take rational values and satisfy
$\lambda_1=\Theta_c(c), \lambda_2=\lambda_1-\Theta_c(\sqrt{c})$.
Suppose further that the limit $b=\lim_{c\rightarrow \infty} \frac{\lambda_2-\lambda_1}{\sqrt{\lambda_1}}$ exists and satisfies $b<0$.
Let $B_i,C_i,1\leq i \leq n$ be i.i.d. Poisson random variables with mean $\E[B_i]=\lambda_1,\E[C_i]=\lambda_2$. Then
\begin{align}
\label{P_BiBbari}
& \lim_{n\rightarrow \infty} \frac{1}{n} \log \mathbb{P}\left[\sum_{i=1}^{n} B_i=\mu_1, \sum_{i=1}^{n} C_i=\mu_2  \Bigm\vert B_i \geq C_i, 1\leq i\leq n \right]  \\
\label{sup_charac_1}
& =  -\log(2 P_1)-\sup_{\theta \in \mathbb{R}}(-\theta^2-\log(1+\erf(\theta-b/2)))+o_c(1)
\end{align}
where $P_1=\mathbb{P}[X_1\geq X_2+b]$, $X_1$ and $X_2$ are two independent standard normal random variables.
Furthermore, the equation (\ref{transce2}) has a unique solution for any $x>0$ and (\ref{sup_charac_1}) can be rewritten by
\begin{align}
\label{pro_bloc}
  -\log(2P_1) +(\theta^*)^2+ \log(1+\erf(\theta^{*}-b/2)) +o_c(1),
\end{align}
where $\theta^*$ is the unique solution to (\ref{transce2}) for $x=-b/4$.
\end{lemma}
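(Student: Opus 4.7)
The plan is to view the conditional probability in question as a local large-deviation probability for a sum of iid random vectors under the product conditional measure, apply Theorem~\ref{LCLTLDP}, and then use Lemma~\ref{appro_nrv} to replace the (intractable) conditional Poisson log-MGF by the conditional Gaussian log-MGF, with an $o_c(1)$ error. Concretely, since conditioning on $\{B_i\ge C_i\}$ acts coordinate-wise, the pairs $(B_i,C_i)$ remain iid under the conditional law $\tilde\nu_c$. After centering/normalizing to $(\tilde B_i,\tilde C_i)=((B_i-\lambda_1)/\sqrt{\lambda_1},(C_i-\lambda_2)/\sqrt{\lambda_2})$, the target event becomes $\{\sum\tilde B_i=0,\;\sum\tilde C_i=0\}$, and the conditioning rewrites as $\tilde B_i\ge\sqrt{a_c}\tilde C_i+b_c$. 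The integer lattice hypothesis of Theorem~\ref{LCLTLDP} is inherited from the Poisson pairs, $\mathcal D_\Lambda=\R^2$ holds because all Poisson moments are finite, and rationality of $\lambda_j$ guarantees $\sqrt n\,y\in\mathcal L_n$ along a subsequence, so the theorem applies and gives, for each fixed $c$,
\begin{equation*}
\lim_{n\to\infty}\frac{1}{n}\log\pr\Bigl[{\textstyle\sum}B_i=\mu_1,\,{\textstyle\sum}C_i=\mu_2\,\Bigm|\,B_i\ge C_i\,\forall i\Bigr]=\min_{\theta\in\R^2}\tilde\Lambda_c(\theta),
\end{equation*}
where $\tilde\Lambda_c$ is the log-MGF of $\tilde\nu_c$ (after centering), and the minimum is attained at the unique zero of $\nabla\tilde\Lambda_c$ by convexity.

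Next I would pass to the Gaussian limit. By Lemma~\ref{appro_nrv}, for every fixed $\theta\in\R^2$ one has $\tilde\Lambda_c(\theta)\to\tilde\Lambda_\infty(\theta)$, where
\begin{equation*}
\tilde\Lambda_\infty(\theta_1,\theta_2)=\log\E\bigl[\exp(\theta_1 X_1+\theta_2 X_2)\,\bigm|\,X_1\ge X_2+b\bigr].
\end{equation*}
A direct Gaussian calculation (complete the square in $x_1,x_2$, translate, and observe that $X_1-X_2\sim N(0,2)$) yields the closed form
\begin{equation*}
\tilde\Lambda_\infty(\theta_1,\theta_2)=-\log(2P_1)+\tfrac{1}{2}(\theta_1^2+\theta_2^2)+\log\bigl(1-\erf((b-\theta_1+\theta_2)/2)\bigr).
\end{equation*}
The first-order conditions $\partial_{\theta_1}\tilde\Lambda_\infty=0$ and $\partial_{\theta_2}\tilde\Lambda_\infty=0$ immediately force $\theta_1+\theta_2=0$, so along the line $\theta_2=-\theta_1=-\theta$ the problem reduces (using $1-\erf((b-2\theta)/2)=1+\erf(\theta-b/2)$ and $(2\theta-b)^2/4=(\theta-b/2)^2$) to minimizing
\begin{equation*}
g(\theta)=\theta^2+\log\bigl(1+\erf(\theta-b/2)\bigr),
\end{equation*}
and the stationary equation $g'(\theta)=0$ is exactly (\ref{transce2}) with the identification $2x=-b/2$, i.e.\ $x=-b/4$. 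Substituting the minimizer back then gives the claimed expression~(\ref{pro_bloc}), which coincides with the $\sup$-formulation~(\ref{sup_charac_1}).

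For uniqueness of solutions to~(\ref{transce2}) when $x>0$, I would argue that $\tilde\Lambda_\infty$ is strictly convex on $\R^2$ (being a log-MGF with non-singular conditional covariance), hence its restriction $g$ to the line $\theta_1+\theta_2=0$ is strictly convex in $\theta$; combined with $g(\theta)\to+\infty$ as $\theta\to+\infty$ (from the $\theta^2$ term) and as $\theta\to-\infty$ (using $1+\erf(u)\sim e^{-u^2}/(|u|\sqrt{\pi})$ so that $\theta^2-(\theta-b/2)^2=b\theta-b^2/4\to+\infty$ since $b<0$), we obtain a unique global minimizer, hence a unique zero of $g'$, i.e.\ a unique solution of $w_2(x,\theta)=0$ for each $x=-b/4>0$.

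The main obstacle I foresee is justifying the interchange of the $n\to\infty$ and $c\to\infty$ limits: Theorem~\ref{LCLTLDP} only yields the exact rate $\min_\theta\tilde\Lambda_c(\theta)$ for each fixed $c$, but we need $\min_\theta\tilde\Lambda_c(\theta)=\min_\theta\tilde\Lambda_\infty(\theta)+o_c(1)$. Lemma~\ref{appro_nrv} only gives pointwise convergence of MGFs, so some uniform control is required. I would handle this by restricting the $\inf$ to a large fixed compact set (using coercivity of $g$ to control the tails of $\tilde\Lambda_c$ uniformly in $c$ via the common $\theta^2/2$ lower bound coming from $\tfrac12 \mathrm{Var}$-type estimates valid for large $c$), then invoke uniform convergence of $\tilde\Lambda_c$ to $\tilde\Lambda_\infty$ on compacts (which follows from the uniform integrability argument already used in the proof of Lemma~\ref{appro_nrv}), and finally conclude convergence of the minima together with convergence of the minimizers to the unique $\theta^*$ characterized by~(\ref{transce2}).
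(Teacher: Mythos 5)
Your proposal is correct and follows essentially the same route as the paper: normalize and pass to the conditional iid pairs, apply Theorem~\ref{LCLTLDP} to reduce to minimizing the conditional log-MGF, invoke Lemma~\ref{appro_nrv} to replace it by the Gaussian conditional log-MGF up to $o_c(1)$, use the first-order conditions to force $\theta_2=-\theta_1$ and identify the stationarity equation with (\ref{transce2}) at $x=-b/4$, with uniqueness from strict convexity. The only difference is that you explicitly flag and sketch a fix for the $n\to\infty$ versus $c\to\infty$ interchange (compactness plus uniform convergence of the MGFs), a point the paper handles only implicitly by carrying the pointwise $o_c(1)$ through the supremum, so your treatment is if anything slightly more careful.
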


\begin{proof}[Proof of Lemma~\ref{LPD_lemma}]
Let $U_i=\frac{B_i-\lambda_1}{\sqrt{\lambda_1}}$ and $V_i=\frac{C_i-\lambda_2}{\sqrt{\lambda_2}}$. In order to use Lemma \ref{appro_nrv} to approximate the MGF involved in the computation of (\ref{P_BiBbari}), we rewrite the probability term in (\ref{P_BiBbari}) by
\begin{align}
\label{new_rv2}
\mathbb{P}\left[\sum_{i=1}^n U_i=0, \sum_{i=1}^n V_i=0 \middle|  U_i \geq \sqrt{\frac{\lambda_2}{\lambda_1}} V_i+ \frac{\lambda_2-\lambda_1}{\sqrt{\lambda_1}}, 1\leq i\leq n  \right].
\end{align}
Conditional on $U_i \geq \sqrt{\frac{\lambda_2}{\lambda_1}} V_i+ \frac{\lambda_2-\lambda_1}{\sqrt{\lambda_1}}$, the joint distribution of $(U_i, V_i)$, $1\leq i \leq n$,
defines a new sequence of i.i.d. random variables $(\tilde U_i,\tilde V_i) \in \mathbb{R}^2$, where $(\tilde U_i,\tilde V_i)$ have the distribution
of $(U_i,V_i)$ conditional on $U_i \geq \sqrt{\frac{\lambda_2}{\lambda_1}} V_i+ \frac{\lambda_2-\lambda_1}{\sqrt{\lambda_1}}$.
Since $\lambda_i$ take rational values, $(0,0)$ belongs to the lattice $\mathcal{L}_n$ supporting $(U_i,V_i)$ for all sufficiently large $n$.
Applying Theorem \ref{LCLTLDP} we have
\begin{align*}
\lim_{n\rightarrow \infty} &\frac{1}{n} \log \mathbb{P}\left[\sum_{i=1}^n U_i=0, \sum_{i=1}^n V_i=0 \middle|  U_i \geq \sqrt{\frac{\lambda_2}{\lambda_1}} V_i+ \frac{\lambda_2-\lambda_1}{\sqrt{\lambda_1}}, 1\leq i\leq n  \right] \\
&=\lim_{n\rightarrow \infty} \frac{1}{n} \log \mathbb{P}\left[\sum_{i=1}^n \tilde U_i=0, \sum_{i=1}^n \tilde V_i=0\right] \\
&= - I(0,0),
\end{align*}
where $I(x_1,x_2)=\sup_{(\theta_1,\theta_2) \in \mathbb{R}^2} (\theta_1 x_1+\theta_2 x_2-\log(M(\theta_1,\theta_2))$ is the rate function, and $M(\theta_1,\theta_2)$ is the MGF of the newly defined random variables $(\tilde U_i,\tilde V_i)$. For $c$ sufficiently large, Lemma \ref{appro_nrv} yields
\begin{align}
& M(\theta_1,\theta_2) =\mathbb{E}[\exp(\theta_1 \tilde U_1+\theta_2 \tilde V_1)]
=\mathbb{E}\left[\exp(\theta_1 U_1+\theta_2 U_2) \mid U_1 \geq \sqrt{\frac{\lambda_2}{\lambda_1}} U_2+ \frac{\lambda_2-\lambda_1}{\sqrt{\lambda_1}}\right] \nonumber \\
\label{MGF2_0}
                     &=    \frac{1}{P_1} \iint \limits_{t_1 \geq t_2 + b} \,  \frac{1}{2\pi} \exp(\theta_1 t_1 + \theta_2 t_2) \exp\left( -\frac{t_1^2+t_2^2}{2} \right) d t_1\,d t_2+o_c(1)   \\
										\label{MGF2_1}
										&=\frac{\exp(\frac{\theta_1^2+\theta_2^2}{2})}{P_1} \iint \limits_{t_1 \geq t_2+b} \,  \frac{1}{2\pi} \exp \left(-\frac{(t_1-\theta_1)^2}{2}-\frac{(t_2-\theta_2)^2}{2}  \right) d t_1\,d t_2+o_c(1)  \\
										\label{MGF2_2}
&=\frac{\exp(\frac{\theta_1^2+\theta_2^2}{2})}{P_1} \iint \limits_{\mathcal{D}_1} \, \frac{1}{2\pi} \exp(-\frac{\bar{t}_1^2+\bar{t}_2^2}{2}) d \bar{t}_1\,d \bar{t}_2+o_c(1)   \\
                    \label{MGF2_2_1}
&=\frac{\exp(\frac{\theta_1^2+\theta_2^2}{2})}{P_1} \int_{\frac{\theta_2-\theta_1+b}{\sqrt{2}}}^{\infty} \, \frac{1}{\sqrt{2\pi}} \exp \left(-\frac{t^2}{2}\right) d t +o_c(1)  \\
                    \label{MGF2_3}
&=\frac{\exp(\frac{\theta_1^2+\theta_2^2}{2})}{P_1} \frac{1+\erf \left( \frac{\theta_1-\theta_2-b}{2} \right)}{2}+o_c(1),
\end{align}
where from (\ref{MGF2_1}) to (\ref{MGF2_2}) we have used the change of variables $\bar{t}_1=t_1-\theta_1$ and $\bar{t}_2=t_2-\theta_2$ to simplify the integral, and $\mathcal{D}_1$ is
$$
\mathcal{D}_1=\{(\bar{t}_1,\bar{t}_2): \bar{t}_1 \geq \bar{t}_2 + \theta_2-\theta_1+b \}.
$$
Then we have
\begin{align}
\label{logMGF}
\log M(\theta_1,\theta_2)=\frac{\theta_1^2+\theta_2^2}{2}-\log (2P_1) + \log \left(1+\erf \left( \frac{\theta_1-\theta_2-b}{2} \right) \right)+o_c(1),
\end{align}
and the large deviations rate function valued at $(0,0)$ is
$$
I(0, 0)=\sup_{(\theta_1,\theta_2) \in \mathbb{R}^2} \{ - \log M(\theta_1,\theta_2) \}
$$
From (\ref{logMGF}), we have that $\log M(\theta_1,\theta_2)<\infty$ for any $(\theta_1,\theta_2) \in \R^2$.
This implies (see Exercise 2.2.24 and its hint in~\cite{demzei98}) that $\log M(\theta_1,\theta_2)$ is a strictly convex function and its minimum is achieved at a unique
point $\theta^*=(\theta_1^*,\theta_2^*)$ at which the gradient of $\log M(\theta_1,\theta_2)$ vanishes. Namely, we have at $\theta^*$
\begin{align}
\label{cond1}
 \frac{\partial \log M(\theta_1,\theta_2)}{\partial \theta_1}=0 &\Rightarrow  \theta_1 + \frac{1}{1+\erf((\theta_1-\theta_2-b)/2)}
 \frac{\partial (\erf((\theta_1-\theta_2-b)/2))}{\partial \theta_1} =0 \nonumber \\
& \Rightarrow  \theta_1 + \sqrt{\frac{1}{\pi}} \frac{e^{-(\theta_1-\theta_2-b)^2/4}}{1+\erf((\theta_1-\theta_2-b)/2)} =0.
\end{align}
Likewise, we have
\begin{align}
\label{cond2}
& \frac{\partial \log M(\theta_1,\theta_2)}{\partial \theta_2}=0 \Rightarrow \theta_2  - \sqrt{\frac{1}{\pi}}
\frac{e^{-(\theta_1-\theta_2-b)^2/4}}{1+\erf((\theta_1-\theta_2-b)/2)}=0.
\end{align}
From (\ref{cond1}) and (\ref{cond2}), we observe that $\theta_2=-\theta_1$. Then (\ref{cond1}) and (\ref{cond2}) becomes the same equation, which
can be rewritten by (\ref{transce2}) with $x=-b/4$. Hence (\ref{transce2}) has a unique solution for any $x>0$. Let the unique solution for $x=-b/4$ be $\theta^*$.
The rate funtion $I(\cdot,\cdot)$ at $(0,0)$ is
\begin{align}
I(0,0)&=\sup_{\theta \in \mathbb{R}}(-\theta^2-\log(1+\erf(\theta-b/2)))+\log(2P_1) \\
&=-(\theta^*)^2-\log(1+\erf(\theta^*-b/2))+\log(2P_1).
\end{align}
\end{proof}
We now introduce the following form of  reverse  H\"{o}lder's inequality~\cite{gardner2002brunn}.
\begin{lemma}[Pr\'{e}kopa--Leindler inequality]
Let $\lambda \in (0,1)$ and let $f$, $g$, $h: \mathbb{R}^n \rightarrow [0, +\infty)$ be non-negative real-valued measurable functions defined on $\mathbb{R}^n$. Suppose that these functions satisfy
$$
h((1-\lambda)x+\lambda y) \geq f(x)^{1-\lambda}g(y)^{\lambda}
$$
for all $x$ and $y$ in $\mathbb{R}^n$. Then
$$
\|h\|_1=\int_{\mathbb{R}^n} h(x)dx \geq \left( \int_{\mathbb{R}^n} f(x) dx \right)^{1-\lambda} \left( \int_{\mathbb{R}^n} g(x) dx \right)^{\lambda}= \| f \|_1^{1-\lambda} \|g\|_1^{\lambda}
$$
\end{lemma}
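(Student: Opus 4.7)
The plan is to prove the inequality by induction on the dimension $n$, reducing the multidimensional statement to the one-dimensional one via Fubini and a slicing argument. The whole scheme rests on level sets converting the multiplicative functional hypothesis into the additive Brunn--Minkowski inequality on $\mathbb{R}$.

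First I would settle the one-dimensional case $n=1$. After the homogeneity normalization $F = f/\|f\|_\infty$, $G = g/\|g\|_\infty$, and $H = h/(\|f\|_\infty^{1-\lambda}\|g\|_\infty^{\lambda})$ (assuming $f,g \not\equiv 0$; the degenerate case is trivial), the hypothesis is preserved and $F, G \leq 1$. For every $t \in (0,1)$, if $F(x) > t$ and $G(y) > t$, then $H((1-\lambda)x + \lambda y) \geq F(x)^{1-\lambda}G(y)^\lambda > t$, so
\begin{equation*}
\{H > t\} \supseteq (1-\lambda)\{F > t\} + \lambda \{G > t\}.
\end{equation*}
The one-dimensional Brunn--Minkowski inequality, $|A + B| \geq |A| + |B|$ for nonempty measurable $A, B \subseteq \mathbb{R}$ (an elementary consequence of translating $A$ so that $\sup A = \inf B$), applied to scaled copies of the level sets gives $|\{H > t\}| \geq (1-\lambda)|\{F > t\}| + \lambda |\{G > t\}|$. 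Integrating over $t \in (0,1)$ by the layer cake formula and then applying AM--GM yields
\begin{equation*}
\|H\|_1 = \int_0^1 |\{H > t\}|\,dt \geq (1-\lambda)\|F\|_1 + \lambda\|G\|_1 \geq \|F\|_1^{1-\lambda}\|G\|_1^\lambda,
\end{equation*}
and unwinding the normalization recovers the $n=1$ statement.

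For the inductive step, fix $n \geq 2$ and suppose the inequality holds in dimension $n-1$. Writing $x = (x', s) \in \mathbb{R}^{n-1} \times \mathbb{R}$, define the slices $f_s(x') = f(x', s)$, $g_s(x') = g(x', s)$, $h_s(x') = h(x', s)$. For any $s_1, s_2 \in \mathbb{R}$ and $x', y' \in \mathbb{R}^{n-1}$, setting $s = (1-\lambda)s_1 + \lambda s_2$, the hypothesis applied to $(x', s_1)$ and $(y', s_2)$ gives
\begin{equation*}
h_s\bigl((1-\lambda)x' + \lambda y'\bigr) \geq f_{s_1}(x')^{1-\lambda} g_{s_2}(y')^{\lambda},
\end{equation*}
so the triple $(h_s, f_{s_1}, g_{s_2})$ satisfies the Pr\'ekopa--Leindler hypothesis in dimension $n-1$. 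The inductive hypothesis yields $\|h_s\|_1 \geq \|f_{s_1}\|_1^{1-\lambda}\|g_{s_2}\|_1^{\lambda}$. Defining $\Phi(s) = \|h_s\|_1$, $\Psi(s_1) = \|f_{s_1}\|_1$, $\Xi(s_2) = \|g_{s_2}\|_1$ on $\mathbb{R}$, this is precisely the one-dimensional hypothesis for $(\Phi, \Psi, \Xi)$; applying the already-proved case $n=1$ gives $\|\Phi\|_1 \geq \|\Psi\|_1^{1-\lambda}\|\Xi\|_1^{\lambda}$, and Fubini identifies the three outer integrals with $\|h\|_1, \|f\|_1, \|g\|_1$, closing the induction.

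The main obstacle is the base case, specifically the level-set reduction and the 1D Brunn--Minkowski step that converts a multiplicative pointwise inequality into an additive lower bound on the measure of a Minkowski sum; the AM--GM at the end then bridges to the geometric mean on the right-hand side of the target. A small technical point to be careful about in the induction is measurability of the slice integrals $s \mapsto \|f_s\|_1$, which follows from Fubini applied to the indicators of the super-level sets of $f, g, h$; a related subtlety is that Minkowski sums of measurable sets need not be measurable, but this can be handled by passing to inner Lebesgue measure throughout, which suffices for all the lower bounds used above.
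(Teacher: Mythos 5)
The paper does not prove this lemma at all: it is imported as a classical result with a citation to Gardner's Brunn--Minkowski survey, so there is no internal proof to compare against. Your argument is the standard textbook proof (essentially the one in that survey): the $n=1$ case via super-level sets, the inclusion $\{H>t\}\supseteq(1-\lambda)\{F>t\}+\lambda\{G>t\}$, the one-dimensional Brunn--Minkowski inequality $|A+B|\ge|A|+|B|$ plus weighted AM--GM, and then induction on the dimension through slicing and Fubini--Tonelli; this is correct. Two small points to tidy up. First, the normalization by $\|f\|_\infty$ and $\|g\|_\infty$ presupposes these are finite (and nonzero), so for unbounded $f,g$ you should first truncate, replacing $f$ by $f\wedge M$ and $g$ by $g\wedge M$, which still satisfy the hypothesis with the same $h$, and then let $M\to\infty$ by monotone convergence. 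Second, since $H$ need not be bounded by $1$, the layer-cake step should be the inequality $\|H\|_1\ge\int_0^1|\{H>t\}|\,dt$ rather than an equality; the chain of estimates goes through unchanged. Your remarks on using inner measure for the (possibly nonmeasurable) Minkowski sum and on the measurability of the slice integrals via Tonelli correctly dispose of the remaining measure-theoretic subtleties, and the induction step, which applies the $(n-1)$-dimensional statement to the slices and then the one-dimensional statement to $\Phi,\Psi,\Xi$, is exactly right.
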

\begin{lemma}
\label{maximize_K2d}
Let
\begin{align}
L(a)=-\sup_{\theta \in \mathbb{R}} \left( -\theta^2 - \log \left( 1+\erf(\theta-a/\sqrt{2})\right)  \right).
\end{align}
Then $L(a)$ is a concave function for $a\in \mathbb{R}$.
\end{lemma}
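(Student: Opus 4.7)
The key observation is that $L(a)=\inf_{\theta\in\mathbb{R}}\bigl[\theta^2 + \log(1+\erf(\theta-a/\sqrt{2}))\bigr]$, so $L$ is expressed as a pointwise infimum over $\theta$ of functions of $a$. Since the pointwise infimum of concave functions is concave, it suffices to verify that for each fixed $\theta$ the map $a\mapsto \theta^2+\log(1+\erf(\theta-a/\sqrt{2}))$ is concave in $a$. The summand $\theta^2$ is constant in $a$, and $a\mapsto \theta-a/\sqrt{2}$ is affine, so concavity in $a$ reduces to concavity of $\psi(s):=\log(1+\erf(s))$ in $s\in\mathbb{R}$.

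I would establish the concavity of $\psi$ via the Pr\'{e}kopa--Leindler inequality stated just above. Write
\[
1+\erf(s) \;=\; \frac{2}{\sqrt{\pi}}\int_{\mathbb{R}} F(t,s)\,dt, \qquad F(t,s)\;:=\;e^{-t^2}\,\mathbf{1}\{t\le s\}.
\]
The function $F$ is log-concave on $\mathbb{R}^2$: the factor $e^{-t^2}$ is log-concave because $-t^2$ is concave, the factor $\mathbf{1}\{t\le s\}$ is the indicator of a half-plane (a convex set) and hence log-concave, and products of log-concave functions are log-concave. Applying Pr\'{e}kopa--Leindler to $f(t):=F(t,s_1)$, $g(t):=F(t,s_2)$, and $h(t):=F(t,(1-\lambda)s_1+\lambda s_2)$---log-concavity of $F$ delivers the required pointwise inequality $h((1-\lambda)t_1+\lambda t_2)\ge f(t_1)^{1-\lambda}g(t_2)^{\lambda}$---yields
\[
\int_{\mathbb{R}} h(t)\,dt \;\ge\; \Bigl(\int_{\mathbb{R}} f(t)\,dt\Bigr)^{1-\lambda}\Bigl(\int_{\mathbb{R}} g(t)\,dt\Bigr)^{\lambda},
\]
which is precisely log-concavity of the marginal $s\mapsto \int_{\mathbb{R}} F(t,s)\,dt$. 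Hence $1+\erf(s)$ is log-concave in $s$, so $\psi$ is concave, and the reduction in the first paragraph finishes the argument.

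The proof is essentially an application of the standard fact that marginalization preserves log-concavity; the only point requiring verification is the log-concavity of $F$ on $\mathbb{R}^2$, which is immediate as a product of two log-concave functions. A purely analytic alternative would verify $\psi''(s)\le 0$ directly, reducing it to the inequality $2s(1+\erf(s))+\tfrac{2}{\sqrt{\pi}}e^{-s^2}\ge 0$; this is trivial for $s\ge 0$ and requires a Mills-ratio expansion for $s<0$, so the Pr\'{e}kopa--Leindler route is cleaner and I would prefer it. No issue arises from $L$ possibly taking the value $-\infty$ on part of $\mathbb{R}$, since concavity in the extended-real sense is preserved by the infimum construction.
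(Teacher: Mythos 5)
Your proof is correct and follows essentially the same route as the paper: for each fixed $\theta$ you establish concavity of $a\mapsto\log\left(1+\erf(\theta-a/\sqrt{2})\right)$ via Pr\'{e}kopa--Leindler and then use that an infimum of concave functions is concave (dually, the paper takes a supremum of convex functions). The only cosmetic difference is that you encode the $s$-dependence in the indicator $\mathbf{1}\{t\le s\}$ and invoke log-concavity of the marginal, whereas the paper shifts the Gaussian and integrates over the fixed half-line $[0,\infty)$ --- the two are identical up to a change of variables.
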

\begin{proof}
The function $f(t)=1/\sqrt{2\pi}\exp(-t^2/2)$ is log-concave,
which implies that for all $\theta \in \mathbb{R},a_1, a_2 \in \mathbb{R},t_1, t_2 \in \mathbb{R}$ and $\lambda \in (0,1)$, we have
\begin{align}
\label{log_concave_f}
& f(\lambda(t_1+a_1-\sqrt{2}\theta)+(1-\lambda)(t_2+a_2-\sqrt{2}\theta)) \nonumber \\
& \geq f^{\lambda}(t_1+a_1-\sqrt{2}\theta)f^{1-\lambda}(t_2+a_2-\sqrt{2}\theta).
\end{align}
Let
\begin{align}
\label{hg1_g2}
& h(t) = f(t+(\lambda a_1 + (1-\lambda)a_2)-\sqrt{2}\theta)  \mathbf{1}_{ t\geq 0 },  \nonumber \\
& g_1(t) = f(t+a_1-\sqrt{2}\theta) \mathbf{1}_{t \geq 0}, \nonumber \\
& g_2(t) = f(t+a_2-\sqrt{2}\theta)\mathbf{1}_{ t\geq 0 },
\end{align}
where $\mathbf{1}_{ t\geq 0 }$ is the indicator function. $h(t), g_1(t)$ and $g_2(t)$ are non-negative functions, which by (\ref{log_concave_f}) satisfy
$$
h(\lambda t_1 + (1-\lambda) t_2) \geq g_1^{\lambda}(t_1)g_2^{1-\lambda}(t_2),
$$
for any $t_1, t_2 \in \mathbb{R}$. Then Pr\'{e}kopa--Leindler inequality gives that
\begin{align}
& \int_{\mathbb{R}} h(t) dt \geq  \left(\int_{\mathbb{R}} g_1(t) dt \right)^{\lambda} \left(\int_{\mathbb{R}} g_2(t) dt \right)^{1-\lambda}. \nonumber
\end{align}
Namely
\begin{align}
 &  \int_0^{\infty} f(t+(\lambda a_1 + (1-\lambda)a_2)-\sqrt{2}\theta) dt  \nonumber \\
& \geq \left(\int_0^{\infty} f(t+a_1-\sqrt{2}\theta) dt \right)^{\lambda} \left(\int_0^{\infty} f(t_1+a_2-\sqrt{2}\theta) dt \right)^{1-\lambda} \nonumber
\end{align}
Taking  $-\log$ of both sides we have
\begin{align}
& -\log \left(\int_0^{\infty} f(t+(\lambda a_1 + (1-\lambda) a_2)-\sqrt{2}\theta) dt \right) \nonumber  \\
& \leq \lambda \left(-\log \left(\int_0^{\infty} f(t+a_1-\sqrt{2}\theta) dt \right) \right) + (1-\lambda) \left(-\log \left( \int_0^{\infty} f(t+a_2-\sqrt{2}\theta) dt \right) \right)
\end{align}
which yields that for any $\theta \in \mathbb{R}$,
$$
-\log \left(\int_0^{\infty} f(t+a-\sqrt{2}\theta) dt\right)=-\log (1+\erf(\theta-a/\sqrt{2}))+\log 2
$$
is a convex function in $a$. Since the pointwise supremum of convex functions is convex, we obtain that
$$
\sup_{\theta\in \mathbb{R}} \left(-\theta^2-\log (1+\erf(\theta-a/\sqrt{2})) \right)
$$
is also a convex function in $a$, and then the concavity of $L(a)$ follows.
\end{proof}

With Lemmas \ref{lemma_twoPo}, \ref{LPD_lemma} and \ref{maximize_K2d}, we are now ready to consider the problem of maximizing $J_2$ over $z_1$ and $z_2$
and complete the proof of Proposition~\ref{prop:comp_EbarX}.

\begin{proof}[Proof of Proposition~\ref{prop:comp_EbarX}]
Introduce $\tau$ by
$$
2z_1=c/2-(x+\tau)\sqrt{c} ,\; 2z_2=c/2-(x-\tau)\sqrt{c}\;
$$
From (\ref{z1z2}) we can restrict $\tau$ to be in the range $\tau \in(-2x,2x)$.
Then
$$
\frac{\frac{2z_1}{(1/2+\alpha)}-\frac{z}{(1/2+\alpha)}}{\sqrt{\frac{z }{(1/2+\alpha)}}}= -{4x -2\tau \over \sqrt{1/2+\alpha}}.
$$
First applying (\ref{Pos_approx}) in Lemma \ref{lemma_twoPo}, and then (\ref{sup_charac_1}) in Lemma \ref{LPD_lemma} yields
\begin{align*}
 & \lim_{n\rightarrow \infty} \frac{1}{n} \log K((1/2+\alpha)n , zn, 2z_1 n) \nonumber \\
=&  (\frac{1}{2}+\alpha) \left(-\log 2-\sup_{\theta \in \mathbb{R}}(-\theta^2-\log(1+\erf(\theta+(1/2+\alpha)^{-{1\over 2}}(2x+\tau))) )\right)+o_c(1).
\end{align*}
where we have canceled out the same term $\log P_1$ from (\ref{Pos_approx}) and (\ref{sup_charac_1}). This expression is bounded from above
uniformly in $\alpha\in [0,1/2]$
as $c$ increases because of the $o_c(1)$ term. Likewise we have
\begin{align*}
& \lim_{n\rightarrow \infty} \frac{1}{n} \log K((1/2-\alpha)n , zn, 2z_2 n)  \nonumber \\
=&  (\frac{1}{2}-\alpha)  \left(-\log 2-\sup_{\theta \in \mathbb{R}}(-\theta^2-\log(1+\erf(\theta+(1/2-\alpha)^{-{1\over 2}}(2x-\tau)))) \right)+o_c(1),
\end{align*}
which is also uniformly bounded from above in $\alpha\in [0,1/2]$  as $c$ increases. Recalling (\ref{eq:BoundJ1}) we see that we may assume
$\alpha=O_c(c^{-1/2})$ which gives
\begin{align}
\label{K_z_z1}
 & \lim_{n\rightarrow \infty} \frac{1}{n} \log K((1/2+\alpha)n , zn, 2z_1 n) \nonumber \\
=&  -\frac{1}{2} \log 2-\frac{1}{2} \sup_{\theta \in \mathbb{R}}(-\theta^2-\log(1+\erf(\theta+2x+\tau))) +o_c(1)
\end{align}
where we have canceled out the same term $\log P_1$ from (\ref{Pos_approx}) and (\ref{sup_charac_1}). Likewise we have
\begin{align}
\label{K_z_z2}
& \lim_{n\rightarrow \infty} \frac{1}{n} \log K((1/2-\alpha)n , zn, 2z_2 n)  \nonumber \\
=&  -\frac{1}{2} \log 2-\frac{1}{2} \sup_{\theta \in \mathbb{R}}(-\theta^2-\log(1+\erf(\theta+2x-\tau))) )+o_c(1)
\end{align}
Using Lemma \ref{maximize_K2d}, (\ref{K_z_z1}) and (\ref{K_z_z2}) is combined by
\begin{align}
\label{Kz1_z2_ub}
& \lim_{n \rightarrow \infty}\frac{1}{n} \log J_2= \lim_{n \rightarrow \infty}\frac{1}{n} \log K((1/2+\alpha)n , zn, 2z_1 n)K((1/2-\alpha)n , zn, 2z_2 n)  \nonumber \\
& = -\log 2 +\frac{1}{2} L(-2\sqrt{2}x-\sqrt{2}\tau) + \frac{1}{2} L(-2\sqrt{2}x+\sqrt{2}\tau) + o_c(1) \nonumber \\
& \leq  -\log 2 + L(-2\sqrt{2}x) +o_c(1) = -\log 2 - \sup_{\theta \in \mathbb{R}} (-\theta^2 - \log(1+\erf(\theta+2x))) + o_c(1)
\end{align}
where the equality holds when $\tau=0$, i.e. $z_1=z_2=c/4-x\sqrt{c}/2$, which corresponds to the cut under which the number of edges within
each part is the same.  The supremum in (\ref{Kz1_z2_ub}) is attained by the solution $\theta$ to (\ref{transce2}).

Now we go back to optimizing $\frac{1}{n} \log J_1$ in (\ref{optimize_2}), while relying on the bound (\ref{eq:alpha_bound}).
Consider the right-hand side of (\ref{optimize_2}). Consider this expression without the $o_{\alpha}(\alpha^2)$ and $o(1)$ terms and denote it
by $V(\alpha,z_1)$ after substitution $z_2=c-z-z_1$. Since we have already established that $\alpha=O_c(c^{-1/2})$, this is justified in the later steps.
We have
\begin{align}
\label{V_az1}
V(\alpha,z_1)=&\log 2 - 2c\log2 + 4(2z_1-c+z)\alpha-(2+4c)\alpha^2-z \log z - z_1 \log2z_1 \nonumber \\
              & - (c-z-z_1)\log 2(c-z-z_1) + c\log 2c,
\end{align}
where $z_1$ is subject to (\ref{z1z2}). $\alpha=\frac{2z_1-(c-z)}{1+2c}$ maximizes $V(\alpha,z_1)$. Substituting it to (\ref{V_az1}) yields
\begin{align}
V\left(\frac{2z_1-(c-z)}{1+2c},z_1 \right)= &\log 2 - 2c\log2 -z \log z + c\log 2c + \frac{2(2z_1-c+z)^2}{1+2c}- \nonumber \\
                                &-z_1 \log2 z_1 - (c-z-z_1) \log2(c-z-z_1).
\end{align}
Its first derivative w.r.t. $z_1$ is
\begin{align}
\label{f_deri}
\frac{8(2z_1-c+z)}{1+2c}-\log(2z_1)+\log2(c-z-z_1),
\end{align}
and its second derivative w.r.t. $z_1$ is
$$
\frac{16}{1+2 c}-\frac{1}{c-z-z_1}-\frac{1}{z_1}.
$$
It is easy to see that the expression above is maximized at $(c-z)/2$, which yields
$$
\frac{16}{1+2 c}-\frac{2}{c-z}-\frac{2}{c-z}= \frac{8}{\frac{1}{2}+ c}-\frac{8}{c-2x\sqrt{c}}<0.
$$
Hence, $V(\frac{2z_1-(c-z)}{1+2c},z_1)$ is concave in $z_1$. Setting its first derivative in (\ref{f_deri}) to zero, namely,
$$
\frac{8(2z_1-c+z)}{1+2c}-\log(2z_1)+\log2(c-z-z_1)=0
$$
gives that $V(\frac{2z_1-(c-z)}{1+2c},z_1)$ is maximized at $z_1=(c-z)/2=c/4-x\sqrt{c}/2$, which is the same as the condition $\tau=0$ for maximizing $\frac{1}{n}\log J_2$. In other words, $J_1$ and $J_2$ attain the maximum under the same conditions $\alpha=0$ and $z_1=z_2=(c-z)/2$. Substituting $z=c/2+x\sqrt{c}$ and using the asymptotic expansion
$$
\log(c/2+x\sqrt{c})=\log(c/2)+2x/\sqrt{c}-2x^2/c+o_c(1/c)
$$
simplify the maximum of $V(\alpha,z_1)$ as
\begin{align}
\label{V_opt}
 V(0,(c-z)/2)
&= \log 2 - 2c\log2 -z \log z + c\log 2c - (c-z) \log (c-z)+o_c(1)+o(1) \nonumber \\
&=\log 2 - c \log 2 +c\log c -(c/2+x\sqrt{c})\log(c/2+x\sqrt{c}) \nonumber \\
 & \quad - (c/2-x\sqrt{c})\log(c/2-x\sqrt{c}) +o_c(1)+o(1) \nonumber \\
&= \log2-2x^2+o_c(1)+o(1).
\end{align}
Combining the results in (\ref{Kz1_z2_ub}) and (\ref{V_opt}), we have that the exponent of $\mathbb{E}[X(zn)]$ in (\ref{halfalpha}) is attained at $\alpha=0$ and $z_1=(c-z)/2$, i.e.
$$
\lim_{n\rightarrow \infty} \frac{1}{n} \log \mathbb{E}[X(zn)]=-2x^2-\sup_{\theta\in \mathbb{R}}(-\theta^2-\log(1+\erf(2x+\theta)))+o_c(1).
$$
Then (\ref{EtildeEX}) follows from solving $\theta$ from (\ref{transce2}) for a given $x$. This completes the proof of Proposition~\ref{prop:comp_EbarX}.
\end{proof}

Finally we prove Lemma~\ref{lemma:UniqueSolution}.
\begin{proof}[Proof of Lemma~\ref{lemma:UniqueSolution}]
Lemma~\ref{LPD_lemma} gives that for every $x$ in the region (\ref{eq:xupperrange}),
there exists a unique solution $\theta(x)$ to the equation (\ref{transce2}). We now use it to establish the uniqueness of the solution to the equation
system (\ref{transce1}), (\ref{transce2}).

It can be verified using elementary methods
 that $w(x)$ defined by (\ref{eq:wx}) is a differentiable function on $\R$, and therefore is continuous.
We verify numerically that $w(0.3761)=0.19721..>0>w(0.5887) =-0.05595..~$. Then the existence of the solution follows by the continuity of $w(x)$.

We now establish the uniqueness of the solution. We have
\begin{align*}
{\dot w}(x)&=-4x+2\theta(x) \frac{d \theta(x)}{d x}  +\frac{e^{-(2x+\theta(x))^2}}{\sqrt{\pi}(1+\erf(2x+\theta(x)))} \left(4+2 \frac{d \theta(x)}{d x} \right) \nonumber  \\
         &=-4x+2\theta(x) {d\theta(x)\over dx} -\theta(x) \left(4+2 {d\theta(x)\over dx} \right) \nonumber  \\
				 &=-4(x+\theta(x)),  \nonumber
\end{align*}
where we have used (\ref{transce2}) to simplify the first step above. Next, we will show that $x+\theta(x) > 0$. This implies that $w$ is a strictly decreasing
function in the relevant region and therefore the solution is unique as claimed.
Letting $y(x)=x+\theta(x)$,  (\ref{transce2}) can be rewritten by
$$
\frac{1}{\sqrt{\pi}} \frac{e^{-(x+y(x))^2}}{1+\text{erf}(x+y(x))}=x-y(x)
$$
Since $\theta(x)$ is unique for a fixed $x \in [0.3761, 0.5887]$, $y(x)$ is also the unique solution to the equation above. For a fixed $x \in [0.3761, 0.5887]$, let
$$
g(y)=x-y-\frac{1}{\sqrt{\pi}} \frac{e^{-(x+y)^2}}{1+\text{erf}(x+y)}
$$
We check numerically that $g(0)=x-\frac{1}{\sqrt{\pi}} \frac{e^{-x^2}}{1+\text{erf}(x)}>0$ and $g(x)=-\frac{1}{\sqrt{\pi}} \frac{e^{-4x^2}}{1+\text{erf}(2x)}<0$ for any $x$
in the region (\ref{eq:xupperrange}). Therefore the unique solution to $g(y)=0$ belongs to the region $(0,x)$ and therefore $y(x)>0$ as claimed.
\end{proof}

\section{Lower bound. The second moment method}\label{section:LowerBound}
In this section, we use the second moment method coupled with the local optimality property of optimal cuts to obtain a lower bound on the optimal cut size.
Specifically, let again $X(zn)$ denote the number of cuts with value $zn$ satisfying the local optimality constraints. For even $n$, we restrict this set to consist of balanced
cuts only, $\alpha=0$, with left and right node sets having the same cardinality, still denoting this set by $X(zn)$ for convenience. For odd $n$, we instead restrict this set to the cuts with an imbalance of one node between the sides of the cut. Then $\alpha = O(1/n)$. We will ignore this case as $\alpha = O(1/n)$ only contributes an extra term $O(1/n)$ to $\frac{1}{n} \mathbb{E}[X^2(zn)]$. The main bulk of this
section will be devoted to showing the following result, which is an analogue of Proposition~\ref{prop:comp_EbarX} for the second moment computation.
\begin{proposition}
\begin{align}\label{eq:SecondMoment}
\E[X^2(zn)]\le \E^2[X(zn)]\exp(o_c(1)n),
\end{align}
when $z=c/2+x\sqrt{c}$ and $x<x_l$ with $x_l$ identified in Theorem~\ref{main_theorem}.
\end{proposition}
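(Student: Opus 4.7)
The plan is to decompose $\E[X^2(zn)]$ as a sum over ordered pairs of balanced cuts $(V_1,V_2)$ and $(V_1',V_2')$, each of value $zn$ and each satisfying the local optimality constraint, parametrized by the overlap $\beta\in[0,1/2]$ defined by $|V_1\cap V_1'|=|V_2\cap V_2'|=\beta n$ and $|V_1\cap V_2'|=|V_2\cap V_1'|=(1/2-\beta)n$. The vertex set then splits into the four atoms $A_{ij}=V_i\cap V_j'$, $i,j\in\{1,2\}$, of sizes $\beta n,(1/2-\beta)n,(1/2-\beta)n,\beta n$. An edge between atoms $A_{ij}$ and $A_{kl}$ contributes to cut~1 iff $i\ne k$ and to cut~2 iff $j\ne l$, so $\E[X^2(zn)]$ becomes a sum over $\beta$ and over admissible per-atom-pair edge counts subject to both cut-size constraints. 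The entropic prefactor $-2\beta\log\beta-2(1/2-\beta)\log(1/2-\beta)$ in (\ref{eq:Wxbeta}) counts the choice of the overlap pattern, generalizing $H(1/2)=\log 2$ used in the first-moment calculation.

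For each fixed $\beta$ and fixed atom-pair edge counts, the combinatorial prefactor mirrors the $J_1$ construction from the proof of Proposition~\ref{prop:comp_EbarX}: one assigns clones to atoms and matches them inside the configuration model. The local-optimality probability factors across atoms by independence of the balls-into-bins draws, and within each atom every vertex must satisfy \emph{two} simultaneous balls-in-bins inequalities — its in-$V_1$ neighbor count must not exceed its in-$V_2$ count, and its in-$V_1'$ count must not exceed its in-$V_2'$ count. Applying the Poissonization of Lemma~\ref{poissonization}, the local large-deviation estimate of Theorem~\ref{LCLTLDP}, and the Gaussian approximation of Lemma~\ref{appro_nrv} valid as $c\to\infty$, this two-dimensional per-vertex rare event collapses to the integral $P(\theta,a_1,a_2)$ of (\ref{eq:Pfunction}), with the two arguments $a_1,a_2$ encoding the shape of the 2D feasibility cone and the shift induced by the cut-size conditioning.

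Performing the Laplace-type maximization over the interior edge-count parameters yields first-order conditions that collapse to the three-equation system (\ref{partF_the1})--(\ref{partF_t}): $\theta_1$ and $\theta_2$ are the dual parameters for the 2D local-optimality rate at atoms of size $\beta n$ and $(1/2-\beta)n$ respectively, while $t$ is the Lagrange multiplier that splits the total cut value $x\sqrt{c}n$ between the two atom-size classes. Uniqueness of the saddle point is exactly Lemma~\ref{lemma:UniqueSolution2}. Substituting the saddle back into the objective identifies the exponent of the $\beta$-slice of $\E[X^2(zn)]$ with $W(x,\beta)$ from (\ref{eq:Wxbeta}), giving
\[
\lim_{n\to\infty}\frac{1}{n}\log\E[X^2(zn)] \;=\; \sup_{\beta\in(0,1/2)}W(x,\beta) \,+\, o_c(1).
\]

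At the ``balanced overlap'' $\beta=1/4$ all four atoms have size $n/4$ and the two cuts are asymptotically independent; in this case the system (\ref{partF_the1})--(\ref{partF_t}) reduces to two decoupled copies of (\ref{transce2}) (with $t^*=x/4$ by symmetry) and a direct computation gives $W(x,1/4)=2w(x)$, matching the square of the first-moment exponent of Proposition~\ref{prop:comp_EbarX}. The boundary values $\beta=0$ and $\beta=1/2$ correspond to the two cuts coinciding up to swapping and contribute only $\exp(w(x)n+o(n))$, negligible against $\exp(2w(x)n)$. For $x<x_l$, the definition (\ref{lower_bound_x}) together with continuity of $W(x,\cdot)$ and of $w$ (from the proof of Lemma~\ref{lemma:UniqueSolution}) forces $\sup_{\beta\in(0,1/2)}W(x,\beta)=2w(x)$, and (\ref{eq:SecondMoment}) follows. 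The main obstacle in this plan is the Laplace reduction from the high-dimensional sum over atom-pair edge counts to the three-variable system (\ref{partF_the1})--(\ref{partF_t}) and the verification that the maximum is interior and takes the claimed form $W(x,\beta)$; this is a delicate multidimensional computation paralleling Lemma~\ref{maximize_K2d} but involving twice as many coupled coordinates and the simultaneous two-dimensional local-optimality cone per vertex.
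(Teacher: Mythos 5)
Your proposal follows essentially the same route as the paper: the same four-atom overlap decomposition indexed by $\beta$, the same per-vertex two-sided local-optimality event handled via Poissonization, the local large-deviations theorem and the Gaussian ($c\to\infty$) limit yielding $P(\theta,a_1,a_2)$, the same Laplace/saddle-point reduction of the edge-count sum to $W(x,\beta)$ and the system (\ref{partF_the1})--(\ref{partF_t}), the identity $W(x,1/4)=2w(x)$, the dismissal of the $\beta=0$ boundary by comparison with the first moment, and the (computer-assisted) fact that $\sup_{\beta}W(x,\beta)=2w(x)$ for $x<x_l$. One minor slip: at $\beta=1/4$ symmetry forces $t^*=x/2$ rather than $x/4$ (the split $t$ and $x-t$ must coincide), which does not affect the argument.
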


Before proving it let us show how it implies
the lower bound part of Theorem~\ref{main_theorem}.
Using inequality
\begin{align*}
\pr(X(zn)\ge 1)\ge {\E^2[X(zn)]\over \E[X^2(zn)]},
\end{align*}
which is a well-known and easy implication of the Cauchy-Schwartz inequality, we obtain $\pr(X(zn)\ge 1)\ge \exp(-o_c(1)n)$.
From this we obtain
\begin{align}
\label{MC_inequality1}
\mathbb{P}[MC_{c,n} \geq zn] \geq \exp(-no_c(1)).
\end{align}
Applying the Hoeffding-Azuma inequality for Max-Cut gives
\begin{align}
\label{Azuma_MC}
\mathbb{P}[\lvert MC_{c,n}-\mathbb{E}[MC_{c,n}]  \rvert \geq t] \leq 2 \exp\left( -\frac{t^2}{2cn} \right), \; \text{for} \;  t>0.
\end{align}
Set $t=\frac{\epsilon}{2}\sqrt{c}n$ for any given $\epsilon>0$, and then (\ref{Azuma_MC}) gives
\begin{align}
\label{HA_inequality2}
\mathbb{P}[\lvert MC_{c,n}-\mathbb{E}[MC_{c,n}]  \rvert \geq \frac{\epsilon}{2}\sqrt{c}n] \leq 2 \exp \left(-\frac{\epsilon^2}{8}n \right).
\end{align}
Observe that for any given $\epsilon>0$, $c$ can be chosen sufficiently large such that $\exp(-no_c(1))$ on the right-hand side  of (\ref{MC_inequality1}) is larger than $2 \exp \left(-\frac{\epsilon^2}{8}n \right)$. Then
$$
\mathbb{E}[MC_{c,n}] + \frac{\epsilon}{2} \sqrt{c} n \geq z n.
$$
Applying (\ref{HA_inequality2}) again yields that w.h.p.
$$
MC_{c,n} \geq \mathbb{E}[MC_{c,n}]-\frac{\epsilon}{2}\sqrt{c}n \geq  zn -\epsilon \sqrt{c} n
$$
Since the inequality holds for any $\epsilon>0$, it yields that w.h.p.
$$
\liminf_{c\rightarrow \infty} \lim_{n\rightarrow \infty} \frac{ \frac{1}{n}MC_{c,n}-\frac{c}{2}}{\sqrt{c}}
=\lim_{c\rightarrow \infty} \frac{ \mathcal{MC}(c)-\frac{c}{2}}{\sqrt{c}}
\geq x,
$$
for every $x<x_l$,
and we obtain the result.

We now establish (\ref{eq:SecondMoment})
As in Section~\ref{section:UpperBound}, we use the following occupancy problems to analyze the local optimality condition.
Consider four independent experiments. In the experiment $j=1,\dots,4$, $\mu_j$ balls are thrown independently and u.a.r. into $n$ bins.
Denote the number of balls in bin $i$, $i\in[n]$, as $E_i^{(j)}$. Let
$$
K(n,\mu_1,\mu_2,\mu_3,\mu_4)=\mathbb{P}[E_i^{(2)}-E_i^{(4)} \geq \lvert E_i^{(1)}-E_i^{(3)} \rvert, 1 \leq i \leq n],
$$
which will be used to compute the probability of the event that all the vertices in a specified vertex subset satisfy the local optimality conditions for a pair of cuts. Similar to Lemma \ref{lemma_twoPo}, we have the following Lemma,
\begin{lemma}
\label{lemmaK4d}
For each $j$, $j=1,\dots,4$, Let $(B_i^{(j)})_{i\in [n]}$ be a family of i.i.d. Poisson variables with the same mean $\lambda_j=\mu_j/n>0$, then we have
\begin{align}\label{Km_4d}
K(n,\mu_1,\mu_2,\mu_3,\mu_4)
= \prod_{j=1}^4 \Theta_{\mu_j}(\sqrt{\mu_j})\mathbb{P}\left[\sum_{i=1}^{n} B_i^{(j)}=\mu_j, 1 \leq j \leq 4   \Biggl| \mathcal{B}_i, 1 \leq i \leq n \right]
                                  (\mathbb{P}[ \mathcal{B}_1 ])^{n},
\end{align}
where $\mathcal{B}_i$ denotes the event $B_i^{(2)}-B_i^{(4)} \geq \lvert B_i^{(1)}-B_i^{(3)} \rvert$.
\end{lemma}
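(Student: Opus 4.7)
The plan is to follow exactly the pattern of the proof of Lemma~\ref{lemma_twoPo}, just with four independent experiments instead of two. The key observation is that since the four ball-throwing experiments are mutually independent, the joint distribution of $(E_i^{(1)}, E_i^{(2)}, E_i^{(3)}, E_i^{(4)})_{i \in [n]}$ factors into a product of four marginals, each of which is a balls-into-bins distribution amenable to Poissonization via Lemma~\ref{poissonization}.

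First I would expand $K(n,\mu_1,\mu_2,\mu_3,\mu_4)$ by summing over all admissible tuples of occupancy sequences. Define
\begin{align*}
S(\mu_1,\mu_2,\mu_3,\mu_4) = \Bigl\{ (t_i^{(j)})_{i\in[n], 1\le j\le 4} : t_i^{(2)} - t_i^{(4)} \ge |t_i^{(1)} - t_i^{(3)}|\ \forall i,\ \textstyle\sum_i t_i^{(j)} = \mu_j\ \forall j\Bigr\}.
\end{align*}
Then by independence of the four experiments,
\begin{align*}
K(n,\mu_1,\mu_2,\mu_3,\mu_4) = \sum_{S(\mu_1,\mu_2,\mu_3,\mu_4)} \prod_{j=1}^4 \mathbb{P}[E_i^{(j)} = t_i^{(j)},\ 1\le i\le n].
\end{align*}

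Next I would apply Lemma~\ref{poissonization} separately to each of the four factors. For each $j$ this produces a multiplicative error term $\Theta_{\mu_j}(\sqrt{\mu_j})$ and replaces the occupancy probability with the probability under i.i.d.\ Poisson variables $(B_i^{(j)})_{i\in[n]}$ of mean $\lambda_j = \mu_j/n$. Since the four Poisson families are constructed independently across $j$, the product of the four Poisson joint probabilities recombines into a single joint probability over all $B_i^{(j)}$. Summing over $S(\mu_1,\mu_2,\mu_3,\mu_4)$ then yields
\begin{align*}
K(n,\mu_1,\mu_2,\mu_3,\mu_4) = \prod_{j=1}^4 \Theta_{\mu_j}(\sqrt{\mu_j})\, \mathbb{P}\Bigl[\mathcal{B}_i,\ 1\le i\le n;\ \textstyle\sum_i B_i^{(j)}=\mu_j,\ 1\le j\le 4\Bigr].
\end{align*}

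Finally, I would apply the definition of conditional probability to split the joint probability into
\begin{align*}
\mathbb{P}\Bigl[\textstyle\sum_i B_i^{(j)} = \mu_j,\ 1\le j\le 4 \,\Bigm\vert\, \mathcal{B}_i,\ 1\le i\le n\Bigr]\cdot \mathbb{P}[\mathcal{B}_i,\ 1\le i\le n],
\end{align*}
and use the fact that across $i$ the quadruples $(B_i^{(1)}, B_i^{(2)}, B_i^{(3)}, B_i^{(4)})$ are i.i.d., so that $\mathbb{P}[\mathcal{B}_i,\ 1\le i\le n] = (\mathbb{P}[\mathcal{B}_1])^n$. This delivers the claimed identity~(\ref{Km_4d}).

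There is no real obstacle here; the argument is a mechanical four-dimensional extension of the two-dimensional proof of Lemma~\ref{lemma_twoPo}. The only point requiring a little care is making sure the Poissonization lemma is applied once per experiment (producing four independent factors of $\Theta_{\mu_j}(\sqrt{\mu_j})$) before recombining into the joint Poisson law; the independence of the $j$-indexed experiments is what legitimizes this recombination.
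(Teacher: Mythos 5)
Your proposal is correct and is essentially the paper's own argument: the paper gives no separate proof of this lemma, stating only that it follows "similar to Lemma~\ref{lemma_twoPo}", and your four-experiment expansion over admissible occupancy tuples, per-experiment application of Lemma~\ref{poissonization}, recombination by independence, and the final conditioning step with $(\mathbb{P}[\mathcal{B}_1])^n$ is exactly that two-experiment proof carried over mechanically.
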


\begin{figure}[!htb]
\vspace{0pc}
\centering
\includegraphics[width=2 in]{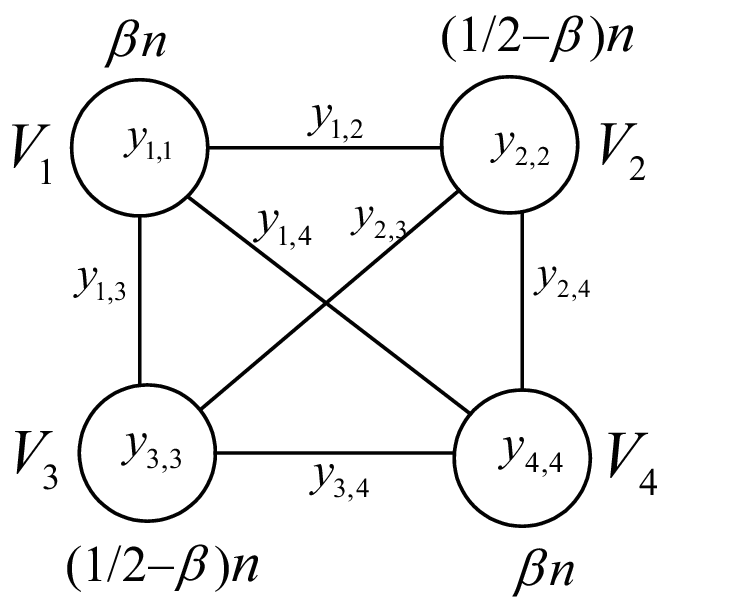}
\caption{Illustration of computing $E[X^2(zn)]$ where $y_{j,k}=nz_{j,k}$, $1 \leq j \leq k\leq 4$.}
\label{MCSM}
\end{figure}

Consider a partition of the vertex set $[n]$ into four subsets $V_j, 1\le j\le 4$ with cardinalities
\begin{align}\label{V1V4}
|V_1|=|V_4|=\beta n, ~~|V_2|=|V_3|=(1/2-\beta)n.
\end{align}
Each such partition defines two cuts. The first cut is $(D_1,D_2)$, where $D_1=V_1\cup V_3, D_2=V_2\cup V_4$. The second cut is  $(D_3,D_4)$, where $D_3=V_1\cup V_2, D_4=V_3\cup V_4$.
Let $nz_{j,k}$, $j,k \in \{1,2,3,4\}$ and $j<k$, be the number of edges crossing $V_j$ and $V_k$ in the random graph
$\Gncn$. For the case of $j=k$, $nz_{j,j}$ is the number of edges with both ends in the same vertex subset $V_j$.

Let $\beta_{j,k}$ be the number of all the possible edges crossing $V_j$ and $V_k$ normalized by $n^2$.
Namely $\beta_{j,k}=|V_j||V_k|/n^2$ when $j\ne k$ and $\beta_{j,j}=|V_j|(|V_j|+1)/(2n^2)$. If $(D_1,D_2)$ defines a cut of size $zn$, then
it must be the case that $z_{1,2}+z_{1,4}+z_{2,3}+z_{3,4}=z$. Similarly, if $(D_3,D_4)$ defines a cut of size $zn$, then
it must be the case that $z_{1,3}+z_{1,4}+z_{2,3}+z_{2,4}=z$.  We use this observation to state and proof the following result.

\begin{lemma}\label{lemma:Optimization2ndMoment}
\begin{align}
\label{EXm}
\mathbb{E}[X^2(zn)] = &   \sum_{\beta, nz_{j,k}} I_1 I_2,
\end{align}
where the sum is over all $\beta,z_{j,k}$ such that $\beta\in [0,1/4]$, $(1/2-\beta)n, \beta n, z_{j,k}n$ are integers and
\begin{align}
I_1=& \frac{{n \choose {\beta n} , {(1/2-\beta)n}, {(1/2-\beta)n}, {\beta n}}  }{n^{2cn} F(2cn)} \times I_{1,1} I_{1,2} I_{1,3}  \nonumber \\
I_{1,1} &= {(2cn)! \over \prod_{1\le i\le 4}(2nz_{i,i})!\prod_{1\le i<j\le 4}((nz_{i,j})!)^2} \nonumber \\
I_{1,2}&=(\beta n)^{(2z_{1,1}+2z_{4,4}+2z_{1,4}+z_{1,2}+z_{1,3}+z_{2,4}+z_{3,4})n}
 ((1/2-\beta)n)^{(2z_{2,2}+2z_{3,3}+2z_{2,3}+z_{1,2}+z_{1,3}+z_{2,4}+z_{3,4})n},   \\
I_{1,3} &= \prod_{1\leq i <j \leq 4} (nz_{i,j})! \prod_{i=1}^4 F(2nz_{i,i}),
\end{align}
\begin{align}
I_2 =& K(\beta n, nz_{1,2},nz_{1,4},nz_{1,3},2nz_{1,1})
     K((1/2-\beta) n, nz_{1,2}, nz_{2,3},nz_{2,4}, 2nz_{2,2})\times  \nonumber \\
    &  \times K((1/2-\beta) n, nz_{3,4},nz_{2,3},nz_{1,3},2nz_{3,3})
		K(\beta n, nz_{3,4},nz_{1,4},nz_{2,4},2nz_{4,4}),
\end{align}
and $z_{j,k}\ge 0$, $0\le j\le k \le 4$ satisfy
\begin{align}
\label{y1_y4}
& z_{1,2}+z_{1,4}+z_{2,3}+z_{3,4}=z, \nonumber \\
& z_{1,3}+z_{1,4}+z_{2,3}+z_{2,4}=z, \nonumber \\
& \sum_{1\le j\le k\le 4} z_{j,k}=c.
\end{align}

\ignore{
\begin{align}
\label{Y_jk}
\beta_{j,k} =
\begin{cases}
\lvert V_j \rvert \lvert V_k \rvert, & \text{if } j <k \\
 {\lvert V_j \rvert} \choose 2  & \text{if } j=k
\end{cases}
\end{align}
where $\lvert V_j \rvert$, $1 \leq j\leq 4$ are given by (\ref{V1V4}).
}

\end{lemma}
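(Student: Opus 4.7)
The plan is to enumerate ordered pairs of balanced cuts and classify them by the intersection pattern they induce on the vertex set. Writing $X(zn)=\sum_{(A_1,A_2)}\mathbf{1}_{\mathcal{E}(A_1,A_2)}$, where the sum is over balanced cuts and $\mathcal{E}(A_1,A_2)$ is the event that $(A_1,A_2)$ is a cut of $\Gncn$ of size $zn$ satisfying local optimality, I would expand $X^2(zn)$ as a double sum over ordered pairs $((D_1,D_2),(D_3,D_4))$. For each such pair introduce the four intersection classes $V_1=D_1\cap D_3$, $V_2=D_2\cap D_3$, $V_3=D_1\cap D_4$, $V_4=D_2\cap D_4$. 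Balancedness of both cuts forces $|V_1|=|V_4|$ and $|V_2|=|V_3|$, and up to a labeling convention this is indexed by the parameter $\beta\in[0,1/4]$. Denoting by $z_{j,k}n$ the number of edges between $V_j$ and $V_k$ (by $2z_{j,j}n$ the half-edge count of internal edges in $V_j$, consistent with the configuration model), the requirements that both cuts have size $zn$ and that the graph has $cn$ edges are exactly the constraints (\ref{y1_y4}).

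Next, for a fixed vertex partition $(V_1,\ldots,V_4)$ of the prescribed sizes and a fixed admissible vector $(z_{j,k})$, I would compute the probability that the random multigraph $\Gncn$ realizes these edge counts. This is a combinatorial computation in the configuration model, where $2cn$ labeled half-edges are assigned to vertices and then matched uniformly; the total number of such configurations is $n^{2cn}F(2cn)$. The favorable configurations decompose as a product of: (i) $I_{1,1}$, the multinomial labeling each half-edge with its edge class; (ii) $I_{1,2}$, the placement factor counting assignments of half-edges to specific vertices within each part, whose exponents record the total number of half-edges incident to the union $V_1\cup V_4$ (bins of size $\beta n$) and to $V_2\cup V_3$ (bins of size $(1/2-\beta)n$); and (iii) $I_{1,3}$, the matching factor pairing up half-edges into the prescribed edges, using $F(2nz_{ii})$ perfect matchings inside each internal class and $(nz_{ij})!$ bijections across each cross-class pair. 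Multiplying by the multinomial $\binom{n}{\beta n,(1/2-\beta)n,(1/2-\beta)n,\beta n}$ that selects the partition, and dividing by $n^{2cn}F(2cn)$, yields $I_1$.

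To identify the conditional local-optimality probability $I_2$ given the edge counts, I would focus on a vertex $v\in V_1$ with neighbor counts $E_v^{(1)},E_v^{(2)},E_v^{(3)},E_v^{(4)}$ in $V_2,V_4,V_3,V_1$ respectively. Local optimality of $(D_1,D_2)$ at $v$ reads $E_v^{(1)}+E_v^{(2)}\ge E_v^{(3)}+E_v^{(4)}$, and local optimality of $(D_3,D_4)$ at $v$ reads $E_v^{(2)}+E_v^{(3)}\ge E_v^{(1)}+E_v^{(4)}$; adding and subtracting these shows the conjunction is equivalent to $E_v^{(2)}-E_v^{(4)}\ge |E_v^{(1)}-E_v^{(3)}|$, which is exactly the event underlying $K(\beta n, nz_{1,2}, nz_{1,4}, nz_{1,3}, 2nz_{1,1})$. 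The analogous translations for $v\in V_2,V_3,V_4$ yield the other three $K$ factors. Crucially, conditional on the realized $z_{j,k}$, the half-edges incident to vertices within distinct parts are distributed as independent balls-into-bins experiments, so the four per-part local optimality events factor, giving the product form of $I_2$.

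The main technical obstacle I anticipate is the careful bookkeeping: verifying that the exponents in $I_{1,2}$ equal the correct totals of half-edges incident to $V_1\cup V_4$ and to $V_2\cup V_3$; that the conjunction of the two local optimality inequalities at each part simplifies exactly to the asymmetric condition $E^{(2)}-E^{(4)}\ge |E^{(1)}-E^{(3)}|$ with the correctly permuted role labels on $V_1,\ldots,V_4$; and that the corresponding arguments of $K$ (one internal-to-the-part contribution scaled by $2$ together with three cross-part contributions) match the definition in Lemma~\ref{lemmaK4d}. Once these identifications are validated, summing $I_1 I_2$ over all admissible $(\beta,\{z_{j,k}\})$ satisfying (\ref{y1_y4}) gives (\ref{EXm}).
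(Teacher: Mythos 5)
Your proposal is correct and follows essentially the same route as the paper: it classifies ordered pairs of balanced cuts by the four intersection classes $V_1,\dots,V_4$, counts configurations in the configuration model to obtain $I_{1,1}I_{1,2}I_{1,3}$ and hence $I_1$, and reduces the conjunction of the two per-vertex local optimality inequalities to $E^{(2)}-E^{(4)}\ge \lvert E^{(1)}-E^{(3)}\rvert$, which together with independence across parts gives the four $K$ factors in $I_2$, exactly as in the paper's argument. (Your bookkeeping for the $(1/2-\beta)n$ exponent in $I_{1,2}$ — the total number of half-edges incident to $V_2\cup V_3$ — agrees with the form the paper actually uses later in (\ref{comp4}); the exponent printed in the lemma statement has a typo, $z_{2,3}$ appearing where $z_{2,4}$ is meant.)
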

\begin{proof}
The term ${n \choose {\beta n} , {(1/2-\beta)n}, {(1/2-\beta)n}, {\beta n}}$ in $I_1$ is the number of ways of selecting sets $V_j$ satisfying (\ref{V1V4}).
As shown in Figure \ref{MCSM}, $I_{1,1}I_{1,2}$
is the number of ways of assigning $2cn$ balls ($cn$ edges) to the vertex subsets $V_i$, $1\le i\le 4$, such that for each vertex subset $V_i$,
there are $2nz_{i,i}$ balls for generating $nz_{i,i}$ edges inside, and another $nz_{i,j}$ ($nz_{j,i}$ if $j<i$), balls for generating $nz_{i,j}$ edges crossing $V_i$ and $V_j$. $I_{1,3}$ is the number of matchings for generating the graph with the numbers of edges inside each separate vertex subset and crossing two different vertex subsets as shown in Figure \ref{MCSM}. Since the two cuts $(D_1, D_2)$ and $(D_3, D_4)$ both have cut size $zn$, it implies the constraints in (\ref{y1_y4}).

We claim that $K(\beta n, nz_{1,2},nz_{1,4},nz_{1,3},2nz_{1,1})$ is the probability that for both cut $(D_1, D_2)$ and cut $(D_3, D_4)$, each vertex in $V_1$ satisfies the local optimality condition. Indeed, since we choose $nz_{1,j}$ edges crossing $V_1$ and $V_j$ u.a.r., the resultant joint degree distribution from these edges for the vertices in $V_1$ is the same as the joint distribution of number of balls in the bins when $nz_{1,j}$ balls are thrown u.a.r. into $\beta n$ bins.
Denote this joint degree vector by $(E_i^{(1,j)})_{i\in [\beta n]}, j=2,3,4$. Similarly, let $(E_i^{(1,1)})_{i\in [\beta n]}$ denote the joint degree vector
obtained from $nz_{1,1}$ internal edges.
The local optimality condition of the  cut $(D_1,D_2)$ for part $V_1$ is equivalent to
\begin{align*}
& E_i^{(1,2)}+E_i^{(1,4)}\ge E_i^{(1,1)}+E_i^{(1,3)},
\end{align*}
for each $i\in V_1$. Similarly the local optimality condition of the  cut $(D_3,D_4)$ for part $V_1$ is equivalent to
\begin{align*}
E_i^{(1,3)}+E_i^{(1,4)}\ge E_i^{(1,1)}+E_i^{(1,2)}.
\end{align*}
These two constraints put together are equivalent to the constraint $E_i^{(1,4)}-E_i^{(1,1)} \geq \lvert E_i^{(1,2)}-E_i^{(1,3)}\rvert, i\in V_1$.
Hence, as claimed
\begin{align}\label{eq:K1}
K(\beta n, nz_{1,2},nz_{1,4},nz_{1,3},2nz_{1,1}) =
\mathbb{P}[E_i^{(1,4)}-E_i^{(1,1)} \geq \lvert E_i^{(1,2)}-E_i^{(1,3)}\rvert, i\in V_1]
\end{align}
is the probability that the local optimality condition for each vertex in $V_1$ is satisfied. Likewise the other three terms following $K(\beta n, nz_{1,2},nz_{1,4},nz_{1,3},2nz_{1,1})$ account for the probability that the local optimality conditions are satisfied for the vertices in $V_2$, $V_3$ and $V_4$, respectively, and we complete the proof.
\end{proof}
\begin{remark}
Without loss of generality, we may only consider $z_{j,k}$, $1\leq j \leq k \leq 4$, satisfying
\begin{align}\label{z_ij_cons}
z_{1,4}-2z_{1,1} \geq | z_{1,2}-z_{1,3}|, \quad  z_{2,3}-2z_{2,2} \geq | z_{1,2}-z_{2,4}| \nonumber \\
z_{2,3}-2z_{3,3} \geq | z_{1,3}-z_{3,4}|, \quad  z_{1,4}-2z_{4,4} \geq | z_{2,4}-z_{3,4}|,
\end{align}
since otherwise the corresponding terms $K(\cdot)$ in the definition of $I_2$ corresponds to zero probability event,
which has no contribution to $\mathbb{E}[X^2(zn)]$ in (\ref{EXm}).
\end{remark}

As in the case of first moment argument, our next goal is to obtain bounds on limits of $n^{-1}\log I_1, n^{-1}\log I_2$ for each choice of $\beta,z_{i,j}$.
We note that we may assume $\beta>0$.
Indeed $\beta=0$ corresponds to the two cuts being identical. The corresponding limit $n^{-1}\log I_1I_2$ in this case is
simply $\lim_n n^{-1}\log \E[X(zn)]\le \lim_n n^{-1}\log \E^2[X(zn)]$, since $x$ is assumed to be below $x_u$.

Next we further simplify $I_1$. Since the total number of edges is $cn$, we have $\sum_{1\leq j \leq k \leq 4} z_{j,k}=c$.
Using Stirling's approximation, the terms in $I_1$ are simplified as follows:
\begin{align}
\label{comp1}
\frac{1}{n}\log {n \choose {\beta n} , {(1/2-\beta)n}, {(1/2-\beta)n}, {\beta n}} = -2\beta \log \beta-2(1/2-\beta)\log(1/2-\beta) + o(1).
\end{align}
Further,
\begin{align}
\frac{1}{n} \log I_{1,1}
 &= 2c \left( -\sum_{i=1}^4 \frac{z_{i,i}}{c} \log \frac{z_{i,i}}{c} - 2\sum_{1\leq j < k \leq 4}\frac{z_{j,k}}{2c} \log \frac{z_{j,k}}{2c}   \right)+o(1) \nonumber \\
\label{comp2}
& = -2\sum_{1\leq j \leq k \leq 4} z_{j,k}\log z_{j,k}+2c \log c + 2(c-\sum_{i=1}^4 z_{i,i}) \log 2+o(1), \\
\frac{1}{n} \log I_{1,2}
 &= (2z_{1,1}+2z_{4,4}+2z_{1,4}+z_{1,2}+z_{1,3}+z_{2,4}+z_{3,4})(\log \beta + \log n)  \nonumber \\
    & \quad + (2z_{2,2}+2z_{3,3}+2z_{2,3}+z_{1,2}+z_{2,4}+z_{1,3}+z_{3,4})(\log (1/2-\beta) + \log n) \nonumber \\
& = 2c \log n + \sum_{1 \leq j \leq k \leq 4} z_{j,k} \log \beta_{j,k} + \sum_{i=1}^4 z_{i,i} \log 2+o(1). \label{comp4}
\end{align}
Next
\begin{align}
\frac{1}{n} \log \left( \prod_{1\leq i <j \leq 4} (nz_{i,j})! \right)&= \sum_{1\leq j<k \leq 4} z_{j,k}\log z_{j,k}+\sum_{1\leq j<k \leq 4} z_{j,k}\log n - \sum_{1\leq j<k \leq 4} z_{j,k} +o(1), \\
\frac{1}{n}  \log \prod_{i=1}^4 F(2nz_{i,i})&=\sum_{i=1}^4 2z_{i,i} \frac{\log(2z_{i,i}n)-1}{2} +o(1)  \notag \\
&=\sum_{i=1}^4 z_{i,i}(\log 2 + \log z_{i,i}+\log n-1)+o(1), \label{comp5}\\
\frac{1}{n}  \log \left( n^{2cn} F(2cn) \right)&=2c\log n + 2c \frac{\log(2cn)-1}{2} \notag \\
&=3c\log n + c\log c + c\log 2-c+o(1). \label{comp6}
\end{align}
Using (\ref{comp1})--(\ref{comp6}), we have
\begin{align}
\label{I_1}
\frac{1}{n} \log I_1= -2\beta \log \beta-(1-2\beta)\log(1/2-\beta)+\sum_{1\leq j \leq k \leq 4} z_{j,k} \log \left( \frac{\beta_{j,k}}{z_{j,k}} \right)+c\log(2c)+o(1)
\end{align}
Introduce $\eta_{j,k}$ through the identities
\begin{align}\label{eq:eta}
z_{j,k}&=2\beta_{j,k}c+2\eta_{j,k}\sqrt{c}, \qquad 1\le j\le k\le 4,
\end{align}
and let $\eta=(\eta_{j,k}, 1\le j,k\le 4)$.

\subsection{Bounds on $I_1$}\label{subsection:bounds on I1}
In terms of our notations (\ref{eq:eta}), we have
\begin{align} \label{I_12}
\sum_{1\leq j \leq k \leq 4} z_{j,k} &\log \left( \frac{\beta_{j,k}}{z_{j,k}} \right)+c\log(2c) \nonumber \\
=&-c\sum_{1\le j\le k\le 4}(2\beta_{j,k}+2\eta_{j,k} c^{-{1\over2}})\log(2c+2\eta_{j,k}\beta_{j,k}^{-1}c^{{1\over2}})+c \log (2c)\nonumber \\
=& -2c\sum_{1\le j\le k\le 4}(\beta_{j,k}+\eta_{j,k} c^{-{1\over2}})\log(1+\eta_{j,k}\beta_{j,k}^{-1}c^{-{1\over2}}).
\end{align}
The constraints (\ref{y1_y4}) can be rewritten as
\begin{align}
& \eta_{1,2}+\eta_{1,4}+\eta_{2,3}+\eta_{3,4}=x/2, \label{y1_y4.C1}\\
& \eta_{1,3}+\eta_{1,4}+\eta_{2,3}+\eta_{2,4}=x/2, \label{y1_y4.C2}\\
& \sum_{1\le j\le k\le 4} \eta_{j,k}=0. \label{y1_y4.CSum}
\end{align}
Let
\begin{align*}
I_3(\beta,\eta)=\sum_{1\le j\le k\le 4}(\beta_{j,k}+\eta_{j,k} c^{-{1\over2}})\log(1+\eta_{j,k}\beta_{j,k}^{-1}c^{-{1\over2}}).
\end{align*}
Thus our next goal is to solve the optimization problem $\min_\eta I_3(\beta,\eta)$ subject to (\ref{y1_y4.CSum}).
The next lemma is used to show that in solving this optimization problem we may restrict the range of $\eta$ to a bound independent of $c$.
As a result we will be able to replace $\log(1+\eta_{j,k}\beta_{j,k}^{-1}c^{-{1\over2}})$ with its Taylor approximation
$\eta_{j,k}\beta_{j,k}^{-1}c^{-{1\over2}}-(1/2)\eta_{j,k}^2\beta_{j,k}^{-2}c^{-1}$.

\begin{lemma}\label{lemma:boundI1}
For every $a>0$ and $\beta\in (0,1/4)$, there exists $c_0=c_0(\beta,a)$ such that for all $c>c_0$ and all $\eta$ satisfying constraint (\ref{y1_y4.CSum}) and
$\|\eta\|_2\ge a$, the following bound holds:
\begin{align*}
I_3(\beta,\eta)\ge {a^2\over 4c}.
\end{align*}
\end{lemma}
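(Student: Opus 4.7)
The plan is to recognize $I_3(\beta,\eta)$ as a scalar multiple of a Kullback--Leibler divergence and then apply Pinsker's inequality, which immediately yields a quadratic lower bound of the desired form. No Taylor expansion or case split on $\|\eta\|$ is required.

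To set up, I introduce $q_{j,k}:=\beta_{j,k}+\eta_{j,k}c^{-1/2}$. Since $q_{j,k}=z_{j,k}/(2c)$ by (\ref{eq:eta}), we have $q_{j,k}\ge 0$, and $\beta_{j,k}>0$ since $\beta\in(0,1/4)$ makes every block $V_i$ non-empty. The constraint (\ref{y1_y4.CSum}) forces $\sum_{j\le k}q_{j,k}=\sum_{j\le k}\beta_{j,k}=:M$, and a short computation using $\sum_i|V_i|=n$ together with the formulas defining $\beta_{j,k}$ gives $M=1/2+1/(2n)$, which is bounded away from $0$ and $\infty$. Normalizing, the six-dimensional vectors $P_{j,k}:=q_{j,k}/M$ and $Q_{j,k}:=\beta_{j,k}/M$ are genuine probability distributions with $Q$ strictly positive, and
\begin{align*}
I_3(\beta,\eta)\;=\;\sum_{1\le j\le k\le 4}q_{j,k}\log\frac{q_{j,k}}{\beta_{j,k}}\;=\;M\cdot D(P\|Q),
\end{align*}
where $D$ denotes the natural-log KL divergence. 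Pinsker's inequality $D(P\|Q)\ge \tfrac12\|P-Q\|_1^2$ combined with $(P-Q)_{j,k}=\eta_{j,k}/(M\sqrt{c})$ and the elementary $\ell^1\ge\ell^2$ bound $\|\eta\|_1\ge\|\eta\|_2\ge a$ then yields
\begin{align*}
I_3(\beta,\eta)\;\ge\;M\cdot\frac{\|\eta\|_1^2}{2M^2 c}\;=\;\frac{\|\eta\|_1^2}{2Mc}\;\ge\;\frac{a^2}{2Mc}\;\ge\;\frac{a^2}{4c},
\end{align*}
where the final inequality holds once $c_0$ is chosen large enough that the corresponding $n$ makes $M\le 2$ (automatic since $M\to 1/2$).

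The only conceptual step is noticing the KL-divergence structure of $I_3$; once that is in hand, Pinsker's inequality supplies a quadratic lower bound that is valid uniformly over all feasible $\eta$, including those whose coordinates approach the negative boundary $\eta_{j,k}=-\beta_{j,k}\sqrt{c}$. The only verifications needed — $q_{j,k}\ge 0$ and $\beta_{j,k}>0$ — follow immediately from the standing hypotheses, so there is no genuine obstacle. In fact the argument delivers the strictly stronger bound $I_3\ge a^2/c$ in the $n\to\infty$ limit, so the factor of four in the statement is deliberately slack and leaves room for subsequent estimates in Section~\ref{section:SystemEquation}.
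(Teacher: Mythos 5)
Your proof is correct, and it takes a genuinely different route from the paper. The paper's argument exploits convexity of $I_3(\beta,\cdot)$ together with $I_3(\beta,0)=0$ to rescale $\eta$ down to norm exactly $a$, then Taylor-expands each logarithm on the rescaled vector with explicit error control; this is where the three thresholds $c_1,c_2,c_3$ and hence the requirement $c>c_0(\beta,a)$ enter, and the final constant $a^2/(4c)$ comes from absorbing the Taylor remainders and using $\beta_{j,k}^{-1}\ge 1$. You instead observe that, writing $q_{j,k}=\beta_{j,k}+\eta_{j,k}c^{-1/2}=z_{j,k}/(2c)$, the constraint (\ref{y1_y4.CSum}) makes $\sum_{j\le k}q_{j,k}=\sum_{j\le k}\beta_{j,k}=M\in(1/2,1]$, so that $I_3(\beta,\eta)=M\,D(P\|Q)$ is exactly a scaled Kullback--Leibler divergence between the normalized vectors; Pinsker's inequality plus $\|\eta\|_1\ge\|\eta\|_2\ge a$ then gives $I_3\ge a^2/(2Mc)\ge a^2/(4c)$ with no Taylor expansion at all. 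Your computation of $M=1/2+1/(2n)$ from the definitions of $\beta_{j,k}$ checks out, the identification $\log\bigl(1+\eta_{j,k}\beta_{j,k}^{-1}c^{-1/2}\bigr)=\log(q_{j,k}/\beta_{j,k})$ is exact, and the boundary cases $q_{j,k}=0$ are handled by the usual convention $0\log 0=0$, under which both $I_3$ and the divergence remain well defined (as in the paper, one implicitly restricts to $\eta$ with $q_{j,k}\ge 0$, which is automatic for $\eta$ arising from edge counts via (\ref{eq:eta})). What your approach buys is uniformity: the bound holds for every $c$, not just $c>c_0$, and with the better constant $a^2/(2Mc)$, which tends to $a^2/c$. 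What the paper's route buys is that the same Taylor machinery, applied in the complementary regime $\|\eta\|_2\le a$, yields the two-sided quadratic approximation (\ref{eq:I3SecondOrder}) used in Section~\ref{subsection:optimizingI1I2}; Pinsker gives only a one-sided bound, so it replaces the proof of this lemma but not that later expansion. For the lemma as stated, your argument is complete.
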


\begin{proof}
$I_3$ is a convex function in $\eta$, taking value zero at $\eta=0$. Thus
\begin{align}\label{eq:LinearizedI3}
{a\over \|\eta\|_2}I_3(\beta,\eta)\ge I_3(\beta,{a\over \|\eta\|_2}\eta).
\end{align}
Using Taylor expansion $\log(1+b)=b-b^2/2+O(b^3)$ for some constant $b$ with $|b|<1$, we can find $c_1=c_1(\beta,a)$ large enough so that for all $c>c_1$
\begin{align*}
\Big|\log(1+{a \over \|\eta\|_2}\eta_{j,k}\beta_{j,k}^{-1}c^{-{1\over2}})-{a \over \|\eta\|_2}\eta_{j,k}\beta_{j,k}^{-1}c^{-{1\over2}}
+{1\over 2}\left({a \over \|\eta\|_2}|\eta_{j,k}|\beta_{j,k}^{-1}c^{-1/2}\right)^2\Big|\le c^{-{5\over 4}}\frac{\eta_{j,k}^2}{\| \eta \|_2^2},
\end{align*}
where the exponent $5/4$ is chosen somewhat arbitrary, and any exponent strictly larger than $1$ and less than $3/2$ can serve our purpose.
Thus, since
\begin{align*}
I_3(\beta,{a\over \|\eta\|_2}\eta)
=\sum_{1\le j\le k\le 4}(\beta_{j,k}+{a \over \|\eta\|_2}\eta_{j,k}c^{-{1\over2}})\log(1+{a \over \|\eta\|_2}\eta_{j,k}\beta_{j,k}^{-1}c^{-{1\over2}}),
\end{align*}
then
\begin{align*}
\Big|I_3(\beta,{a \over \|\eta\|_2}\eta)
&-\sum_{1\le j\le k\le 4}(\beta_{j,k}+{a\over \|\eta\|_2}\eta_{j,k} c^{-{1\over2}}){a\over \|\eta\|_2}\eta_{j,k}\beta_{j,k}^{-1}c^{-{1\over2}}\\
&+{1\over 2}\sum_{1\le j\le k\le 4}(\beta_{j,k}+{a\over \|\eta\|_2}\eta_{j,k} c^{-{1\over2}})\left({a\over \|\eta\|_2}\eta_{j,k}\beta_{j,k}^{-1}c^{-{1\over2}}\right)^2\Big| \\
&\le c^{-{5\over 4}}\sum_{1\le j\le k\le 4}(\beta_{j,k}+{a\over \|\eta\|_2}|\eta_{j,k}| c^{-{1\over2}})\frac{\eta_{j,k}^2}{\| \eta \|_2^2}. \\
\end{align*}
We can find $c_2=c_2(\beta,a)$ sufficiently large so that the expression on the right-hand side is at most
\begin{align*}
\le {1\over 8}{a^2 \over c}\sum_{1\le j\le k\le 4}\beta_{j,k}^{-1} \frac{\eta_{j,k}^2}{\| \eta \|_2^2}
\end{align*}
for all $c>c_2$.
On the other hand, applying constraint (\ref{y1_y4.CSum})
\begin{align*}
\sum_{1\le j\le k\le 4}(\beta_{j,k}&+{a\over \|\eta\|_2}\eta_{j,k} c^{-{1\over2}}){a\over \|\eta\|_2}\eta_{j,k}\beta_{j,k}^{-1}c^{-{1\over2}}
-{1\over 2}\sum_{1\le j\le k\le 4}(\beta_{j,k}+{a\over \|\eta\|_2}\eta_{j,k} c^{-{1\over2}})\left({a\over \|\eta\|_2}\eta_{j,k}\beta_{j,k}^{-1}c^{-{1\over2}}\right)^2 \\
&={a^2\over 2c}\sum_{1\le j\le k\le 4} \beta_{j,k}^{-1} \frac{\eta_{j,k}^2}{\| \eta \|_2^2}
-{1\over 2}\sum_{1\le j\le k\le 4}{a\over \|\eta\|_2}\eta_{j,k} c^{-{1\over2}}\left({a\over \|\eta\|_2}\eta_{j,k}\beta_{j,k}^{-1}c^{-{1\over2}}\right)^2
\end{align*}
We can find $c_3=c_3(\beta,a)$ sufficiently large so that the second term in the expression above is also at most
\begin{align*}
\le {1\over 8}{a^2 \over c}\sum_{1\le j\le k\le 4} \frac{\eta_{j,k}^2}{\| \eta \|_2^2} \beta_{j,k}^{-1}
\end{align*}
in absolute value for all $c>c_3$.

We obtain
\begin{align*}
I_3(\beta,{a\over \|\eta\|_2}\eta)
\ge {a^2\over 2c}\sum_{1\le j\le k\le 4} \frac{\eta_{j,k}^2}{\| \eta \|_2^2} \beta_{j,k}^{-1}-2{a^2\over 8c}\sum_{1\le j\le k\le 4} \frac{\eta_{j,k}^2}{\| \eta \|_2^2} \beta_{j,k}^{-1}
={a^2\over 4c}\sum_{1\le j\le k\le 4} \frac{\eta_{j,k}^2}{\| \eta \|_2^2} \beta_{j,k}^{-1}.
\end{align*}
Combining with (\ref{eq:LinearizedI3}) we conclude that for all $c>c_0\triangleq\max(c_1,c_2,c_3)$
\begin{align*}
I_3(\beta,\eta)&\ge {\|\eta\|_2a\over 4c}\sum_{1\le j\le k\le 4}\frac{\eta_{j,k}^2}{\| \eta \|_2^2} \beta_{j,k}^{-1} \ge {a^2\over 4c}.
\end{align*}
where we have used the fact that $\beta_{j,k} \in (0,1)$ for $1 \leq j \leq k \leq 4$ and $\| \eta \|_2 \geq a$.
\end{proof}

\subsection{Bounds on $I_2$}\label{subsection:bounds on I2}
Suppose $\mu_j=\mu_j(n,c)$ are positive integer sequences such that the limits $\lambda_j=\lim_n\mu_j/n$ exists, take rational values
and satisfy $\lambda_j=\Theta_c(c), \lambda_4-\lambda_1=O_c(\sqrt{c}), \lambda_2-\lambda_3=O_c(\sqrt{c})$.
Suppose further that the following limits exist:
\begin{align}
\label{b1b3}
b_1=\lim_{c\rightarrow \infty} \frac{\lambda_2-\lambda_3}{\sqrt{\lambda_2+\lambda_3}}, \; b_2=\lim_{c \rightarrow \infty} \sqrt{\frac{\lambda_2+\lambda_3}{\lambda_4+\lambda_1}}, \; b_3=\lim_{c \rightarrow \infty} \frac{\lambda_4-\lambda_1}{\sqrt{\lambda_4+\lambda_1}}
\end{align}
Let $(B_i^{(j)})_{i\in [n]}$, $j=1,\dots,4$, be four families of i.i.d. Poisson random variables with mean $\lambda_{j}$.
Given independent standard normal random variables $Z_1,Z_2$, let
\begin{align*}
P_2=\pr\left(Z_1+b_3 \geq b_2 \left\lvert Z_2+b_1 \right\rvert \right),
\end{align*}
and let
\begin{align}
P(\theta_1,\theta_2,b_1,b_2,b_3)&=\E[\exp(\theta_1Z_1+\theta_2Z_2)\mathbb{I}(Z_1+b_3 \geq b_2 \left\lvert Z_2+b_1 \right\rvert)] \notag\\
&={1\over 2\pi}\int_{t_1+b_3 \geq b_2 \left\lvert t_2+b_1 \right\rvert}\exp\left(\theta_1 t_1+\theta_2 t_2-{t_1^2+t_2^2\over 2}\right)dt_1dt_2.\notag    
\end{align}

\begin{lemma}\label{compute_K4}
The following large deviations limit exists
\begin{align}\label{eq:I4}
I_4\triangleq \lim_n\frac{1}{n} \log \mathbb{P} \left[ \sum_{i=1}^n B_{i}^{(j)}=\mu_j, j=1,\dots,4 \;
\middle | \; B_i^{(4)}-B_i^{(1)} \geq \lvert B_i^{(2)}-B_i^{(3)} \rvert, i \in [n] \right],
\end{align}
and satisfies

\begin{align}
I_4
\label{LPD_4d_ch1}
& = -\log P_2+\inf_{\theta_1,\theta_2}\log P(\theta_1,\theta_2,b_1,b_2,b_3) + o_c(1).
\end{align}
Furthermore $\inf_{\theta_1,\theta_2}\log P(\theta_1,\theta_2,b_1,b_2,b_3)$ is achieved at a unique point
$\theta_1^*,\theta_2^*$ which is also the unique solution to the system of equations
\begin{align} \label{unique_theta12}
\frac{\partial \log P(\theta_1,\theta_2,b_1,b_2,b_3)} {\partial \theta_1}=0, \quad \frac{\partial \log P(\theta_1,\theta_2,b_1,b_2,b_3)}{ \partial \theta_2}=0
\end{align}
\end{lemma}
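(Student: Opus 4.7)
The plan is to follow the same two-step strategy as in Lemma \ref{LPD_lemma}: first pass from four Poissons to four Gaussians via the four-dimensional analogue of Lemma \ref{appro_nrv}, and then apply the local large-deviations theorem (Theorem \ref{LCLTLDP}) to the resulting conditioned i.i.d.\ lattice vectors. The key structural observation that collapses the four-dimensional Legendre transform to the two-dimensional one in \eqref{LPD_4d_ch1} is that, after centering and standardizing each $B_i^{(j)}$ to $U_i^{(j)}=(B_i^{(j)}-\lambda_j)/\sqrt{\lambda_j}$, a rotation to the ``difference'' coordinates
\begin{align*}
\tilde U_i = \frac{\sqrt{\lambda_4}U_i^{(4)}-\sqrt{\lambda_1}U_i^{(1)}}{\sqrt{\lambda_4+\lambda_1}},\quad \tilde V_i = \frac{\sqrt{\lambda_2}U_i^{(2)}-\sqrt{\lambda_3}U_i^{(3)}}{\sqrt{\lambda_2+\lambda_3}}
\end{align*}
and the orthogonal ``sum'' coordinates $(S_i,T_i)$ transforms the conditioning event $B_i^{(4)}-B_i^{(1)}\ge|B_i^{(2)}-B_i^{(3)}|$ into (asymptotically in $c$) $\tilde U_i+b_3\ge b_2|\tilde V_i+b_1|$, involving only $(\tilde U_i,\tilde V_i)$. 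In the Gaussian limit the four coordinates are independent standard normals, so the conditioning leaves $(S_i,T_i)$ with its unconstrained Gaussian law, independent of $(\tilde U_i,\tilde V_i)$; the four sum constraints $\sum_i B_i^{(j)}=\mu_j$ rewrite in the new coordinates as $\sum_i\tilde U_i = \sum_i\tilde V_i = \sum_i S_i=\sum_i T_i=0$.

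By this independence, the conditional log-MGF in the $c\to\infty$ limit factorizes as
\begin{align*}
\Lambda(\theta_1,\theta_2,\theta_3,\theta_4) = \log P(\theta_1,\theta_2,b_1,b_2,b_3) - \log P_2 + \tfrac{1}{2}(\theta_3^2+\theta_4^2),
\end{align*}
where the last term is the unconditioned Gaussian log-MGF of $(S,T)$. Theorem \ref{LCLTLDP} then gives $I_4=\inf_{\theta\in\R^4}\Lambda(\theta)+o_c(1)$, and since the $(\theta_3,\theta_4)$-part is minimized to $0$ at the origin, we obtain $I_4 = -\log P_2 + \inf_{\theta_1,\theta_2}\log P(\theta_1,\theta_2,b_1,b_2,b_3) + o_c(1)$, which is \eqref{LPD_4d_ch1}. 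For uniqueness of the minimizing pair $(\theta_1^*,\theta_2^*)$, I would observe that $\log P$ is the log-Laplace transform of a sub-probability measure with density proportional to the standard bivariate Gaussian restricted to the open convex set $\{z_1+b_3\ge b_2|z_2+b_1|\}$, hence $C^\infty$ and strictly convex on $\R^2$ (the support has positive two-dimensional Lebesgue measure, so no nontrivial linear functional of $(Z_1,Z_2)$ is almost surely constant under this law); in the regime in which $I_4$ is finite the origin lies in the interior of the support, which makes $\log P$ coercive, and strict convexity plus coercivity yields a unique interior minimizer characterized by the gradient equations \eqref{unique_theta12}.

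The main technical obstacle is making the $c\to\infty$ Gaussian approximation of the conditional log-MGF uniform in $\theta$ near the minimizer, so that pointwise convergence of MGFs transfers to convergence of the infima and of the minimizing $\theta^*$. As in Lemma \ref{LPD_lemma}, this is handled by combining uniform integrability of the standardized Poisson variables (which yields convergence of MGFs and their derivatives on compact sets of $\theta$) with the strict convexity and coercivity of the limiting log-MGF, which localize $\theta^*$ to such a compact set; the residual correction is absorbed into the $o_c(1)$ term of \eqref{LPD_4d_ch1}.
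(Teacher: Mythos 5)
Your proposal is correct, and while the overall skeleton matches the paper's (standardize the Poissons, apply Theorem~\ref{LCLTLDP} to the conditioned lattice vectors, pass to the Gaussian limit of the conditional MGF as in Lemma~\ref{appro_nrv}, then characterize the minimizer by a vanishing gradient), the device you use to collapse the four-dimensional Legendre problem to the two-dimensional one in \eqref{LPD_4d_ch1} is genuinely different. The paper stays in the original coordinates, writes the stationarity conditions \eqref{eq:Optimaltheta} for the full four-dimensional $\log M(\theta)$, and then exploits the symmetry of the limiting domain $\mathcal{D}_1$ under $(t_1,t_2,t_3,t_4)\mapsto(-t_3,-t_4,-t_1,-t_2)$ together with uniqueness of the minimizer to force $\theta_3^*=-\theta_1^*$, $\theta_4^*=-\theta_2^*$, after which $M(\theta^*)$ is recognized as the two-dimensional quantity $P(\theta_1,\theta_2,b_1,b_2,b_3)/P_2$. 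You instead rotate to difference/sum coordinates up front, note that the conditioning event involves only the difference pair (exactly, for every $c$, with coefficients converging to $b_1,b_2,b_3$), and use independence in the Gaussian limit to factor the limiting conditional log-MGF into the constrained two-dimensional piece plus the explicit $\tfrac12(\theta_3^2+\theta_4^2)$, whose infimum sits at the origin; this exhibits the full infimum over $\R^4$ directly rather than only its value at the symmetric stationary point, and is arguably cleaner. Two remarks. First, after the rotation the support is a rotated lattice, which no longer fits the axis-aligned lattice hypothesis of Theorem~\ref{LCLTLDP} as stated; this is cosmetic --- apply the theorem in the original coordinates (as the paper does) and perform the rotation only at the level of the limiting Gaussian MGF and its minimization. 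Second, your explicit treatment of attainment (strict convexity from the two-dimensionality of the restricted Gaussian law, plus coercivity when the origin lies in the interior of the wedge, i.e.\ essentially $b_3>b_2|b_1|$, which is the regime singled out by \eqref{z_ij_cons}) and of the uniformity needed to interchange $c\to\infty$ with the infimum is more careful than the paper, which absorbs both points into the appeal to strict convexity and the $o_c(1)$ term; your version makes explicit a hypothesis the paper uses implicitly, and the gradient characterization \eqref{unique_theta12} then follows exactly as stated.
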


\begin{proof}
Let $X_i^{(j)}=\frac{B_i^{(j)}-\lambda_j}{\sqrt{\lambda_j}}$, $i=1,\dots,n$ and $j=1,2,3,4$. The probability in (\ref{eq:I4}) is rewritten by
\begin{align}
\label{lpd_4d}
\mathbb{P} \left[\sum_{i=1}^n X_i^{(j)}=0, j=1,\dots,4 \middle| \mathcal{E}_i , i\in[n]  \right]
\end{align}
where by substituting $X_i^{(j)}$ for $B_i^{(j)}$ the event $\{ B_i^{(4)}-B_i^{(1)} \geq \lvert B_i^{(2)}-B_i^{(3)} \rvert \}$ is equivalent to
$$
\mathcal{E}_i=\left\{\frac{\sqrt{\lambda_4}X_i^{(4)}-\sqrt{\lambda_1}X_i^{(1)}}{\sqrt{\lambda_4+\lambda_1}}+\frac{\lambda_4-\lambda_1}{\sqrt{\lambda_4+\lambda_1}} \geq \sqrt{\frac{\lambda_2+\lambda_3}{\lambda_4+\lambda_1}} \left\lvert \frac{\sqrt{\lambda_2}X_i^{(2)}-\sqrt{\lambda_3}X_i^{(3)}}{\sqrt{\lambda_2+\lambda_3}}+\frac{\lambda_2-\lambda_3}{\sqrt{\lambda_2+\lambda_3}}  \right\rvert \right\}.
$$
Applying Theorem \ref{LCLTLDP} yields
$$
\lim_{n\rightarrow \infty} \frac{1}{n} \log \mathbb{P} \left[\sum_{i=1}^n X_i^{(j)}=0, j=1,\dots,4 \middle| \mathcal{E}_i , i\in[n]  \right]= -I(0,0,0,0)
$$
where $\theta=(\theta_1,\ldots,\theta_4)$, $I(x_1,x_2,x_3,x_4)=\sup_{\theta \in \mathbb{R}^4}(\sum_{i=1}^4 \theta_i x_i-\log(M_c(\theta)))$
and and $M_c(\theta)$ are respectively the large deviations rate function and  the MGF of $(X_1^{(1)},X_1^{(2)},X_1^{(3)},X_1^{(4)})$
conditional on $\mathcal{E}_1$. Specifically,
\begin{align*}
M_c(\theta)=\E[\exp(\sum_{1\le j\le 4}\theta_j X_1^{(j)})| \mathcal{E}_1].
\end{align*}
Since $X_1^j$ converges in distribution to a standard normal random variable and  $\lambda_2/(\lambda_2+\lambda_4),\lambda_1/(\lambda_1+\lambda_3)$
converge to $1/2$ as $c\rightarrow\infty$, then  $\pr(\mathcal{E}_1)$ converges to the probability of the event
\begin{align*}
\mathcal{D}_1\triangleq\biggl\{\frac{Z_4}{\sqrt{2}}-\frac{Z_1}{\sqrt{2}}+b_3 \geq b_2 \left\lvert \frac{Z_2}{\sqrt{2}}-\frac{Z_3}{\sqrt{2}}+b_1 \right\rvert \biggr\},
\end{align*}
where $Z_j, 1\le j\le 4$ are four independent standard normal random variables. We recognize $\pr(\mathcal{D}_1)$ as $P_2$ since
$\frac{Z_2}{\sqrt{2}}-\frac{Z_4}{\sqrt{2}}$ and $\frac{Z_1}{\sqrt{2}}-\frac{Z_3}{\sqrt{2}}$ are independent standard normal random variables.
Thus as $c\rightarrow\infty$,
\begin{align*}
M_c(\theta) &\rightarrow
\mathbb{E}[\exp(\sum_{i=1}^4 \theta_i Z_1^{(i)}) \mid \mathcal{D}_1] \\
&=\frac{1}{P_2} \int_{\mathcal{D}_1} \frac{1}{(2\pi)^2}\exp \left(\sum_{i=1}^4 \left(\theta_i t_i-\frac{t_i^2}{2}\right)  \right)dt_1 \cdots dt_4\\
&\triangleq M(\theta),
\end{align*}
where for convenience we also let
\begin{align*}
\mathcal{D}_1=\biggl\{ (t_1,\ldots,t_4)\in\R^4:\frac{t_2}{\sqrt{2}}-\frac{t_4}{\sqrt{2}}+b_3 \geq b_2 \left\lvert \frac{t_1}{\sqrt{2}}-\frac{t_3}{\sqrt{2}}+b_1 \right\rvert \biggr\}.
\end{align*}
Thus from this point we focus on the optimization problem
\begin{align*}
\sup_{\theta}\left(-\log M(\theta)\right)=-\inf_{\theta}\log M(\theta).
\end{align*}
We again use the fact that since $M(\theta)$ is the MGF which is finite for all $\theta$, then $\log M(\theta)$ is strictly convex and the unique optimal solution
is achieved at a unique point $\theta^*$ where the gradient vanishes. Thus the defining identities for $\theta^*$ are
\begin{align}\label{eq:Optimaltheta}
{\partial \log M(\theta) \over \theta_j}\Big|_{\theta=\theta^*}&=
M^{-1}(\theta^*) \int_{\mathcal{D}_1} \frac{t_j}{(2\pi)^2}\exp \left(\sum_{i=1}^4 \left(\theta_i^* t_i-\frac{t_i^2}{2}\right)  \right)dt_1 \cdots dt_4 \\
&=0, \notag
\end{align}
for $j=1,\ldots,4$, where we use the fact that $P_2$ does not depend on $\theta$ and thus disappears in the gradient.

Now we take advantage of a certain symmetry of $\mathcal{D}_1$. Note that $(t_1,t_2,t_3,t_4)\in\mathcal{D}_1$ iff
$(-t_3,-t_4,-t_1,-t_2)\in\mathcal{D}_1$. This implies that $(\theta_1^*,\theta_2^*,\theta_3^*,\theta_4^*)$ solves (\ref{eq:Optimaltheta})
iff so does $(-\theta_3^*,-\theta_4^*,-\theta_1^*,-\theta_2^*)$. The uniqueness of the optimal solution implies that $\theta_3^*=-\theta_1^*$ and $\theta_4^*=-\theta_2^*$.
In this case again since $(Z_2-Z_4)/\sqrt{2}$ and $(Z_1-Z_3)/\sqrt{2}$ are standard normal when $Z_1,\ldots,Z_4$ are independent standard normal, we recognize
$M(\theta^*)$ as
\begin{align*}
\E[\exp(\theta_1^* Z_1+\theta_2^* Z_2)|Z_1+b_3\ge b_2|Z_2+b_1|]
&=P_2^{-1}\int_{t_1+b_3\ge b_2|t_2+b_1|}{1\over 2\pi}\exp\left(\theta_1^*t_1+\theta_2^*t_2-{t_1^2+t_2^2\over 2}\right)dt_1dt_2.
\end{align*}
We recognize this expression as $P(\theta_1,\theta_2,b_1,b_2,b_3)/P_2$. Hence, (\ref{unique_theta12}) follows from (\ref{eq:Optimaltheta}). This completes the proof.
\end{proof}

We now establish certain properties of the function $P(\theta_1,\theta_2,b_1,b_2,b_3)$. For $b_1=0$, we also give the characterization of $\theta_1$ and $\theta_2$ which obtain the infimum of $\log P(\theta_1,\theta_2,0,b_2,b_3)$ over $\theta_1$ and $\theta_2$.
\begin{lemma}\label{lemma:theta1b1}
The following inequality holds for every $\theta_1,b_1,b_3\in\R$ and $b_2\in\R_+$:
\begin{align}\label{eq:theta1b1}
& \inf_{\theta_2}  P(\theta_1,\theta_2,b_1,b_2,b_3) \leq \inf_{\theta_2} \log P(\theta_1,\theta_2,0,b_2,b_3)=P(\theta_1,0,0,b_2,b_3).
\end{align}
Furthermore, $\log P(\theta_1,0,0,b_2,b_3)$ is a concave function in $b_3$ for every $\theta_1$, and hence $\inf_{\theta_1} \log P(\theta_1,0,0,b_2,b_3)$ is also a concave function in $b_3$. Finally, $\theta_1^*$ defined by $\theta_1^*=\mathrm{arginf}_{\theta_1} \log P(\theta_1,0,0,b_2,b_3)$ is the unique solution to
\begin{align} \label{unique_theta1}
\theta_1+\frac{1}{Q(\theta_1,b_2,b_3)}\int_0^{\infty}  \exp \left(-\frac{t_2^2+(b_2 t_2-\theta_1-b_3)^2}{2} \right) dt_2=0
\end{align}
where $Q(\theta_1,b_2,b_3)$ is given in (\ref{Q_exp}).
\end{lemma}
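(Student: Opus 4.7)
First I would dispatch the rightmost equality in (\ref{eq:theta1b1}). Splitting the defining integral for $P(\theta_1,\theta_2,0,b_2,b_3)$ at $t_2=0$ and applying the change of variable $t_2\mapsto -t_2$ on the negative half reassembles the factor $e^{\theta_2 t_2}$ into $\cosh(\theta_2 t_2)$:
\begin{align*}
P(\theta_1,\theta_2,0,b_2,b_3)=\frac{1}{\pi}\int_0^\infty\!\!\int_{b_2 t_2-b_3}^\infty \exp\!\Big(\theta_1 t_1-\tfrac{t_1^2+t_2^2}{2}\Big)\cosh(\theta_2 t_2)\,dt_1\,dt_2.
\end{align*}
Since $\cosh\ge 1$ with equality at $0$, this quantity is minimized at $\theta_2=0$, delivering the equality in (\ref{eq:theta1b1}) and, as a bonus, showing that $g(u):=\log P(\theta_1,u,0,b_2,b_3)$ is an even function of $u$.

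For the main inequality I would reduce the $b_1\ne 0$ case to the $b_1=0$ case by a Gaussian shift. Substituting $s_2=t_2+b_1$ in the definition of $P(\theta_1,\theta_2,b_1,b_2,b_3)$ and completing the square on $(s_2-b_1)^2$ yields
\begin{align*}
\log P(\theta_1,\theta_2,b_1,b_2,b_3)=-\theta_2 b_1-\tfrac{b_1^2}{2}+\log P(\theta_1,\theta_2+b_1,0,b_2,b_3),
\end{align*}
so, relabeling $u=\theta_2+b_1$,
\begin{align*}
\inf_{\theta_2}\log P(\theta_1,\theta_2,b_1,b_2,b_3)=\tfrac{b_1^2}{2}+\inf_{u\in\R}\{-ub_1+g(u)\}.
\end{align*}
The key step is the pointwise bound $g(u)\le g(0)+u^2/2$ for all $u\in\R$. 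To prove it I would complete the square on $ut_2-t_2^2/2$ in the definition of $P(\theta_1,u,0,b_2,b_3)$ to write $P(\theta_1,u,0,b_2,b_3)=e^{u^2/2}\widetilde P(\theta_1,u)$, where after the substitution $s_2=t_2-u$, $\widetilde P$ is the integral of the log-concave Gaussian density $\exp(\theta_1 s_1-s_1^2/2-s_2^2/2)$ over the slice $\{(s_1,s_2):s_1+b_3\ge b_2|s_2+u|\}$. The full region in $(s_1,s_2,u)$ is the intersection of the two half-spaces $s_1+b_3\ge\pm b_2(s_2+u)$, hence convex, so Pr\'ekopa--Leindler yields log-concavity of $\widetilde P$ in $u$; consequently $g''(u)=1+(\log\widetilde P)''(u)\le 1$ globally. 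Combined with $g'(0)=0$ from the evenness of $g$, integrating twice yields $g(u)\le g(0)+u^2/2$, and minimizing $-ub_1+u^2/2$ at $u=b_1$ cancels the $b_1^2/2$ term, producing the bound $\inf_{\theta_2}\log P(\theta_1,\theta_2,b_1,b_2,b_3)\le g(0)=\log P(\theta_1,0,0,b_2,b_3)$.

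For concavity of $\log P(\theta_1,0,0,b_2,b_3)$ in $b_3$, the same Pr\'ekopa--Leindler machinery applies directly: the integrand $\exp(\theta_1 t_1-(t_1^2+t_2^2)/2)\mathbb{I}(t_1+b_3\ge b_2|t_2|)$ is log-concave in $(t_1,t_2,b_3)$, since the constraint region is the intersection of the half-spaces $t_1+b_3\ge\pm b_2 t_2$, and the $(t_1,t_2)$-marginal inherits log-concavity. The assertion that $\inf_{\theta_1}\log P$ is also concave in $b_3$ is a general fact: the pointwise infimum of a family of concave functions is concave, since the hypograph of the infimum is the intersection of their (convex) hypographs.

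Finally, for the characterization of $\theta_1^*$, since $\log P(\theta_1,0,0,b_2,b_3)$ is a log moment generating function restricted to a coordinate line, it is strictly convex in $\theta_1$, so the infimum is attained at the unique stationary point. Using $P(\theta_1,0,0,b_2,b_3)=\pi^{-1}e^{\theta_1^2/2}Q(\theta_1,b_2,b_3)$ and evaluating the inner $z_1$-integral in $\partial_{\theta_1}Q$ via the fundamental theorem of calculus,
\begin{align*}
\int_{b_2 z_2}^\infty(z_1-\theta_1-b_3)\,e^{-(z_1-\theta_1-b_3)^2/2}\,dz_1=e^{-(b_2 z_2-\theta_1-b_3)^2/2},
\end{align*}
the stationarity equation $\theta_1+Q^{-1}\partial_{\theta_1}Q=0$ reduces precisely to (\ref{unique_theta1}). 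I expect the main obstacle to be the Pr\'ekopa--Leindler step in the second paragraph: correctly isolating the factor $e^{u^2/2}$ so that the residual $\widetilde P$ is log-concave in $u$ is precisely what converts the bound $g''\le 1$ into the global inequality $g(u)\le g(0)+u^2/2$ needed to absorb the spurious $b_1^2/2$ generated by the Gaussian shift.
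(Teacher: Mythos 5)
Your proposal is correct, and its treatment of the first claim takes a genuinely different route from the paper, while the remaining parts coincide with it. The concavity of $\log P(\theta_1,0,0,b_2,b_3)$ in $b_3$ via Pr\'ekopa--Leindler (the paper constructs the functions $h,g_1,g_2$ explicitly, you invoke the equivalent marginal-of-log-concave statement), the concavity of $\inf_{\theta_1}\log P$ as a pointwise infimum of concave functions, and the derivation of (\ref{unique_theta1}) by differentiating $\pi^{-1}e^{\theta_1^2/2}Q$ and evaluating the inner integral, with uniqueness from strict convexity in $\theta_1$, are essentially the paper's argument. For the inequality in $b_1$, the paper rewrites $P$ as in (\ref{P_expand}), computes $\partial P/\partial b_1$ in closed form and reads off its sign to conclude that $b_1=-\theta_2$ maximizes $P$ over $b_1$, then gets the $b_1=0$ equality from symmetry in $\theta_2$ plus convexity; you instead use the exact tilt identity $\log P(\theta_1,\theta_2,b_1,b_2,b_3)=-\theta_2 b_1-b_1^2/2+g(\theta_2+b_1)$ with $g(u)=\log P(\theta_1,u,0,b_2,b_3)$, the quadratic bound $g(u)\le g(0)+u^2/2$ obtained from Pr\'ekopa log-concavity in the shifted tilt variable together with $g'(0)=0$, and the $\cosh$ representation, which gives the $b_1=0$ equality from $\cosh\ge 1$ with no convexity needed. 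Both are valid: the paper's computation is elementary and pins down the exact maximizer in $b_1$, while yours is more structural, isolating how $b_1$ enters (a tilt of $\theta_2$ plus a Gaussian factor) and running the whole lemma on the single Pr\'ekopa--Leindler tool already used for $b_3$. Two cosmetic remarks: your identity already yields the claim in one line by taking $\theta_2=0$, namely $\inf_{\theta_2}\log P\le\log P(\theta_1,0,b_1,b_2,b_3)=g(b_1)-b_1^2/2\le g(0)$, so the minimization over $u$ is unnecessary; and the appeal to $g''$ can be replaced by the tangent-line inequality for the concave function $g(u)-u^2/2$ at $u=0$ (smoothness does hold, so this is not a gap). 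Like the paper, you implicitly assume the infimum over $\theta_1$ is attained before invoking stationarity; since the paper argues at the same level, this is not a defect relative to the source.
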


\begin{proof}
First, we claim that $P(\theta_1,\theta_2,b_1,b_2,b_3) \le P(\theta_1,\theta_2,-\theta_2,b_2,b_3)$. In order to show this, we rewrite $P(\theta_1,\theta_2,b_1,b_2,b_3)$ in another form. We use the change of variables $z_1=t_1-\theta_1$ and $z_2=t_2-\theta_2$
\begin{align*}
&P(\theta_1,\theta_2,b_1,b_2,b_3) \\
&=\frac{1}{2\pi} \exp((\theta_1^2+\theta_2^2)/2) \int_{z_1+\theta_1+b_3\ge b_2|z_2+\theta_2+b_1|} \exp(-(z_1^2+z_2^2)/2)dz_1 dz_2  \\
&=\frac{1}{2\pi} \exp((\theta_1^2+\theta_2^2)/2) \bigg(\int_{-\theta_2-b_1}^{\infty} dz_2 \int_{b_2(z_2+\theta_2+b_1)-\theta_1-b_3}^{\infty} \exp(-(z_1^2+z_2^2)/2) dz_1  \\
& \quad  + \int_{-\infty}^{-\theta_2-b_1} dz_2 \int_{-b_2(z_2+\theta_2+b_1)-\theta_1-b_3}^{\infty} \exp(-(z_1^2+z_2^2)/2) dz_1 \bigg)
\end{align*}
We use the change of variables $t_1=z_1+\theta_1+b_3$ and $t_2=z_2+\theta_2+b_1$ for the first integral, and the change of variables $t_1=z_1+\theta_1+b_3$ and $t_2=-z_2-\theta_2-b_1$ for the second integral. The integral above is
\begin{align*}
\frac{1}{2\pi} \exp((\theta_1^2+\theta_2^2)/2) \int_{0}^{\infty} dt_2 \int_{b_2 t_2}^{\infty} & \Big( \exp(-((t_1-\theta_1-b_3)^2+(t_2-\theta_2-b_1)^2)/2) \\
                                                                                              & +\exp(-((t_1-\theta_1-b_3)^2+(t_2+\theta_2+b_1)^2)/2) \Big) dt_1
\end{align*}
which we rewrite as
\begin{align} \label{P_expand}
\frac{1}{2\pi} \exp((\theta_1^2+\theta_2^2)/2) \int_{0}^{\infty} \exp(-(t_1-\theta_1-b_3)^2/2)dt_1 \int_{0}^{t_1/b_2} & \Big( \exp(-(t_2-\theta_2-b_1)^2/2)  \nonumber \\
                                                                                              & +\exp(-(t_2+\theta_2+b_1)^2/2) \Big) dt_2
\end{align}
Its partial derivative with respect to $b_1$ gives
\begin{align*}
&{\partial  P(\theta_1,\theta_2,b_1,b_2,b_3) \over \partial b_1} \\
&=\frac{1}{2\pi} \exp((\theta_1^2+\theta_2^2)/2) \int_{0}^{\infty} \exp \left(- \left((t_1-\theta_1-b_3)^2+(t_1/b_2-\theta_2-b_1)^2 \right)/2 \right)\big(\exp(-2(\theta_2+b_1)t_1/b_2 )-1 \big) dt_1
\end{align*}
Notice that $b_2 \in \mathbb{R}_{+}$, it is easy to see that the derivative above is positive when $b_1<-\theta_2$, zero when $b_1=-\theta_2$, and negative when $b_1>-\theta_2$. Hence, $b_1=-\theta_2$ maximizes $P(\theta_1,\theta_2,b_1,b_2,b_3)$, i.e.
\begin{align*}
P(\theta_1,\theta_2,b_1,b_2,b_3) \leq P(\theta_1,\theta_2,-\theta_2,b_2,b_3)
\end{align*}
When $b_1=-\theta_2$, from (\ref{P_expand}) it is easy to see that $\theta_2=0$ minimizes $P(\theta_1,\theta_2,-\theta_2,b_2,b_3)$, which, together with the inequality above, gives
$$
\inf_{\theta_2} P(\theta_1,\theta_2,b_1,b_2,b_3) \leq \inf_{\theta_2} P(\theta_1,\theta_2,-\theta_2,b_2,b_3)= P(\theta_1,0,0,b_2,b_3),
$$
and hence the first inequality in (\ref{eq:theta1b1}) follows.

Inherited from the strict convexity of $\log M(\theta)$, $\log P(\theta_1,\theta_2,0,b_2,b_3)$ is also strictly convex in $\theta_2$. Also, (\ref{P_expand}) implies that for $b_1=0$, $P(\theta_1,\theta_2,0,b_2,b_3)$ is symmetric about $\theta_2=0$, and thus the derivative of $\log P(\theta_1,\theta_2,0,b_2,b_3)$ with respect to $\theta_2$ is always $0$ at $\theta_2=0$. These two facts establish the equality in (\ref{eq:theta1b1}).

Next, we prove the concavity of $\inf_{\theta_1}\log P(\theta_1,0,0,b_2,b_3)$ in $b_3$. Let $f(x,y)=\exp(-(x^2+y^2)/2)$, which is log-concave. After changing the order of integration in (\ref{P_expand}), we have
\begin{align}
\label{P_00}
P(\theta_1,0,0,b_2,b_3)= &\frac{1}{\pi} \exp(\theta_1^2/2)\int_{0}^{\infty} \int_{b_2 t_2}^{\infty} \exp(-((t_1-\theta_1-b_3)^2+t_2^2)/2) dt_1 dt_2  \\
                       = &\frac{1}{\pi} \exp(\theta_1^2/2) \int_{0}^{\infty} \int_{b_2 t_2}^{\infty} f(t_1-\theta_1-b_3,t_2) dt_1 dt_2 \nonumber
\end{align}
The log-concavity of $f(x,y)$ implies that for all $\theta_1 \in \mathbb{R}$, $b_3^{(1)}$, $b_3^{(2)} \in \mathbb{R}$, $(t_1,t_2)$, $(\bar{t}_1,\bar{t}_2) \in \mathbb{R}^2$ and $\lambda \in (0,1)$, we have
\begin{align}
\label{log_concave_f2}
& f(\lambda(t_1-b_3^{(1)}-\theta_1)+(1-\lambda)(\bar{t}_1-b_3^{(2)}-\theta_1), \lambda t_2 + (1-\lambda)\bar{t}_2) \nonumber \\
& \geq f^{\lambda}(t_1-b_3^{(1)}-\theta_1,t_2)f^{1-\lambda}(\bar{t}_1-b_3^{(2)}-\theta_1,\bar{t}_2)
\end{align}
Let $\mathcal{S}=\{(t_1,t_2) \in \mathbb{R}^2: t_1 \geq b_2 t_2,  t_2 \geq 0 \}$, and
\begin{align}
\label{hg1_g22}
& h(t_1,t_2) = f(t_1-(\lambda b_3^{(1)} + (1-\lambda)b_3^{(2)})-\theta_1, t_2) \mathbf{1}_{\mathcal{S}}(t_1,t_2) \nonumber \\
& 	g_1(t_1,t_2) = f(t_1-b_3^{(1)}-\theta_1,t_2) \mathbf{1}_{\mathcal{S}}(t_1,t_2) \nonumber \\
& g_2(t_1,t_2) = f(t_1-b_3^{(2)}-\theta_1, t_2) \mathbf{1}_{\mathcal{S}}(t_1,t_2) .
\end{align}
where $\mathbf{1}_{\mathcal{S}}(t_1,t_2)$ is the indicator function. $h(t_1,t_2)$, $g_1(t_1,t_2)$ and $g_2(t_1,t_2)$ are non-negative functions, which by (\ref{log_concave_f2}) satisfy
$$
h(\lambda(t_1,t_2)+(1-\lambda)(\bar{t}_1,\bar{t}_2)) \geq g_1^{\lambda}(t_1, t_2)g_2^{1-\lambda}(\bar{t}_1, \bar{t}_2)
$$
for any $(t_1,t_2), (\bar{t}_1,\bar{t}_2) \in \mathbb{R}^2$. Then Pr\'{e}kopa--Leindler inequality, together with an argument similar to the one for the proof of Lemma \ref{maximize_K2d}, yields that $\log P(\theta_1,0,0,b_2,b_3)$ is a concave function for each $\theta_1$, and further that $\inf_{\theta_1}\log P(\theta_1,0,0,b_2,b_3)$ is also a concave function in $b_3$.

(\ref{eq:theta1b1}) and Lemma \ref{compute_K4} yields that the $\theta_1^*$ and $\theta_2^{*}$ which obtains the infimum of $\inf_{\theta_1,\theta_2} P(\theta_1,\theta_2,0,b_2,b_3)$ is $\theta_2^*=0$ and $\theta_1^*$ is uniquely determined by setting the derivative of $\log P(\theta_1,0,0,b_2,b_3)$ with respect to $\theta_1$ to be $0$. Recall that $P(\theta_1,0,0,b_2,b_3)$ is given in (\ref{P_00}),
\begin{align*}
&\frac{\partial \log P(\theta_1,0,0,b_2,b_3)}{\partial \theta_1} \\
&=\theta_1 + \frac{1}{Q(\theta_1,b_2,b_3)} \frac{\partial Q(\theta_1,b_2,b_3)}{\partial \theta_1} \\
&=\theta_1 + \frac{1}{Q(\theta_1,b_2,b_3)} \int_0^{\infty} \int_{b_2 t_2}^{\infty} \exp \left( -\frac{t_2^2+(t_1-\theta_1-b_3)^2}{2}\right)( t_1-\theta_1-b_3) dt_1  dt_2.
\end{align*}
Then (\ref{unique_theta1}) follows from the inner integral over $t_1$.
\end{proof}
For shortness, we write $P(\theta_1,b_2,b_3)$ for $P(\theta_1,0,0,b_2,b_3)$ and we recall that this is definition of $P(\theta_1,b_2,b_3)$ as given in (\ref{eq:Pfunction}).

Recall from the proof of Lemma \ref{lemma:theta1b1}, we restate here for convenience the identity (\ref{P_00})
\begin{align}
\label{P_002}
P(\theta_1,b_2,b_3)= &\frac{1}{\pi} \exp(\theta_1^2/2)\int_{0}^{\infty} \int_{b_2 t_2}^{\infty} \exp(-((t_1-\theta_1-b_3)^2+t_2^2)/2) dt_1 dt_2.
\end{align}

\subsection{Computing the limit of $\lim_n n^{-1}\log (I_1I_2)$.}\label{subsection:optimizingI1I2}
In this subsection, we first use a special setting to claim that the maximum of $\lim_n n^{-1}\log (I_1I_2)$ is obtained when $\|\eta\|\leq a$ for some positive constant $a$,
and then consider the problem of maximizing of $\lim_n n^{-1}\log (I_1I_2)$.

Note that setting $\beta=1/4$, $\eta_{1,4}=\eta_{2,3}=x/4$,  $\eta_{1,2}=\eta_{1,3}=\eta_{2,4}=\eta_{3,4}=0$, and $\eta_{j,j}=-x/8, 1\le j\le 4$,
we obtain $\eta$ which satisfies all constraints (\ref{y1_y4.C1})-(\ref{y1_y4.CSum}) and
\begin{align*}
I_3(1/4,\eta)&=
\sum_{1\le j\le k\le 4}(\beta_{j,k}+\eta_{j,k} c^{-{1\over2}})(\eta_{j,k}\beta_{j,k}^{-1}c^{-{1\over2}}-(1/2)(\eta_{j,k}\beta_{j,k}^{-1}c^{-{1\over2}})^2)+o_c(c^{-1}) \\
&=\sum_{1\leq j \leq k \leq 4} \frac{1}{2}\eta_{j,k}^2 \beta_{j,k}^{-1} c^{-1} + o_c(c^{-1}) = 2x^2/c + o_c(c^{-1}) .
\end{align*}
Substituting $I_3(1/4,\eta)$ back to (\ref{I_12}) and then to (\ref{I_1}), we have that
\begin{align} \label{I_1_beta14}
\lim_{n\rightarrow \infty}\frac{1}{n}\log I_1=2\log 2  - 4x^2 + o_c(1)
\end{align}
We now recall notation (\ref{eq:eta}). Applying (\ref{eq:K1}), Lemma~\ref{lemmaK4d}, Lemma~\ref{compute_K4}, Lemma~\ref{lemma:theta1b1} and canceling $\log P_2$ with $\log \pr(\mathcal{B}_i)$  we obtain
\begin{align*}
&\lim_{n\rightarrow \infty}{1\over n}\log K( n/4, z_{1,2},z_{1,4},z_{1,3},2z_{1,1}) \\
&=\frac{1}{4} \inf_{\theta_1,\theta_2}
\log P\left(\theta_1,\theta_2,{2(\eta_{1,2}-\eta_{1,3})\over \sqrt{2\beta(\beta_{1,2}+\beta_{1,3})}},{\sqrt{\beta_{1,2}+\beta_{1,3}}\over \sqrt{\beta_{1,4}+2\beta_{1,1}}},
{2(\eta_{1,4}-2\eta_{1,1})\over \sqrt{2\beta(\beta_{1,4}+2\beta_{1,1})}}\right)+o_c(1) \\
&=\frac{1}{4} \inf_{\theta_1,\theta_2}
\log P(\theta_1,\theta_2,0,1,4x)+o_c(1) \\
&={1 \over 4} \log P\left(\theta_1^*,0,0,1, 4x \right)+o_c(1).
\end{align*}
where by (\ref{unique_theta1}), $\theta_1^*$ is the unique solution to
\begin{align}\label{theta_beta14}
\theta_1+\frac{1}{Q(\theta_1,1,4x)} \int_0^{\infty} \exp\left( -(t_2^2+(t_2-\theta_1-4x)^2)/2 \right)dt_2=0
\end{align}
where
\begin{align*}
Q(\theta_1,1,4x)=\int_0^{\infty} \int_{t_2}^{\infty} \exp(-((t_1-\theta_1-4x)^2+t_2^2)/2) dt_1 dt_2
\end{align*}
To further simplify $Q(\theta_1,1,4x)$, we introduce the following lemma.
\begin{lemma}
\label{basis_cal}
For $a\in \mathbb{R}$,
\begin{align}
\label{dinte1}
\frac{1}{\pi} \int_{0}^{\infty}  \int_{t_2}^{\infty} \exp \left(-((t_1-a)^2+t_2^2)/2 \right) dt_1 dt_2=(1+\erf(a/2))^2/4
\end{align}
\end{lemma}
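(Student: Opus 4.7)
The plan is to interpret the double integral as a Gaussian probability and decouple it via an orthogonal change of variables. Observe that $\tfrac{1}{2\pi}\exp(-((t_1-a)^2+t_2^2)/2)$ is the joint density of an independent pair $(T_1,T_2)$ with $T_1\sim N(a,1)$ and $T_2\sim N(0,1)$. Hence, writing $X=T_1-a$ and $Y=T_2$, which are independent standard normals,
\begin{align*}
\frac{1}{\pi}\int_0^\infty\!\int_{t_2}^\infty \exp(-((t_1-a)^2+t_2^2)/2)\,dt_1\,dt_2
  = 2\,\mathbb{P}\left[Y\ge 0,\ Y-X\le a\right].
\end{align*}

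Next I would rotate to independent coordinates. Set $W=(Y-X)/\sqrt{2}$ and $Z=(Y+X)/\sqrt{2}$, which are independent standard normals. Then $Y=(W+Z)/\sqrt{2}$, so $\{Y\ge 0\}=\{Z\ge -W\}$, while $\{Y-X\le a\}=\{W\le a/\sqrt{2}\}$. Conditioning on $W$ and using independence of $Z$,
\begin{align*}
2\,\mathbb{P}\left[W\le a/\sqrt{2},\ Z\ge -W\right]
  = 2\int_{-\infty}^{a/\sqrt{2}} \phi(w)\,\Phi(w)\,dw
  = \Phi(a/\sqrt{2})^2,
\end{align*}
where $\phi,\Phi$ are the standard normal density and CDF, and I have used the antiderivative $\frac{d}{dw}[\Phi(w)^2/2]=\phi(w)\Phi(w)$ together with $\Phi(-\infty)=0$.

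Finally, the relation $\Phi(x)=\tfrac12(1+\erf(x/\sqrt{2}))$, evaluated at $x=a/\sqrt{2}$, gives $\Phi(a/\sqrt{2})=(1+\erf(a/2))/2$, and squaring yields the desired $(1+\erf(a/2))^2/4$.

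There is no real obstacle: the computation is a textbook exercise once one spots the right rotation. If one preferred to avoid the probabilistic language, an equally short alternative is to differentiate both sides in $a$: the inner integral in $t_1$ collapses to $\exp(-(t_2-a)^2/2)$, completing the square in $t_2$ leads to $F'(a)=\tfrac{1}{2\sqrt{\pi}}e^{-a^2/4}(1+\erf(a/2))$, which agrees with the derivative of the right-hand side, and both sides vanish as $a\to-\infty$.
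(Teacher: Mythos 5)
Your proof is correct. It also rests on the same underlying idea as the paper's proof — a $45^\circ$ rotation that decouples the two Gaussian coordinates — but the execution differs in a way worth noting. The paper first shifts $z_1=t_1-a$, splits the region into the octant $\{z_1\ge t_2,\ t_2\ge 0\}$ (contributing $1/4$) plus the strip $\{t_2-a\le z_1\le t_2\}$, and only then applies the rotated coordinates $u=(z_1+t_2)/2$, $v=(-z_1+t_2)/2$ to the strip, producing the three terms $\tfrac14+\tfrac12\erf(a/2)+\tfrac14\erf(a/2)^2$ which are recombined into the square. You instead phrase the integral as $2\,\mathbb{P}[Y\ge0,\ Y-X\le a]$ for independent standard normals, rotate the whole probability at once, and finish by conditioning on $W$ together with the antiderivative $\tfrac{d}{dw}\bigl[\Phi(w)^2/2\bigr]=\phi(w)\Phi(w)$; this avoids the region decomposition and produces the perfect square directly, at the price of invoking the standard relation $\Phi(x)=\tfrac12(1+\erf(x/\sqrt2))$. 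Your alternative argument (differentiating both sides in $a$, checking $F'(a)=\tfrac{1}{2\sqrt{\pi}}e^{-a^2/4}(1+\erf(a/2))$ and the common limit $0$ as $a\to-\infty$) is also valid and is genuinely different from the paper's computation; either version suffices, and the identity is confirmed by the sanity checks $a=0$ and $a\to\infty$.
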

\begin{proof}
Using the change of variable $z_1=t_1-a$, the left double integral is
\begin{align}
\label{double_inte2}
\frac{1}{\pi} \int_{0}^{\infty}  \int_{t_2-a}^{\infty} \exp(-(z_1^2+t_2^2)/2) d z_1 dt_2 =& \frac{1}{4}+\frac{1}{\pi} \int_0^{\infty} dt_2 \int_{t_2-a}^{t_2} \exp \left( -(z_1^2+t_2^2)/2  \right) dz_1
\end{align}
Using the transformation $u=(z_1+t_2)/2$ and $v=(-z_1+t_2)/2$ where the determinant of the corresponding Jacobian matrix is $2$ and the integration region is
$$
\{(u,v): 0 \leq u, 0 \leq v \leq a/2\} \cup \{(u,v): -v \leq u \leq 0, 0 \leq v \leq a/2\},
$$
the integral in (\ref{double_inte2}) is
\begin{align}
& \frac{1}{4} + \frac{2}{\pi} \int_0^{\infty} du \int_{0}^{a/2} \exp(-(u^2+v^2))dv + \frac{2}{\pi} \int_0^{a/2} dv \int_{-v}^0 \exp(-(u^2+v^2))du  \nonumber \\
& =  \frac{1}{4} + \frac{1}{\sqrt{\pi}} \int_0^{a/2} \exp(-v^2) dv +   \frac{1}{\pi} \int_{0}^{a/2} dv \int_{0}^{a/2} \exp(-(u^2+v^2)) du \nonumber  \\
& = \frac{1}{4} + \frac{1}{2} \erf(a/2) + \frac{1}{4} (\erf(a/2))^2
\end{align}
and this equals the formula at (\ref{dinte1}).
\end{proof}
We claim that the unique solution $\theta_1^*$ to (\ref{theta_beta14}) is twice of the one to (\ref{transce2}) for the same $x$. By Lemma \ref{basis_cal} and the integral
\begin{align*}
\int_0^{\infty} \exp(-(t_2^2+(t_2-\theta_1-4x)^2)/2) dt_2=\sqrt{\pi}/2\exp(-(\theta_1+4x)^2/4)(1+\erf((\theta_1+4x)/2)),
\end{align*}
(\ref{theta_beta14}) is rewritten by
\begin{align*}
& \theta_1 + \frac{\sqrt{\pi}/2\exp(-(\theta_1+4x)^2/4)(1+\erf((\theta_1+4x)/2))}{\pi (1+\erf((\theta_1+4x)/2))^2/4}=0 \\
& \Rightarrow \theta_1/2+ \sqrt{\frac{1}{\pi}}\frac{e^{-(2x+\theta_1/2)^2}}{1+\erf(2x+\theta_1/2)}=0
\end{align*}
which is (\ref{transce2}) by setting $\theta_1/2$ to $\theta$. Recall that the unique solution to (\ref{transce2}) is $\theta(x)$ for a $x$ satisfying (\ref{eq:xupperrange}). For this special setting, it is easy to see that the $K(\cdot)$ for each vertex subset $V_1,\dots,V_4$ are the same. Hence we have
\begin{align} \label{I_2_beta14}
\frac{1}{n} \log I_2 = 4/n \log K(n/4,nz_{1,2},nz_{1,4},nz_{1,3},2nz_{1,1})=\log P(2\theta(x),0,0,1,4x)
\end{align}
Then by (\ref{I_1_beta14}) and (\ref{I_2_beta14}), we obtain a lower bound for $\sup_{\beta,z_{j,k}}  \lim_{n \rightarrow \infty}\frac{1}{n} \log I_1 I_2$, that is
\begin{align} \label{lb_beta14}
\sup_{\beta,z_{j,k}}  \lim_{n \rightarrow \infty}\frac{1}{n} \log I_1 I_2 \geq 2\log 2-4x^2+\log P(2\theta(x),0,0,1,4x)
\end{align}
For a given $x$ satisfying (\ref{eq:xupperrange}), the right hand side of the last equation is a constant. For a constant $a>0$ such that $\|\eta\|_2 \geq a$, Lemma \ref{lemma:boundI1} yields that $I_3(\beta,\eta) \geq a^2/(4c)$, which imples that
\begin{align*}
\frac{1}{n}\log I_1 \leq -a^2/2 - 2\beta\log \beta-(1-2\beta)\log(1/2-\beta) \leq -a^2/2+2\log 2
\end{align*}
By $\frac{1}{n}\log I_2 \leq 0$, we obtain an upper bound
\begin{align*}
\frac{1}{n} \log I_1 I_2 \leq -a^2/2+2\log 2
\end{align*}
We can increase $a$ such that the upper bound in the last equation is less than the lower bound in (\ref{lb_beta14}) for a given $x$ in (\ref{eq:xupperrange}). Thus when considering the optimization problem of maximizing $I_1 I_2$ over $\beta$ and $\eta$ subject to the constraints (\ref{y1_y4.C1})-(\ref{y1_y4.CSum})
for sufficiently large $c$, we may without the loss of generality consider vectors $\eta$ satisfying $\|\eta\|_2\le a$ for some postive constant $a$. This will be useful
in our later analysis. For vectors $\eta$ satisfying this bound, we obtain an approximation
\begin{align}\label{eq:I3SecondOrder}
I_1(\beta,\eta)=-2\beta\log \beta-(1-2\beta)\log(1/2-\beta)-\sum_{1\le j\le k\le 4}{\eta_{j,k}^2\over \beta_{j,k}}+o_c(1).
\end{align}

We now recall notation (\ref{eq:eta}). Applying (\ref{eq:K1}), Lemma~\ref{lemmaK4d} and Lemma~\ref{compute_K4} and canceling $\log P_2$ with $\log \pr(\mathcal{B}_i)$  we obtain
\begin{align*}
&\lim_n{1\over n}\log K(\beta n, nz_{1,2},nz_{1,4},nz_{1,3},2nz_{1,1}) \\
&=\beta\inf_{\theta_1,\theta_2}
\log P\left(\theta_1,\theta_2,{2(\eta_{1,2}-\eta_{1,3})\over \sqrt{2\beta(\beta_{1,2}+\beta_{1,3})}},{\sqrt{\beta_{1,2}+\beta_{1,3}}\over \sqrt{\beta_{1,4}+2\beta_{1,1}}},
{2(\eta_{1,4}-2\eta_{1,1})\over \sqrt{2\beta(\beta_{1,4}+2\beta_{1,1})}}\right)+o_c(1).
\end{align*}
Similarly
{\small
\begin{align*}
& \lim_n{1\over n}\log K(\beta n, nz_{1,4},nz_{2,4},nz_{3,4},2nz_{4,4}) \\
&=\beta\inf_{\theta_1,\theta_2}
\log P\left(\theta_1,\theta_2,{2(\eta_{3,4}-\eta_{2,4})\over \sqrt{2\beta(\beta_{2,4}+\beta_{3,4})}},{\sqrt{\beta_{2,4}+\beta_{3,4}}\over \sqrt{\beta_{1,4}+2\beta_{4,4}}},
{2(\eta_{1,4}-2\eta_{4,4})\over \sqrt{2 \beta (\beta_{1,4}+2\beta_{4,4}})}\right)+o_c(1), \\
&\lim_n{1\over n}\log K((1/2-\beta) n, nz_{1,2},nz_{2,4},nz_{2,3},2nz_{2,2})=\\
&=(1/2-\beta)\inf_{\theta_1,\theta_2}
\log P\left(\theta_1,\theta_2,{2(\eta_{1,2}-\eta_{2,4})\over \sqrt{2(1/2-\beta)(\beta_{1,2}+\beta_{2,4})}},{\sqrt{\beta_{1,2}+\beta_{2,4}}\over \sqrt{\beta_{2,3}+2\beta_{2,2}}},
{2(\eta_{2,3}-2\eta_{2,2})\over \sqrt{2(1/2-\beta)(\beta_{2,3}+2\beta_{2,2})}}\right)+o_c(1)\\
& \lim_n{1\over n}\log K((1/2-\beta) n, nz_{1,3},nz_{3,4},nz_{2,3},2nz_{3,3})=\\
&=(1/2-\beta)\inf_{\theta_1,\theta_2}
\log P\left(\theta_1,\theta_2,{2(\eta_{1,3}-\eta_{3,4})\over \sqrt{2(1/2-\beta)(\beta_{1,3}+\beta_{3,4})}},{\sqrt{\beta_{1,3}+\beta_{3,4}}\over \sqrt{\beta_{2,3}+2\beta_{3,3}}},
{2(\eta_{2,3}-2\eta_{3,3})\over \sqrt{2(1/2-\beta)(\beta_{2,3}+2\beta_{3,3})}}\right)+o_c(1).
\end{align*}
}
 Combining with (\ref{eq:I3SecondOrder}),  (\ref{I_12}) and (\ref{I_1}), we are thus reduced to solving the optimization problem of maximizing
\begin{align}
&-2\beta \log \beta-(1-2\beta)\log(1/2-\beta)-{\eta_{1,4}^2\over \beta_{1,4}}-{\eta_{2,3}^2\over \beta_{2,3}}
-{\eta_{1,2}^2\over \beta_{1,2}}-{\eta_{1,3}^2\over \beta_{1,3}}-{\eta_{2,4}^2\over \beta_{2,4}}-{\eta_{3,4}^2\over \beta_{3,4}}
-\sum_{j=1}^{4}{\eta_{j,j}^2\over \beta_{j,j}}
\label{eq:QuadraticSum}\\
&+\beta\inf_{\theta_1,\theta_2}
\log P\left(\theta_1,\theta_2,{2(\eta_{1,2}-\eta_{1,3})\over \sqrt{2\beta(\beta_{1,2}+\beta_{1,3})}},{\sqrt{\beta_{1,2}+\beta_{1,3}}\over \sqrt{\beta_{1,4}+2\beta_{1,1}}},
{2(\eta_{1,4}-2\eta_{1,1})\over \sqrt{2\beta(\beta_{1,4}+2\beta_{1,1})}}\right) \nonumber \\
&+\beta\inf_{\theta_1,\theta_2}
\log P\left(\theta_1,\theta_2,{2(\eta_{3,4}-\eta_{2,4})\over \sqrt{2\beta(\beta_{2,4}+\beta_{3,4})}},{\sqrt{\beta_{2,4}+\beta_{3,4}}\over \sqrt{\beta_{1,4}+2\beta_{4,4}}},
{2(\eta_{1,4}-2\eta_{4,4})\over \sqrt{2 \beta (\beta_{1,4}+2\beta_{4,4}})}\right) \notag \\
&+(1/2-\beta)\inf_{\theta_1,\theta_2}
\log P\left(\theta_1,\theta_2,{2(\eta_{1,2}-\eta_{2,4})\over \sqrt{2(1/2-\beta)(\beta_{1,2}+\beta_{2,4})}},{\sqrt{\beta_{1,2}+\beta_{2,4}}\over \sqrt{\beta_{2,3}+2\beta_{2,2}}},
{2(\eta_{2,3}-2\eta_{2,2})\over \sqrt{2(1/2-\beta)(\beta_{2,3}+2\beta_{2,2})}}\right) \notag \\
&+(1/2-\beta)\inf_{\theta_1,\theta_2}
\log P\left(\theta_1,\theta_2,{2(\eta_{1,3}-\eta_{3,4})\over \sqrt{2(1/2-\beta)(\beta_{1,3}+\beta_{3,4})}},{\sqrt{\beta_{1,3}+\beta_{3,4}}\over \sqrt{\beta_{2,3}+2\beta_{3,3}}},
{2(\eta_{2,3}-2\eta_{3,3})\over \sqrt{2(1/2-\beta)(\beta_{2,3}+2\beta_{3,3})}}\right), \notag
\end{align}
subject to (\ref{y1_y4.C1}),(\ref{y1_y4.C2}),(\ref{y1_y4.CSum}).

\begin{lemma}\label{lemma:Optimal eta}
Given $x$ satisfying (\ref{eq:xupperrange}), the value of the optimization problem above equals to the maximum value of the following function in $\beta \in (0, 1/2)$ and $t$:
\begin{align}
\label{minimax}
&-2\beta \log \beta-(1-2\beta)\log(1/2-\beta)-{1\over 2}{t^2\over \beta^2}-{1\over 2}{(x-t)^2\over (1/2-\beta)^2}
+2\beta\inf_{\theta}
\log P\left(\theta,\sqrt{1/2-\beta\over \beta},
{t\over \beta^{3/2}}\right) \nonumber \\
&+2(1/2-\beta)\inf_{\theta}
\log P\left(\theta,\sqrt{\beta\over 1/2-\beta},
{x-t\over (1/2-\beta)^{3/2}}\right).
\end{align}
\end{lemma}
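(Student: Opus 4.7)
My plan is to prove equality by establishing matching upper and lower bounds: for the upper bound I would reduce the seven-variable expression in (\ref{eq:QuadraticSum}) to the two-variable form (\ref{minimax}) through a sequence of four tightness-preserving inequalities, and for the lower bound I would exhibit an explicit symmetric choice of $\eta$ that realizes (\ref{minimax}). The key insight is that several hidden symmetries (between $V_1,V_4$, between $V_2,V_3$, and among the four cross pairs) can be exploited so that the four $\inf_{\theta_1,\theta_2}\log P$ terms pair into two one-dimensional infima, matching the structure of (\ref{minimax}).

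For the upper bound direction, fix any feasible $(\beta,\eta)$ satisfying (\ref{y1_y4.C1})--(\ref{y1_y4.CSum}). First, I would apply Lemma \ref{lemma:theta1b1} to each of the four $\inf_{\theta_1,\theta_2}\log P(\theta_1,\theta_2,b_1,b_2,b_3)$ terms to replace each by $\inf_\theta \log P(\theta,0,0,b_2,b_3)$, obtaining an upper bound at no loss and eliminating the dependence on all four $b_1$-parameters. Second, I would pair the first two $\log P$-terms, which share weight $\beta$ and parameter $b_2=\sqrt{(1/2-\beta)/\beta}$ and have $b_3$-values $(\eta_{1,4}-2\eta_{1,1})/\beta^{3/2}$ and $(\eta_{1,4}-2\eta_{4,4})/\beta^{3/2}$: by the concavity of $\inf_\theta \log P(\theta,b_2,\cdot)$ from Lemma \ref{lemma:theta1b1}, their sum is bounded by $2\beta$ times a single evaluation at the averaged $b_3$, namely at $(\eta_{1,4}-2\bar\eta_{1,1})/\beta^{3/2}$ with $\bar\eta_{1,1}=(\eta_{1,1}+\eta_{4,4})/2$; the same applies to the last pair with $\bar\eta_{2,2}=(\eta_{2,2}+\eta_{3,3})/2$.

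Third, I would combine this with the AM-QM bounds $-\eta_{1,1}^2/\beta_{1,1}-\eta_{4,4}^2/\beta_{4,4}\le -2\bar\eta_{1,1}^2/\beta_{1,1}$ and the analogue for the other pair (using $\beta_{1,1}=\beta_{4,4}$ and $\beta_{2,2}=\beta_{3,3}$), together with a Cauchy-Schwarz bound on the cross terms: since $\beta_{1,2}=\beta_{1,3}=\beta_{2,4}=\beta_{3,4}=\beta(1/2-\beta)$ and (after Step 1) they enter only through (\ref{y1_y4.C1}), (\ref{y1_y4.C2}) which force $\eta_{1,2}+\eta_{3,4}=\eta_{1,3}+\eta_{2,4}=x/2-\eta_{1,4}-\eta_{2,3}$, one obtains $-\sum \eta_{j,k}^2/\beta_{j,k}\le -(x/2-\eta_{1,4}-\eta_{2,3})^2/(\beta(1/2-\beta))$. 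Fourth, I would introduce $t=\eta_{1,4}-2\bar\eta_{1,1}$; then (\ref{y1_y4.CSum}) forces $x-t=\eta_{2,3}-2\bar\eta_{2,2}$. The $\log P$ terms now depend only on $t$ and $x-t$, and a routine calculus optimization of the residual quadratic over $\eta_{1,4},\eta_{2,3}$ with $t$ fixed locates the critical point at $\eta_{1,4}=t/2$, $\eta_{2,3}=(x-t)/2$, at which the cross contribution vanishes and the residual quadratic reduces to $-\tfrac12 t^2/\beta^2-\tfrac12(x-t)^2/(1/2-\beta)^2$, matching (\ref{minimax}).

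For the lower bound I would simply take $\eta_{1,2}=\eta_{1,3}=\eta_{2,4}=\eta_{3,4}=0$, $\eta_{1,4}=t/2$, $\eta_{2,3}=(x-t)/2$, $\eta_{1,1}=\eta_{4,4}=-t/4$, $\eta_{2,2}=\eta_{3,3}=-(x-t)/4$ and verify directly that this satisfies (\ref{y1_y4.C1})--(\ref{y1_y4.CSum}), that all four $b_1$-parameters vanish so the inner infima collapse (with $\theta_2^*=0$) by Lemma \ref{lemma:theta1b1}, that the paired $b_3$-parameters equal $t/\beta^{3/2}$ and $(x-t)/(1/2-\beta)^{3/2}$, and that the quadratic contribution evaluates to $-\tfrac12 t^2/\beta^2-\tfrac12(x-t)^2/(1/2-\beta)^2$. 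The hardest part will be ensuring that all inequalities in the upper bound chain become equalities simultaneously at this symmetric candidate, since Steps 2 (concavity pairing of $b_3$'s) and 3 (AM-QM on the $\eta_{j,j}$'s) both involve $\eta_{1,1},\eta_{4,4}$ and $\eta_{2,2},\eta_{3,3}$, while Step 3 also couples the cross variables through the constraints; careful bookkeeping is needed to verify that the symmetric candidate witnesses tightness throughout.
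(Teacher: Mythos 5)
Your proposal is correct and follows essentially the same route as the paper's proof: the same ingredients from Lemma \ref{lemma:theta1b1} (setting $\theta_2=0$ and $b_1=0$, and concavity of $\inf_{\theta}\log P$ in $b_3$), the same averaging of the cross variables $\eta_{1,2},\eta_{1,3},\eta_{2,4},\eta_{3,4}$ and of the diagonal pairs $\eta_{1,1},\eta_{4,4}$ and $\eta_{2,2},\eta_{3,3}$, the same reduction to $t=\eta_{1,4}-2\bar\eta_{1,1}$, $x-t=\eta_{2,3}-2\bar\eta_{2,2}$ via the constraints, and the same final quadratic optimization at $\eta_{1,4}=t/2$, $\eta_{2,3}=(x-t)/2$. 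Packaging it as an upper-bound chain plus an explicit symmetric witness is just a reorganization of the paper's symmetrization argument (and your closing worry is unnecessary: the lower bound is a direct evaluation at a feasible point, so no tightness bookkeeping in the upper-bound chain is needed).
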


\begin{proof}
Applying Lemma~\ref{lemma:theta1b1} we can set $\theta_2=0$. By the same lemma, the contribution of $\log P$ term is maximized, all else being equal, by
setting $\eta_{1,2}=\eta_{1,3},\eta_{2,4}=\eta_{3,4},\eta_{1,2}=\eta_{2,4},\eta_{1,3}=\eta_{3,4}$ since it makes the third argument of $P$ equal to zero.
At the same time we have $\beta_{i,j}$ take the same value for the corresponding pairs of indices $(1,2),(1,3),(2,4),(3,4)$. Thus replacing
the terms $\eta_{1,2},\eta_{1,3},\eta_{2,4},\eta_{3,4}$ by their average $(\eta_{1,2}+\eta_{1,3}+\eta_{2,4}+\eta_{3,4})/4$ can only increase the quadratic
term in the objective function (\ref{eq:QuadraticSum}). We now analyze how this replacement affects the constraints. From the constraints
(\ref{y1_y4.C1}) and (\ref{y1_y4.C2}) we must have $\eta_{1,2}+\eta_{3,4}=\eta_{1,3}+\eta_{2,4}=x/2-\eta_{1,4}-\eta_{2,3}$.
Thus setting  $\eta_{1,2}=\eta_{1,3}=\eta_{2,4}=\eta_{3,4}$ equal to $(x/2-\eta_{1,4}-\eta_{2,3})/2$ satisfies all of the constraints (\ref{y1_y4.C1})-(\ref{y1_y4.C2}).
We conclude that this substitution
does not decrease the objective function (\ref{eq:QuadraticSum}) and automatically satisfies the constraints (\ref{y1_y4.C1}),(\ref{y1_y4.C2}). In particular,
the constraint (\ref{y1_y4.CSum}) is the only one we should mind.

Next, from Lemma~\ref{lemma:theta1b1} we also have concavity of $\log P$ function in its last argument. Thus replacing $\eta_{1,1}$ and $\eta_{4,4}$ by their average
increases the contribution of the first two $\log P$ terms. At the same time this can only increase the value of the quadratic term in (\ref{eq:QuadraticSum})
since again $\beta_{1,1}=\beta_{4,4}$. A similar observation implies $\eta_{2,2}=\eta_{3,3}$. The constraint (\ref{y1_y4.CSum}) is not affected by this substitution
since $\eta_{j,j}$ appear there only through their sum.

We conclude that the optimization problem is equivalent to maximizing
\begin{align}
&-2\beta \log \beta-(1-2\beta)\log(1/2-\beta)-{\eta_{1,4}^2\over \beta^2}-{\eta_{2,3}^2\over (1/2-\beta)^2}
-{(x/2-\eta_{1,4}-\eta_{2,3})^2\over \beta(1/2-\beta)}
\label{eq:QuadraticSum2}\\
&-4{\eta_{1,1}^2\over \beta^2}-4{\eta_{2,2}^2\over (1/2-\beta)^2}+2\beta\inf_{\theta}
\log P\left(\theta,0,0,\sqrt{1/2-\beta\over \beta},
{\eta_{1,4}-2\eta_{1,1}\over \beta^{3/2}}\right) \label{eq:LD1}\\
&+2(1/2-\beta)\inf_{\theta}
\log P\left(\theta,0,0,\sqrt{\beta\over 1/2-\beta},
{\eta_{2,3}-2\eta_{2,2}\over (1/2-\beta)^{3/2}}\right), \label{eq:LD2}
\end{align}
subject to the only constraint
\begin{align*}
\eta_{1,4}+\eta_{2,3}+2\eta_{1,1}+2\eta_{2,2}+4(x/2-\eta_{1,4}-\eta_{2,3})/2=0,
\end{align*}
which we rewrite as
\begin{align}\label{eq:NewConstraint}
\eta_{1,4}+\eta_{2,3}-2\eta_{1,1}-2\eta_{2,2}=x.
\end{align}
Now we let $t_1=\eta_{1,4}-2\eta_{1,1}$ and $t_2=\eta_{2,3}-2\eta_{2,2}$, allowing us to rewrite the constraint above as
\begin{align}\label{eq:NewConstraint2}
t_1+t_2=x.
\end{align}
Notice that the large deviations terms (\ref{eq:LD1})  and (\ref{eq:LD2}) in the objective function depend on $\eta$ only through $t_1$ and $t_2$. We now
consider unconstrained optimizing the quadratic term (\ref{eq:QuadraticSum2}) in terms of $\eta_{1,4}$ and $\eta_{2,3}$ for a fixed value $t_1$ and $t_2$. The quadratic term is
\begin{align*}
&-{\eta_{1,4}^2\over \beta^2}-{\eta_{2,3}^2\over (1/2-\beta)^2}
-{(x/2-\eta_{1,4}-\eta_{2,3})^2\over \beta(1/2-\beta)}-{(\eta_{1,4}-t_1)^2\over \beta^2}-{(\eta_{2,3}-t_2)^2\over (1/2-\beta)^2}.
\end{align*}
We observe that setting $\eta_{1,4}=t_1/2$ minimizes ${\eta_{1,4}^2\over \beta^2}+{(\eta_{1,4}-t_1)^2\over \beta^2}$. Similar observation applies to setting
$\eta_{2,3}=t_2/2$. At the same time, this setting implies $\eta_{1,4}+\eta_{2,3}=(t_1+t_2)/2=x/2$ and thus nullifies the middle term. We conclude that for a given
$t_1,t_2$ satisfying $t_1+t_2=x$, the optimal value is
\begin{align*}
&-{1\over 2}{t_1^2\over \beta^2}-{1\over 2}{t_2^2\over (1/2-\beta)^2}\\
&+2\beta\inf_{\theta}
\log P\left(\theta,0,0,\sqrt{1/2-\beta\over \beta},
{t_1\over \beta^{3/2}}\right) \\
&+2(1/2-\beta)\inf_{\theta}
\log P\left(\theta,0,0,\sqrt{\beta\over 1/2-\beta},
{t_2\over (1/2-\beta)^{3/2}}\right),
\end{align*}
Setting $t_1=t,t_2=x-t$ completes the proof.
\end{proof}

\ignore{
In light of Lemma~\ref{lemma:Optimal eta} we can further rewrite the optimization problem as
\begin{align*}
&\sup_t W(\beta,x,t),
\end{align*}
where
\begin{align*}
W(\beta,x,y)&\triangleq
-{1\over 4}{y^2\over \beta^2}-{1\over 4}{(2x-y)^2\over (1/2-\beta)^2}
+2\beta\inf_\theta\log P\left(\theta,0,0,\sqrt{1/2-\beta\over \beta},{y\over \sqrt{3/2}\beta}\right) \\
&+2(1/2-\beta)\inf_\theta\log P\left(\theta,0,0,\sqrt{\beta\over 1/2-\beta},{2x-y\over \sqrt{3/2}(1/2-\beta)}\right),
\end{align*}
and we recall
\begin{align*}
P(\theta,0,0,b_2,b_3)&=
{1\over 2\pi}
\int_{t_1+b_3 \geq b_2 \left\lvert t_2 \right\rvert}\exp\left(\theta t_1-{t_1^2+t_2^2\over 2}\right)dt_1dt_2 \\
&={1\over \pi}
\int_{t_1\geq b_2 t_2, t_2\ge 0 }\exp\left(\theta (t_1-b_3)-{(t_1-b_3)^2+t_2^2\over 2}\right)dt_1dt_2.
\end{align*}
}

\section{Solving the optimization problem (\ref{minimax})}\label{section:SystemEquation}
Given $x$ satisfying (\ref{eq:xupperrange}) and $\beta \in (0,1/2)$, we recognize the optimization problem in (\ref{minimax}) is a Minimax problem. In this section, we will rely on Sion's Minimax Theorem~\cite[Corollary 3.3]{sion1958general} to solve it. We first use the degree local optimality constraint to claim that we only need to consider a bounded set of $t$. Recall from (\ref{z_ij_cons})
\begin{align*}
z_{1,4}-2z_{1,1} \geq |z_{1,2}-z_{1,3}|, \quad z_{2,3}-2z_{2,2} \geq |z_{1,2}-z_{2,4}|,
\end{align*}
which by (\ref{eq:eta}) gives that
\begin{align}\label{dec_1}
\eta_{1,4}-2\eta_{1,1} \geq |\eta_{1,3}-\eta_{1,2}|, \quad \eta_{2,3}-2\eta_{2,2} \geq |\eta_{1,2}-\eta_{2,4}|.
\end{align}
Recall that $t=\eta_{1,4}-2\eta_{1,1}$ and $x-t=\eta_{2,3}-2\eta_{2,2}$ and from (\ref{dec_1}), we have that $t \in [0,x]$. For a given $x$ and $\beta$, we rewrite the minimax problem in (\ref{minimax}) as
\begin{align*}
\sup_{t\in[0,x]} \inf_{\theta_1, \theta_2} F(t,\theta_1,\theta_2)
\end{align*}
where
\begin{align}\label{F_t_theta}
F(t,\theta_1,\theta_2)= &-\frac{t^2}{2\beta^2}-\frac{(x-t)^2}{2(1/2-\beta)^2}+2\beta \log P\left( \theta_1,\sqrt{\frac{1/2-\beta}{\beta}},\frac{t}{\beta^{3/2}}\right) \nonumber \\
                        & +2(1/2-\beta)\log P\left( \theta_2,\sqrt{\frac{\beta}{1/2-\beta}},\frac{x-t}{(1/2-\beta)^{3/2}} \right)
\end{align}
By the convexity of $\log M(\theta)$ and the concavity of $\log P(\theta_1,b_2,b_3)$ in $b_3$ which was established in Lemma \ref{lemma:theta1b1}, we have that $F(t,\cdot,\cdot)$ is convex on a set $(\theta_1,\theta_2)\in \mathbb{R}^2$, and $F(\cdot,\theta_1,\theta_2)$ is concave on $[0,x]$. Sion's Minimax Theorem then gives
\begin{align}
\label{saddle_point}
\sup_{ t \in [0,x] } \inf_{\theta_1,\theta_2} F(t,\theta_1,\theta_2)= \inf_{\theta_1,\theta_2 } \sup_{ t \in [0,x] } F(t,\theta_1,\theta_2).
\end{align}
Given $x$ and $\beta$, let the saddle point set be $t^{*}(x,\beta) \times (\theta_1^{*}(x,\beta), \theta_2^{*}(x,\beta)) \subset [0,x] \times \mathbb{R}^2$, where
\begin{align}
t^*=t^{*}(x,\beta)=&\text{argmax}_{t \in [0,x]} \inf_{\theta_1,\theta_2} F(t,\theta_1,\theta_2),   \nonumber \\
\theta^*=(\theta_1^{*}(x,\beta), \theta_2^{*}(x,\beta))=&\text{argmin}_{(\theta_1,\theta_2) }  \sup_{t \in [0,x]} F(t,\theta_1,\theta_2). \nonumber
\end{align}
\begin{lemma}\label{lemma:Uniquesaddlepoint}
Given any $x$ satisfying (\ref{eq:xupperrange}) and any $\beta \in (0,1/2)$, $(t^*,\theta^*)$ is unique and is given as the unique solution to
\begin{align}
\label{Fpartialthetay}
\frac{\partial F(t,\theta_1,\theta_2)}{\partial t}=0, \; \frac{\partial F(t,\theta_1,\theta_2)}{\partial \theta_1}=0, \; \frac{\partial F(t,\theta_1,\theta_2)}{\partial \theta_2}=0.
\end{align}
\end{lemma}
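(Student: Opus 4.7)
My plan is to combine the minimax equality (\ref{saddle_point}) with the strict concave–convex structure of $F$. Existence of a saddle point follows once I verify attainment of the inner infimum: for each fixed $t$, the map $(\theta_1,\theta_2)\mapsto F(t,\theta_1,\theta_2)$ is continuous and coercive on $\R^2$, because the representation (\ref{P_002}) shows that $\log P(\theta_i,b_2,b_3)$ grows at least like $\theta_i^2/2$ at infinity while the residual Gaussian integral is bounded below on compact sets, forcing $F\to+\infty$. The outer maximum over the compact interval $[0,x]$ is automatic by continuity. It then remains to prove uniqueness of the saddle point and to check that it lies in the interior of $[0,x]\times\R^2$; once this is in hand, vanishing of all three partial derivatives at the saddle point yields (\ref{Fpartialthetay}), and the strict concave–convex structure forces uniqueness of the solution.

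For uniqueness of the inner minimizer, each of the two $\log P$ summands in (\ref{F_t_theta}) depends on only one of $\theta_1,\theta_2$ and is strictly convex in that variable, inheriting strict convexity from the log-MGF $\log M(\theta)$ exploited in Lemma~\ref{compute_K4}. Hence $F(t,\cdot,\cdot)$ is strictly convex on $\R^2$, and its minimizer is unique. For uniqueness of the outer maximizer, I will show that $F$ is strictly concave in $t$. The quadratic part $-t^2/(2\beta^2)-(x-t)^2/(2(1/2-\beta)^2)$ is strictly concave, and $t$ enters each $\log P$ term only through the linear argument $t/\beta^{3/2}$, respectively $(x-t)/(1/2-\beta)^{3/2}$; Lemma~\ref{lemma:theta1b1} shows that $\log P(\theta,b_2,\cdot)$ is concave in its third argument, so composition with a linear map preserves concavity. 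Since the strictly concave quadratic part is independent of $(\theta_1,\theta_2)$, it passes through the inner infimum, and $g(t)\triangleq\inf_{\theta_1,\theta_2}F(t,\theta_1,\theta_2)$ inherits strict concavity and thus admits a unique maximizer $t^*\in[0,x]$.

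The main obstacle I foresee is confirming that $t^*$ lies in the open interval $(0,x)$, so that the saddle-point first-order condition in $t$ is a true equation rather than a boundary inequality. I plan to verify this by computing the one-sided derivatives of $g$ at the endpoints via the envelope theorem, $g'(t)=\partial_t F(t,\theta_1^*(t),\theta_2^*(t))$, where $\theta_i^*(t)$ are the unique inner minimizers just established. At $t=0$ the quadratic contribution equals $+x/(1/2-\beta)^2$, while the $\log P$ corrections $\pm 2\sqrt{\beta}\,\partial_{b_3}\log P_1$ and $\mp 2\sqrt{1/2-\beta}\,\partial_{b_3}\log P_2$ evaluated at the optimal $\theta_i^*(0)$ are bounded and do not dominate the quadratic term for $x$ in the range (\ref{eq:xupperrange}), giving $g'(0)>0$; a symmetric computation yields $g'(x)<0$. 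Hence $t^*\in(0,x)$, the pair $(t^*,\theta^*)$ satisfies all three equations in (\ref{Fpartialthetay}), and strict concave–convexity of $F$ ensures that this system has the unique solution $(t^*,\theta_1^*,\theta_2^*)$, completing the proof.
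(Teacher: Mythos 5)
Your overall architecture --- Sion's minimax theorem, strict convexity in $(\theta_1,\theta_2)$ inherited from the log-MGF, strict concavity in $t$ coming from the quadratic term plus concavity of $\log P$ in its last argument, and then reading off (\ref{Fpartialthetay}) at a unique saddle point --- is essentially the paper's route. The genuine gap is in the existence/interiority step, which you yourself flag as the main obstacle. Your coercivity claim is false: in the representation (\ref{P_002}) the prefactor $e^{\theta_1^2/2}$ is cancelled as $\theta_1\to-\infty$ by the Gaussian integral, which then sits in the far tail of the recentred Gaussian; it is not ``bounded below at infinity''. Equivalently, writing $P(\theta_1,b_2,b_3)=\E\bigl[e^{\theta_1 Z_1}\mathbb{I}(Z_1+b_3\ge b_2|Z_2|)\bigr]$ and noting that $Z_1\ge -b_3$ on the event, one gets $\log P(\theta_1,b_2,b_3)\le -b_3\theta_1$ for $\theta_1<0$: only \emph{linear} growth when $b_3>0$, and by dominated convergence $\log P(\theta_1,b_2,0)\to-\infty$ as $\theta_1\to-\infty$. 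Consequently $F(t,\cdot,\cdot)$ is coercive only for $t\in(0,x)$ (and then only linearly in the direction $\theta_i\to-\infty$), while at the endpoints the inner infimum is not attained at all: $g(0)=g(x)=-\infty$, the minimizers $\theta_i^*(0),\theta_i^*(x)$ you invoke do not exist, and the ``bounded $\log P$ corrections'' in your envelope-theorem computation of $g'(0)$ and $g'(x)$ are unfounded --- that endpoint computation cannot be carried out as written.

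The degeneration that breaks your argument is exactly what rescues the conclusion, and it is the mechanism the paper uses: sending the tilt to $-\infty$ when the third argument of $P$ is nonpositive shows that the inner infimum equals $-\infty$ at (and beyond) the endpoints of $[0,x]$, while it is finite and attained for every $t\in(0,x)$; since $g$ is concave, its maximizer $t^*$ is then automatically interior, with no derivative computation at the boundary needed or possible. With interiority obtained this way, the rest of your plan goes through and matches the paper: at $t^*$ the unique inner minimizer satisfies the two $\theta$-equations, an envelope argument at the interior maximizer gives $\partial_t F(t^*,\theta^*)=0$, and $(t^*,\theta^*)$ is a saddle point. One more point should be made explicit: uniqueness of the \emph{solution of the system} (\ref{Fpartialthetay}) does not follow from uniqueness of the saddle point alone; you need to observe that, by concavity in $t$ and convexity in $(\theta_1,\theta_2)$, any stationary point is a saddle point, and then invoke strictness (as in the paper's contradiction argument) to conclude there is only one such point.
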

\begin{proof}
Let $G(t)=\inf_{\theta_1,\theta_2} F(t,\theta_1,\theta_2)$. By Lemma \ref{lemma:theta1b1}
$G(t)$ is strictly concave in $t$. For any $\epsilon>0$, we claim that
\begin{align} \label{G_claim}
G (-\epsilon )=G (x+\epsilon )=-\infty.
\end{align}
Recall $P(\theta_1,b_2,b_3)=P(\theta_1,0,0,b_2,b_3)$ and from (\ref{eq:Pfunction}), we have
\begin{align}\label{P_eps}
\log P \left(\theta_1,\sqrt{\frac{1/2-\beta}{\beta}},-\frac{\epsilon}{\beta^{3/2}} \right)&=\log\int_{t_1 \geq \sqrt{\frac{1/2-\beta}{\beta}}|t_2|+\frac{\epsilon}{\beta^{3/2}}} \exp(\theta_1 t_1)d\mu(t_1,t_2)
\end{align}
where $\mu(\cdot)$ is the probability measure induced by two i.i.d. standard normal random variables. Let the domain of the integration above be
$$
\mathcal{D}=\left\{  (t_1,t_2): t_1 \geq \sqrt{\frac{1/2-\beta}{\beta}}|t_2|+\frac{\epsilon}{\beta^{3/2}} \right\}
$$
For $\theta_1<0$, we have (\ref{P_eps})
\begin{align*}
\leq \log\left( \exp \left(\theta_1 \frac{\epsilon}{\beta^{3/2}} \right) \mu(\mathcal{D})\right)=\theta_1 \frac{\epsilon}{\beta^{3/2}} + \log \mu(\mathcal{D})
\end{align*}
As $\theta_1 \rightarrow -\infty$, we have the right hand side of the equation above goes to $-\infty$ and hence
$$
\inf_{\theta_1}\log P \left(\theta_1,\sqrt{\frac{1/2-\beta}{\beta}},-\frac{\epsilon}{\beta^{3/2}} \right)=-\infty,
$$
which from (\ref{F_t_theta}) implies $G(-\epsilon)=-\infty$. For $t=x+\epsilon$, applying the same argument to another part in (\ref{F_t_theta}) yields $G(x+\epsilon)=-\infty$. This establishes the claim (\ref{G_claim}). Thus $\sup_{t\in [0,x]} G(t)$ is achieved by a unique $t=t^*\in [0,x]$. Lemma \ref{lemma:theta1b1} yields that $(\theta_1^*,\theta_2^*)$ which obtains the infimum of $F(t^*,\theta_1,\theta_2)$ is the unique solution to the last two equations in (\ref{Fpartialthetay}) for $t=t^*$. Fix $(\theta_1,\theta_2)=(\theta_1^*, \theta_2^*)$.
The strict concavity of $F(t,\theta_1^*,\theta_2^*)$ in $t$ and the maximality of $t^*$ indicates that $t^*$ is the unique solution to the first equation in (\ref{Fpartialthetay}). Hence, we have $(t^*,\theta_1^*, \theta_2^*)$ is a solution to (\ref{Fpartialthetay}) as claimed.

The concavity of $F(t,\theta_1,\theta_2)$ in $t$, $\forall (\theta_1,\theta_2)\in \mathbb{R}^2$, yields
\begin{align}
\label{sad_con1}
& F(t,\theta_1^*,\theta_2^*) \leq F(t^*,\theta_1^*,\theta_2^*)+\frac{\partial F(t,\theta_1^*,\theta_2^*)}{\partial t}\bigg|_{t=t^*}(t-t^*) \nonumber \\
& \Rightarrow F(t,\theta_1^*,\theta_2^*) \leq F(t^*,\theta_1^*,\theta_2^*) , \quad \forall t\in [0,x]
\end{align}
Similarly, the convexity of $F(t,\theta_1,\theta_2)$ in $(\theta_1,\theta_2)$, $\forall t\in[0,x]$, yields
\begin{align}
\label{sad_con2}
F(t^*,\theta_1,\theta_2) \geq F(t^*,\theta_1^*,\theta_2^*), \quad \forall (\theta_1,\theta_2)\in \mathbb{R}^2
\end{align}
(\ref{sad_con1}) and (\ref{sad_con2}) hence implies that $(t^{*}, \theta_1^*, \theta_2^*)$ is the saddle point of $F(t,\theta_1,\theta_2)$. Next, we show this saddle point is unique. Suppose there is another saddle point $(\hat{t}, \hat{\theta}_1, \hat{\theta}_2) \neq (t^{*}, \theta_1^*, \theta_2^*)$. If $(\hat{\theta}_1, \hat{\theta}_2) \neq (\theta_2^*, \bar{\theta}_2^*)$, the strict convexity of $F(t,\theta_1,\theta_2)$ in $(\theta_1,\theta_2)$, $\forall y\in [0,x]$, gives
\begin{align}
\label{sad_con3}
F(t^*,\theta_1^*,\theta_2^*) < F(t^*,\hat{\theta}_1,\hat{\theta}_2)
\end{align}
while the saddle point property of $(\hat{t}, \hat{\theta}_1, \hat{\theta}_2)$ implies
\begin{align}
\label{sad_con4}
F(t^*,\hat{\theta}_1,\hat{\theta}_2) \leq F(\hat{t},\hat{\theta}_1,\hat{\theta}_2)
\end{align}
Then from (\ref{sad_con3}) and (\ref{sad_con4}), we have $F(t^*,\theta_1^*,\theta_2^*) < F(\hat{t},\hat{\theta}_2,\hat{\theta}_2)$ which is a contradiction. Likewise if $\hat{t}\neq t^*$, we can use the strict concavity of $F(t,\theta_2,\bar{\theta}_2)$ in $t$, $\forall (\theta_1,\theta_2) \in \mathbb{R}^2$, and the saddle point property of $(\hat{t}, \hat{\theta}_1, \hat{\theta}_2)$ to construct a contradition. Hence the uniqueness of $(t^*,\theta_1^*,\theta_2^*)$ as a saddle point follows, which also implies
that the solution to (\ref{Fpartialthetay}) is unique.
\end{proof}
Next, we derive the explicit expressions for the partial derivatives in (\ref{Fpartialthetay}), which are (\ref{partF_the1}), (\ref{partF_the2}) and (\ref{partF_t}), respectively. Hence, Lemma \ref{lemma:UniqueSolution2} follows from Lemma \ref{lemma:Uniquesaddlepoint}. From (\ref{unique_theta1}) in Lemma \ref{lemma:theta1b1}, we have
\begin{align*}
&\frac{\partial F(t,\theta_1,\theta_2)}{\partial \theta_1}=0 \Rightarrow \text{(\ref{partF_the1})}, \quad \frac{\partial F(t,\theta_1,\theta_2)}{\partial \theta_2}=0 \Rightarrow  \text{(\ref{partF_the2})}.
\end{align*}
Next, we have
\begin{align*}
\frac{\partial \log P(\theta,b_2,b_3)}{\partial t}=& \frac{\partial \log P(\theta,b_2,b_3)}{\partial b_3} \frac{\partial b_3}{\partial t},
\end{align*}
Since $t$ appears only in $b_3$. From (\ref{P_002}),
\begin{align}
 &\frac{\partial (\log (\exp(\theta_1^2/2)/\pi)+\log Q(\theta,b_2,b_3))}{\partial b_3} \frac{\partial b_3}{\partial t} \nonumber \\
\label{partialQ_theta}
&=  \frac{\partial \log Q(\theta,b_2,b_3)}{\partial b_3} \frac{\partial b_3}{\partial t}
\end{align}
By the expression of $Q(\theta,b_2,b_3)$ in (\ref{Q_exp}), it is easy to see that
$$
\frac{\partial \log Q(\theta,b_2,b_3)}{\partial b_3}=\frac{\partial \log Q(\theta,b_2,b_3)}{\partial \theta}.
$$
From (\ref{unique_theta1}), we have
$$
\frac{\partial \log Q(\theta,b_2,b_3)}{\partial \theta}=\frac{1}{Q(\theta,b_2,b_3)} \frac{\partial Q(\theta,b_2,b_3)}{\partial \theta}=-\theta.
$$
Then (\ref{partialQ_theta}) becomes $-\theta \frac{\partial b_3}{\partial t}$. Hence, we have
\begin{align*}
\frac{\partial F(t,\theta_1,\theta_2)}{\partial t}=0 \Rightarrow \text{(\ref{partF_t})}.
\end{align*}
Next we rely on a numerical approach to finding $x_l$ defined in (\ref{lower_bound_x}). First we claim that $W(x,1/4)=2w(x)$ for any $x$ satisfying (\ref{eq:xupperrange}). For $\beta=1/4$, it is easy to see from (\ref{partF_the1}) and (\ref{partF_the2}) that $\theta_1=\theta_2$ and then $t=x/2$ follows from (\ref{partF_t}). Hence, it is the same computation scenario as the special setting in the beginning of subsection \ref{subsection:optimizingI1I2}, then
\begin{align*}
W(x,1/4)=&2\log 2 -4x^2 + \log P(2\theta(x),1,4x)
\end{align*}
where $\theta(x)$ is the unique solution to (\ref{transce2}) for a given $x$ satisfying (\ref{eq:xupperrange}). Recall a form of $P(2\theta(x),1,4x)$ given in (\ref{P_002}) and by Lemma \ref{basis_cal}, the last equation becomes
\begin{align*}
W(x,1/4)=& 2\log2-4x^2 + 2\theta^2(x) + \log \left( \frac{1}{\pi} Q(2\theta(x),1,4x) \right) \\
        =& -4x^2 + 2\theta^2(x) + 2\log (1+\erf(\theta(x)+2x)) \\
				=& 2w(x),
\end{align*}
as claimed. From the expression of $W(x,\beta)$ in (\ref{eq:Wxbeta}), it is easy to see that $W(x,\beta)$ is symmetric about $\beta=1/4$. For the maximum of $W(x,\beta)$ over $\beta\in(0,1/2)$, we only need to consider the region $(0,1/4]$.

Let
\begin{align*}
L(x,\beta,t,\theta_1,\theta_2)=&-2\beta \log \beta-2(1/2-\beta)\log(1/2-\beta)-\frac{t^2}{2\beta^2}-\frac{(x-t)^2}{2(1/2-\beta)^2} \\
                        & +2\beta \log P\left( \theta_1,\sqrt{\frac{1/2-\beta}{\beta}},\frac{t}{\beta^{3/2}}\right)+2(1/2-\beta)\log P\left( \theta_2,\sqrt{\frac{\beta}{1/2-\beta}},\frac{x-t}{(1/2-\beta)^{3/2}} \right).
\end{align*}
Finally, we numerically compute $x_l$ in (\ref{lower_bound_x}) based on the bisection method, in which for a given $x$ satisfying (\ref{eq:xupperrange}) we use the command `FindRoot' in Mathematica to search for a solution to the equation system (\ref{partF_the1}), (\ref{partF_the2}), (\ref{partF_t}) and $L(x,\beta,t,\theta_1,\theta_2)=2w(x)$ inside the region $\beta \in [10^{-10}, 1/4-10^{-10}]$. If the search succeeds, we set $x$ as an upper bound of $x_l$, otherwise we set $x$ as a lower bound of $x_l$. The numerical search procedure using the above choice of parameters converges to $x = 0.47523..$~. Assuming the validity of the numerical search, the result $x_l = 0.47523..$ follows. We plot below the functions $W(x,\beta)$ for $x=0.47523$ and $x=0.5$.

\ignore{
\\
\begin{algorithmic}[]
\STATE Set $tol$ and $\epsilon$ to be two small positive number, and $a$ and $b$ to be the known lower and upper bound of $x$.
\WHILE {$(b-a)>tol$}
        \STATE  $x=a+(b-a)/2$;
				\STATE  Solve (\ref{transce2}) and get a unique solution $\theta(x)$;
				\STATE Search for a solution to the equation system (\ref{partF_the1}), (\ref{partF_the2}), (\ref{partF_t}) and $L(x,\beta,t,\theta_1,\theta_2)=2w(x)$ inside the region $\beta \in [\epsilon, 1/4-\epsilon]$. If the search succeeds, let $ind=1$, otherwise let $ind=0$;
				\IF {$ind=1$}
				     \STATE  $b=x$;
				\ELSE
						 \STATE  $a=x$;
				\ENDIF
\ENDWHILE
\STATE Output $x_l=a$;
\end{algorithmic}
\bigskip
}

\begin{figure}[!htb]
  \centering
  \includegraphics[width=4in]{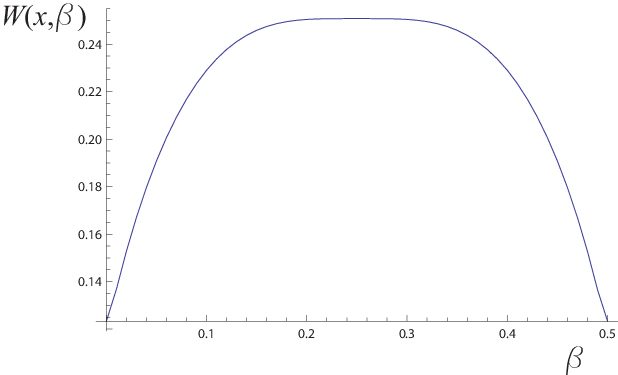}
  \caption[]
   {$W(x,\beta)$ for $x=0.47523$}
	\label{SMMM}
\end{figure}
\begin{figure}[!htb]
  \centering
  \includegraphics[width=4in]{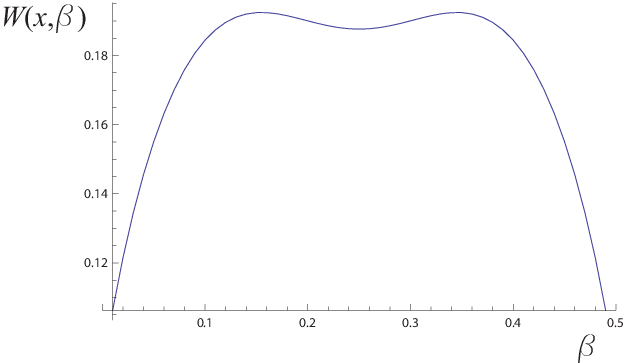}
  \caption[]
   {$W(x,\beta)$ for $x=0.5$}
	\label{SMMM2}
\end{figure}

\section{Proof of Theorem \ref{cgraph_cut} } \label{section:proof_MaxCut_cubic}
We note that the proof below does not rely on any of the ideas developed in the earlier section and relies on a completely different approach. Specifically,
to construct a cut on random cubic graph or cubic graph with large girth, we make use of the following theorem on induced bi-partite subgraphs with a lot of vertices as a starting point.
\begin{theorem}
{\upshape \cite[Theorem 2]{csoka2014invariant}}
Every cubic regular graph with sufficiently large girth has an induced subgraph that is bi-partite and that contains at least a $0.86$ fraction of the vertices.
\end{theorem}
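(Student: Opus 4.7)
The plan is to prove the statement via the local-global principle for cubic graphs with large girth, reducing it to a construction of an invariant (factor of i.i.d.) random labeling of the infinite $3$-regular tree $T_3$. Because a cubic graph of girth $g$ looks like $T_3$ out to radius $g/2$ at every vertex, any invariant random $3$-coloring $c\colon V(T_3)\to\{0,1,2\}$ (with $0$ meaning \emph{unselected} and $1,2$ the two parts of the eventual bipartition) for which, almost surely, no edge receives the same nonzero color on both endpoints can be emulated on a finite cubic graph $G$ via a local algorithm driven by i.i.d.\ labels, producing an induced bipartite subgraph whose expected density matches $\mathbb{P}[c(o)\neq 0]$ at the root $o$, up to a correction vanishing with the girth.

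First, I would formalize this reduction. For a sequence of cubic graphs $G_n$ with girth $g(G_n)\to\infty$, the Benjamini--Schramm limit is the deterministic tree $T_3$; a standard truncation of the factor-of-i.i.d.\ rule to radius $r$ with $r\ll g(G_n)/2$ allows the tree construction to be run on $G_n$ and, combined with Markov's inequality applied to the number of vertices involved in monochromatic edges in colors $1$ or $2$ (which can be pruned), yields an induced bipartite subgraph of size at least $(\mathbb{P}[c(o)\neq 0]-o(1))n$.

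Second, I would construct the coloring on $T_3$ using an invariant Gaussian factor-of-i.i.d.\ process $X\colon V(T_3)\to\mathbb{R}$ whose two-point covariance $\mathrm{Cov}(X(u),X(v))$ depends only on $\mathrm{dist}(u,v)$ and is negative for adjacent vertices, so that neighbors are anti-correlated. Such processes exist on $T_3$ (built from spherical-function representations, or as linear combinations of i.i.d.\ labels summed along tree-balls), with a tunable edge-correlation $\rho$ achievable down to $-1/\sqrt{d-1}=-1/\sqrt{2}$. Given $X$, define $c(v)=+1$ if $X(v)>\tau$, $c(v)=-1$ if $X(v)<-\tau$, and $c(v)=0$ otherwise, with threshold $\tau\geq 0$ chosen below. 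Monochromatic edges in color $+1$ or $-1$ can be deleted by removing one endpoint from the selected set, and the resulting density is at least $\mathbb{P}[|X(o)|>\tau]-\tfrac{3}{2}\bigl(\mathbb{P}[X(u),X(v)>\tau]+\mathbb{P}[X(u),X(v)<-\tau]\bigr)$ for adjacent $u,v$, a quantity expressible through the bivariate normal tail in $(\rho,\tau)$.

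Third, I would optimize $(\rho,\tau)$ jointly to push the lower bound on the density past $0.86$. The main obstacle is exactly this optimization step: the bivariate normal orthant probabilities have no elementary closed form, so the verification will combine monotonicity of these probabilities in $\rho$ (Slepian's inequality) with explicit bounds at the extremal correlation $\rho=-1/\sqrt{2}$. If the one-shot Gaussian construction does not suffice, I would fall back to a two-step refinement: first select an invariant independent set $I$ on $T_3$ of density close to the Csóka--Gerencsér--Harangi--Virág bound ($\approx 0.4352$ for $d=3$), and then, independently and invariantly, choose a second independent set $J$ in the induced forest $T_3[V\setminus I]$ — note that $T_3[V\setminus I]$ has maximum degree $3$ and, conditioned on $I$, is again amenable to a factor-of-i.i.d.\ independent-set procedure — so that $I\cup J$ jointly induces a bipartite subgraph, and then tune the densities to clear $0.86$.
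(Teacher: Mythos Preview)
The paper does not prove this theorem at all: it is quoted verbatim from \cite{csoka2014invariant} and used as a black box in the very short proof of Theorem~\ref{cgraph_cut}. There is therefore no ``paper's own proof'' to compare your proposal against.

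That said, your sketch is in the spirit of the original argument of Cs\'{o}ka, Gerencs\'{e}r, Harangi and Vir\'{a}g. Their construction also proceeds via a factor of i.i.d.\ on the $3$-regular tree and a local-global transfer to large-girth graphs, but the crucial ingredient is more specific than ``some negatively correlated invariant Gaussian process'': they use a \emph{Gaussian wave function}, i.e.\ a Gaussian factor of i.i.d.\ that is (almost surely) an eigenvector of the adjacency operator of $T_3$ with eigenvalue near the spectral edge $-2\sqrt{2}$. The eigenvector property gives much stronger long-range structure than a generic process with edge-correlation $-1/\sqrt{2}$, and this is what makes the $0.86$ bound go through after thresholding and a careful clean-up step.

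Your first fallback --- thresholding a generic Gaussian with edge-correlation $-1/\sqrt{2}$, deleting one endpoint of each bad edge, and optimizing $(\rho,\tau)$ --- will not reach $0.86$; the bivariate-normal computation you describe tops out well below that. Your second fallback (take a large i.i.d.-factor independent set $I$, then an independent set $J$ in the complement, and use $I\cup J$) has a genuine gap: $T_3\setminus I$ need not be a forest (vertices outside $I$ can still have three neighbors outside $I$), and even when the induced graph on $V\setminus I$ has bounded degree you would need a quantitative bound on the density of an i.i.d.-factor independent set \emph{in that random environment}, conditioned on $I$, which is not a direct corollary of the known bounds on $T_3$. So neither fallback, as stated, closes the argument; the eigenvector (wave-function) construction from \cite{csoka2014invariant} is the missing idea.
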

It implies that besides a bipartite subgraph, there are at most $0.14n$ vertices outside the bi-partite subgraph. As a result, we have three separate vertex subsets, two in the bipartite subgraph and one consisting of the vertices outside of the bipartite subgraph. Firstly, we color the two separate vertex subsets in the bipartite subgraph with $0$ and $1$, respectively. Then we color the remaining at most $0.14n$ vertices one by one. Choose one vertex u.a.r. among all the uncolored vertices which have the largest number of edges connecting to the colored vertices, and then color the vertex oppositely to the majority color of its colored neighbors. If the selected vertex has equal number of neighbors of different colors, randomly color this vertex. Since the graph is connected, this coloring procedure will not be terminated until all the vertices are colored. Since coloring one vertex brings at most one edge with both ends inside one vertex subset of the same color, this coloring procedure produces a large cut with cut size at least $1.5n-0.14n=1.36n$, which gives Theorem \ref{cgraph_cut}.

\section{Conclusions and further questions}\label{section:OpenProblems}
There are several questions which remain unanswered after our work. First it would be nice to tighten the result and obtain matching upper and lower bounds on the coefficient of $\sqrt{c}$ in the upper and lower bounds on the Max-Cut value.
For that matter we do not even know whether this quantity is well defined and thus leave it as a challenge to first establish the existence
of the limit
\begin{align}\label{eq:MaxCutLimitSecond}
x^*=\lim_{c\rightarrow\infty}{\mathcal{MC}(c)-c/2\over \sqrt{c}}
\end{align}
and second, identifying the value of $x^*$. It is worth noting that the method that was introduced recently to address the existence of such limits in similar contexts,
namely the interpolation method~\cite{BayatiGamarnikTetali}, and which was used to make the quantity $\mathcal{MC}(c)$ a well-defined value, does not seem to work here.
Thus our first open question is:
\begin{OpenProblem}
\label{openproblem1}
Establish the existence of the limit (\ref{eq:MaxCutLimitSecond}) and identify the value of this limit.
\end{OpenProblem}
{\remark Dembo, Montanari and Sen \cite{Dembo2015} resolved this question positively and computed the limit
(\ref{eq:MaxCutLimitSecond}). The limit was shown to be related to the ground state of the Sherrington-Kirkpatrick model.}

Our second group of questions relates to the concept of
\emph{i.i.d. factors} which appear in the context
of theory of converging sparse graphs~\cite{HatamiLovaszSzegedy},\cite{lyons2011perfect},\cite{gamarnik2014limits},\cite{rahman2014local},\cite{csoka2014invariant}.
The concept appears also under name \emph{coding invariant processes} in Open Problem 2.0 in~\cite{Aldous:FavoriteProblems}.
We do not formally define here i.i.d. factors as it falls somewhat out of the scope of the paper, and instead refer the reader to the literature above. One of the outstanding
questions in this area is identifying the largest density obtainable on infinite trees with a fixed degree distribution, for example a regular (Kelly) tree.
It was shown in~\cite{gamarnik2014limits} and later in~\cite{rahman2014local} that the clustering property provides upper bound on the density of i.i.d. factors.
This approach applies to the case of Max-Cut value as well. Specifically, let $\T_{\Pois}$ denote a (finite or infinite)
tree obtained as a Galton-Watson process with Poisson off-spring distribution with parameter $c$.
As an implication of the upper bound part of our main result we obtain
\begin{coro}\label{coro:MaxCutFactorIID}
The largest Max-Cut density on $\T_{\Pois}$ obtainable as a factor of i.i.d. is at most $\mathcal{MC}(c/2)$.
\end{coro}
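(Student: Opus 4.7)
The proof I would give follows the standard local-to-global reduction for factor-of-i.i.d.\ (f.i.i.d.) processes on Benjamini-Schramm limits of sparse random graphs, in the spirit of the arguments in~\cite{gamarnik2014limits,rahman2014local}. The plan is to show that any f.i.i.d.\ cut on $\T_{\Pois}$ with per-vertex density $d$ can be realized on $\G(n,c/n)$ as a labeling whose cut size is at least $(d-o(1))n$ with high probability. Since $\G(n,c/n)$ and $\G(n,\lfloor cn/2\rfloor)$ are asymptotically equivalent in terms of the normalized Max-Cut (as already noted at the end of Section~\ref{subsection:Our Contribution} following Theorem~\ref{main_theorem}), this yields $d \leq \mathcal{MC}(c/2)$, and then taking the supremum over all f.i.i.d.\ cuts gives the corollary.

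Concretely, I would formalize an f.i.i.d.\ cut as a measurable, automorphism-equivariant map $\Psi$ from i.i.d.\ uniform $[0,1]$ labels on $V(\T_{\Pois})$ to $\{-1,+1\}^{V(\T_{\Pois})}$, and define its Max-Cut density via the mass-transport principle by
\begin{align*}
d(\Psi) = \tfrac{1}{2}\,\E\!\left[\sum_{u\sim \rho}\mathbf{1}\{\Psi(\rho)\neq \Psi(u)\}\right],
\end{align*}
where $\rho$ is the root. By a standard approximation of measurable factors by local ones, I replace $\Psi$ with an $r$-local factor $\Psi_r$ whose value at $v$ depends only on the i.i.d.\ labels in the depth-$r$ ball $B_r(v)$, satisfying $d(\Psi_r)\to d(\Psi)$ as $r\to\infty$.

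Then I would transfer $\Psi_r$ to $\G(n,c/n)$: place i.i.d.\ uniform labels at the vertices and evaluate $\Psi_r$ at each $v$ using the induced subgraph on $B_r(v)$, applying an arbitrary convention on the rare event that $B_r(v)$ is not a tree. By local weak convergence of $\G(n,c/n)$ to $\T_{\Pois}$, the expected cut size of the resulting labeling equals $(d(\Psi_r)+o(1))n$. Since a change in a single i.i.d.\ label affects only the $r$-ball around its vertex, the Azuma--Hoeffding bounded-difference inequality yields concentration of the cut size about its mean, hence the existence of a deterministic labeling of $\G(n,c/n)$ with cut size at least $(d(\Psi_r)-o(1))n$. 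Chaining $d(\Psi)-o_r(1) \le d(\Psi_r) \le MC(\G(n,c/n))/n + o(1) \to \mathcal{MC}(c/2)$ and taking the supremum over $\Psi$ finishes the argument.

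The only real obstacle is technical bookkeeping: checking that the contributions of the vanishingly-unlikely event $\{B_r(v)$ is not a tree$\}$ are negligible (immediate from the boundedness of $\Psi_r$), and that the transfer between $\G(n,c/n)$ and $\G(n,\lfloor cn/2\rfloor)$ preserves the Max-Cut density up to $o_c(\sqrt c)$. Both are routine once local weak convergence is invoked, and no new analytic idea beyond those already developed in the paper is needed.
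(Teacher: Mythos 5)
Your proposal is correct and is essentially the argument the paper intends: the paper offers no details for this corollary beyond deferring to the local simulation argument of~\cite{gamarnik2014limits}, which is exactly your route of defining the density at the root, approximating the measurable factor by $r$-local factors, and transferring them to $\G(n,c/n)$ (local weak limit $\T_{\Pois}$ with parameter $c$, hence Max-Cut value $\mathcal{MC}(c/2)$). One small caveat: the Azuma bounded-difference step is shaky as stated, since changing one label can affect an $r$-ball whose size in $\G(n,c/n)$ is not uniformly bounded, but the step is also unnecessary --- the transferred cut is pointwise at most $MC(\G(n,c/n))$ and $\E[MC(\G(n,c/n))]/n\to\mathcal{MC}(c/2)$, so comparing expectations already yields $d(\Psi_r)\le \mathcal{MC}(c/2)+o(1)$ and then $d(\Psi)\le\mathcal{MC}(c/2)$.
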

Here the argument of $\mathcal{MC}$ is $c/2$ instead of $c$ is due to the fact that the average degree in $\Gncn$ graph is $c/2$.
The proof of this result follows from the argument very similar to the one found in~\cite{gamarnik2014limits}. Nevertheless, since the clustering property
is not yet established for the Max-Cut problem it is not clear whether $\mathcal{MC}(c/2)$ is achievable as a factor of i.i.d.
Our last open problem concerns this question.
\begin{OpenProblem}
Determine whether the value $\mathcal{MC}(c/2)$ is achievable as factor of i.i.d. process.
\end{OpenProblem}

\bibliographystyle{amsalpha}
\bibliography{bibliography}

\end{document}